\newcommand{\la}{\lambda}
\newcommand{\ga}{\gamma}
\theoremstyle{plain}
\numberwithin{equation}{section}
\newtheorem{thm}{Theorem}[section]
\newtheorem{lem}[thm]{Lemma}
\newtheorem{prop}[thm]{Proposition}
\theoremstyle{definition}
\newtheorem{ip}[thm]{Inverse Problem}
\theoremstyle{remark}
\newtheorem{remark}[thm]{Remark}
\begin{document}
\begin{center}
{\large\bf Local solvability and stability for the inverse Sturm-Liouville \\[0.2cm] problem with polynomials in the boundary conditions}
\\[0.2cm]
{\bf Egor E. Chitorkin*, Natalia P. Bondarenko} \\[0.2cm]
\end{center}

\vspace{0.5cm}

{\bf Abstract.}  In this paper, we for the first time prove local solvability and stability of the inverse Sturm-Liouville problem with complex-valued singular potential and with polynomials of the spectral parameter in the boundary conditions. The proof method is constructive. It is based on the reduction of the inverse problem to a linear equation in the Banach space of bounded infinite sequences. We prove that, under a small perturbation of the spectral data, the main equation of the inverse problem remains uniquely solvable. Furthermore, we derive new reconstruction formulas for obtaining the problem coefficients from the solution of the main equation and get stability estimates for the recovered coefficients. 

\medskip

{\bf Keywords:} inverse spectral problems; Sturm-Liouville operator; polynomials in the boundary conditions; singular potential; local solvability; stability.

\medskip

{\bf AMS Mathematics Subject Classification (2020):} 34A55 34B07 34B09 34B24 34L40    

\vspace{1cm}

\section{Introduction} \label{sec:intro}

This paper deals with the Sturm-Liouville problem
\begin{gather} \label{eqv}
-y'' + q(x) y = \lambda y, \quad x \in (0, \pi), \\ \label{bc}
y^{[1]}(0) = 0, \quad r_1(\la)y^{[1]}(\pi) + r_2(\la)y(\pi) = 0,
\end{gather}
where $q(x)$ is a complex-valued distribution potential of the class $W_2^{-1}(0,\pi)$, $\la$ is the spectral parameter, $r_1(\la)$ and $r_2(\la)$ are relatively prime polynomials, $y^{[1]} := y' - \sigma(x) y$ is the so-called quasi-derivative, $\sigma(x)$ is an antiderivative of $q(x)$, that is, $q = \sigma'$ in the sense of generalized functions, $\sigma \in L_2(0,\pi)$. We understand equation \eqref{eqv} in the following equivalent sense:
\begin{equation} \label{eqsi}
 -(y^{[1]})' - \sigma(x)y^{[1]} - \sigma^2(x)y = \la y, \quad x \in (0,\pi),
\end{equation}
and consider its solutions in the class $y, y^{[1]} \in AC[0,\pi]$.

We study the inverse problem that consists in the recovery of $\sigma(x)$, $r_1(\lambda)$, and $r_2(\lambda)$ from the spectral data. This paper aims to prove local solvability and stability of the inverse problem.
To the best of the authors' knowledge, these issues have not been investigated before for the Sturm-Liouville problems containing the spectral parameter in the boundary conditions.


Inverse problems of spectral analysis consist in the recovery of a given differential operator (e.g. the Sturm-Liouville operator) from some information about its spectrum. Such kind of problems have applications in quantum mechanics, chemistry, electronics, and other branches of science and engineering. Inverse spectral theory for the Sturm-Liouville operators with \textit{constant} coefficients in the boundary conditions has been developed fairly completely (see the monographs \cite{Mar77, Lev84, FY01, Krav20}). There is also a number of studies concerning eigenvalue problems with \textit{polynomial} dependence on the spectral parameter in the boundary conditions. Such kind of eigenvalue problems arise in physical applications, e.g., in mechanical engineering, diffusion, and electric circuit problems (see \cite{Ful77, Ful80} and references therein). 

\textit{Inverse} Sturm-Liouville problems with polynomial dependence on spectral parameter in the boundary conditions have been studied in \cite{Chu01, BindBr021, BindBr022, BindBr04, ChFr, FrYu, FrYu12, Wang12, YangXu15, YangWei18, Gul19, Gul20-ann, Gul20-ams, Gul23, Chit} and other papers. Some part of them deal with self-adjoint problems containing rational Herglotz-Nevanlinna functions of the spectral parameter in the boundary conditions (see, e.g., \cite{BindBr021, BindBr022, YangWei18, Gul19, Gul20-ann, Gul20-ams, Gul23}) that can be reduced to the problem with polynomial dependence on the spectral parameter. In contrast to the mentioned studies, this paper deals with the general non-self-adjoint problem.

A constructive solution of the inverse Sturm-Liouville problem on a finite interval with the polynomial boundary conditions in the general non-self-adjoint case has been obtained by Freiling and Yurko \cite{FrYu} by using the method of spectral mappings. We note that those results are concerned only with regular potentials of the class $L_2(0,1)$.

In recent years, spectral analysis of differential operators with singular coefficients, which are the so-called distributional coefficients, has attracted much attention of mathematicians (see \cite{Gul19, SavShkal03, Hry03, Hry04, Sav05, Sav10, FrIgYu, Dj, BondTamkang, ChitBond}). Some properties of spectrum and solutions of differential equations with singular coefficients were studied in such papers as \cite{SavShkal03, Hry04, Sav05, Sav10}. The most complete results in the inverse problem theory have been obtained for the Sturm-Liouville operators with singular potentials (see \cite{Hry03, Hry04, Sav05, Sav10, FrIgYu, Dj, BondTamkang}). In particular, Hryniv and Mykytyuk \cite{Hry03, Hry04} generalized a number of classical results to the case of $W_2^{-1}$-potentials, by using the transformation operator method. Savchuk and Shkalikov \cite{Sav10} proved the uniform stability of the inverse Sturm-Liouville problems with potentials of $W_2^{\alpha}$, where $\alpha > -1$.  There were also studies of inverse problems with singular potentials for the quadratic Sturm-Liouville pencils (see \cite{Hry12, Pr13, BondGaidel, Kuz23}) and for the matrix Sturm-Liouville operators (see \cite{Myk09, Eck15, BondAMP21}).

The method of spectral mappings has been transferred to the Sturm-Liouville operators with potentials of the class $W_2^{-1}$ in \cite{FrIgYu, BondTamkang, ChitBond, Yur02}. In particular, in \cite{ChitBond}, we have obtained a constructive solution of an inverse problem for the Sturm-Liouville equation \eqref{eqv} with singular potential and the polynomial boundary conditions \eqref{bc}. Namely, the inverse problem has been reduced to a linear equation in the Banach space of bounded infinite sequences and reconstruction formulas for the coefficients $\sigma(x)$, $r_1(\la)$, and $r_2(\la)$ have been derived. The developed algorithm has opened a perspective of studying solvability and stability for the inverse problem.


Local solvability and stability of inverse problems for various classes of the Sturm-Liouville operators were investigated in \cite{FY01, Borg46, McL88, Hry03, HK11, BondButTr17, BK19, Bond20, GuoMa23, Yur02} and other studies.
Local solvability is an important property of inverse problems, especially in the cases when it is difficult to prove the global solvability.
Stability is significant for justification of numerical methods. To the best of the authors' knowledge, stability of inverse problems for differential operators with polynomial dependence on the spectral parameter in the boundary conditions has not been investigated before. Regarding solvability of such inverse problems, there are the results of Guliyev \cite{Gul19, Gul20-ann, Gul20-ams} for the self-adjoint case. However, for the non-self-adjoint case, there were no studies concerning solvability of inverse problems with polynomials in the boundary conditions. Thus, the present paper aims to fill this gap.

Let us formulate the main results of this paper.
Denote by $L = L(\sigma, r_1, r_2)$ the boundary value problem \eqref{eqv}-\eqref{bc}. Obviously, the polynomials of the boundary conditions can be represented in the form
$$
r_1(\la) = 	\sum \limits_{n=0}^{M_1} c_n \la^n, \quad  r_2(\la) = \sum \limits_{n=0}^{M_2} d_n \la^n, \quad M_1, M_2 \ge 0.
$$

We will write $(r_1, r_2)\in\mathcal R$ if $M_1= M_2$ and $c_{M_1} = 1$. Note that the higher coefficients $d_{M_2}$, $d_{M_1}$, \dots  of $r_2(\lambda)$ can be equal zero. Throughout this paper, we assume that $(r_1, r_2)\in\mathcal R$. The opposite case $M_2 < M_1$ can be considered similarly.

It has been proved in \cite{ChitBond} that the spectrum of the boundary value problem $L$ is a countable set of eigenvalues. They can be numbered as $\{\la_n\}_{n \ge 1}$ according to their asymptotic behavior:
\begin{gather}\label{la_asymp}
\rho_n = \sqrt{\la_n} = n - M_1 - 1 + \varkappa_n, \quad \{ \varkappa_n\} \in l_2.
\end{gather}

In this paper, we assume that the eigenvalues are simple. Then, the Weyl function $M(\la)$, which will be defined in Section~\ref{sec:prelim}, has simple poles at $\la = \la_n$, $n \ge 1$, and the weight numbers are defined as $\alpha_n = \mathop{Res}_{\la = \la_n} M(\la)$, $n \ge 1$. They fulfill the asymptotics (see \cite{ChitBond}):
\begin{gather}\label{alpha_asymp}
\alpha_n = \dfrac{2}{\pi} + \varkappa_n^0, \quad \{ \varkappa_n^0\} \in l_2.
\end{gather}

We call the numbers $\{\la_n, \alpha_n\}_{n \ge 1}$ \textit{the spectral data} of the problem $L$ and study the following inverse problem.

\begin{ip}\label{ip:main3}
	Given the spectral data $\{ \la_n, \alpha_n \}_{n \ge 1}$, find $\sigma(x)$, $r_1(\la)$, and $r_2(\la)$.
\end{ip}

Along with $L$, consider another boundary value problem $\tilde L = L(\tilde \sigma, \tilde r_1, \tilde r_2)$ of the same form but with different coefficients. We agree that, if a symbol $\ga$ denotes an object related to $L$, then the symbol $\tilde \ga$ with tilde will denote the analogous object related to $\tilde L$. Our main result is the following theorem on local solvability and stability of Inverse Problem~\ref{ip:main3}.

\begin{thm} \label{stability_thm}
	Let $\tilde L = L(\tilde \sigma, \tilde r_1, \tilde r_2)$ be a fixed boundary value problem of form \eqref{eqv}-\eqref{bc} with $\tilde \sigma \in L_2(0,\pi)$, $(\tilde r_1, \tilde r_2) \in \mathcal R$, and simple eigenvalues $\{ \tilde \la_n\}_{n \ge 1}$. 
	Then, there exists $\delta_0 > 0$, depending on problem $\tilde L$, such that, for any complex numbers $\{\la_n, \alpha_n\}_{n \ge 1}$ satisfying the condition
	\begin{equation} \label{estde}
	\delta := \bigg(\sum\limits_{n=1}^\infty(|\tilde \rho_n - \rho_n|+|\tilde\alpha_n - \alpha_n|)^2\bigg)^{\frac{1}{2}} \le \delta_0,
	\end{equation}
	where $\rho_n := \sqrt{\la_n}$, $\tilde \rho_n := \sqrt{\tilde \la_n}$,
	there exist a complex-valued function $\sigma(x) \in L_2(0, \pi)$ and polynomials $(r_1, r_2)\in\mathcal R$, which are the solution of Inverse Problem~\ref{ip:main3} for data $\{\la_n, \alpha_n\}_{n \ge 1}$. Moreover,
	\begin{equation} \label{stab}
	\|\sigma - \tilde\sigma\|_{L_2} \le C\delta, \quad |r_1(\la) - \tilde r_1(\la)| \le C\delta, \quad |r_2(\la) - \tilde r_2(\la)| \le C\delta
	\end{equation}
	for $\la$ on compact sets, where the constant $C$ depends only on $\tilde L$, $\delta_0$, and on a compact set in the estimates for $r_1(\la)$ and $r_2(\la)$.
\end{thm}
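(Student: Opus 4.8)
The plan is to follow the method of spectral mappings in the form developed in \cite{ChitBond}, combined with a Neumann-series perturbation argument for the main equation of the inverse problem. First I would recall from \cite{ChitBond} the reduction of Inverse Problem~\ref{ip:main3} to a linear equation: for each $x \in [0,\pi]$, the sequence $\psi(x)$ built from the solutions $\varphi(x, \la_n)$ of \eqref{eqsi} associated with the unknown problem $L$ satisfies the main equation
\begin{equation} \label{plan:main}
(I + \tilde R(x))\,\psi(x) = \tilde\psi(x)
\end{equation}
in the Banach space $m$ of bounded infinite sequences, where $\tilde\psi(x)$ and the bounded operator $\tilde R(x)$ are constructed from the model problem $\tilde L$ and from the numbers $\{\la_n, \alpha_n\}_{n\ge1}$. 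When $\rho_n = \tilde\rho_n$ and $\alpha_n = \tilde\alpha_n$ for all $n$, the operator $\tilde R(x)$ reduces to a ``diagonal'' operator $\tilde R^0(x)$, and the unique solvability of \eqref{plan:main} for the pair $(\tilde L, \tilde L)$, which is part of the constructive solution in \cite{ChitBond}, means precisely that $I + \tilde R^0(x)$ is invertible in $m$ for every $x$; moreover $\tilde R^0(x)\tilde\psi(x) = 0$. Since $x \mapsto \tilde R^0(x)$ is continuous in the operator norm on the compact segment $[0,\pi]$, the norms $\|(I+\tilde R^0(x))^{-1}\|$ are bounded by a constant $C_0 = C_0(\tilde L)$.

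The key quantitative step is the estimate
\begin{equation} \label{plan:Rest}
\sup_{x\in[0,\pi]} \|\tilde R(x) - \tilde R^0(x)\|_{m \to m} \le C\delta,
\end{equation}
which I would obtain from the asymptotic formulas \eqref{la_asymp}, \eqref{alpha_asymp} for both problems together with the explicit form of the entries of $\tilde R(x)$: these involve the model solutions $\tilde\varphi$, the weights $\alpha_n,\tilde\alpha_n$, and difference quotients in $\rho_n,\tilde\rho_n$, and the apparently singular diagonal-type terms are handled by applying the mean value theorem to $\rho \mapsto \tilde\varphi(x,\rho^2)$ and by Cauchy--Schwarz over the $l_2$-sequences controlled by \eqref{estde}. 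Choosing $\delta_0 = \delta_0(\tilde L)$ with $C\delta_0 \le (2C_0)^{-1}$, a Neumann-series argument then gives that $I + \tilde R(x)$ is boundedly invertible for all $x\in[0,\pi]$, uniformly, with $\|(I+\tilde R(x))^{-1}\| \le 2C_0$; hence \eqref{plan:main} has a unique solution $\psi(x)\in m$ for every $x$, and the identity $(I+\tilde R)^{-1}-(I+\tilde R^0)^{-1} = -(I+\tilde R)^{-1}(\tilde R-\tilde R^0)(I+\tilde R^0)^{-1}$ applied to $\tilde\psi(x)$ yields $\sup_x\|\psi(x)-\tilde\psi(x)\|_m \le C\delta$. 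Differentiating \eqref{plan:main} formally in $x$ and checking that the resulting equation is again of the form $(I+\tilde R(x))\chi(x) = (\text{known, }O(\delta)\text{-stable})$ gives the regularity of $x\mapsto\psi(x)$ needed below and the corresponding stability bound for the $x$-derivative.

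Finally I would substitute $\psi(x)$ into the reconstruction formulas of \cite{ChitBond} to define $\sigma(x)$ and the coefficients of $r_1(\la)$, $r_2(\la)$. The formula for $\sigma$ represents $\sigma - \tilde\sigma$ as a series (in the components of $\psi(x)-\tilde\psi(x)$ and the spectral-data differences) converging in $L_2(0,\pi)$, so the estimate $\|\sigma-\tilde\sigma\|_{L_2}\le C\delta$ in \eqref{stab} follows from the stability of $\psi$ and $l_2$-summation. The polynomials $r_1, r_2$ are determined by finitely many numbers, recovered from a finite linear system built from the data and from limiting combinations of $\psi$; its matrix is $O(\delta)$-close to the corresponding nondegenerate matrix for $\tilde L$, so for $\delta_0$ small the system is solvable, $(r_1,r_2)\in\mathcal R$ with the same degree $M_1=\tilde M_1$, and each coefficient lies within $C\delta$ of the model one, which gives the remaining bounds in \eqref{stab} on compact $\la$-sets. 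It then remains to verify that the triple $(\sigma, r_1, r_2)$ so constructed actually solves Inverse Problem~\ref{ip:main3} for $\{\la_n,\alpha_n\}$: using the main equation one shows that the function $\varphi(x,\la)$ assembled from $\psi$ satisfies \eqref{eqsi} with potential $\sigma'$ and the boundary conditions \eqref{bc} with $r_1,r_2$, and that $\{\la_n,\alpha_n\}$ are indeed its eigenvalues and weight numbers. I expect this last verification, together with the uniform operator estimate \eqref{plan:Rest}, to be the main technical obstacle; it is also the place where the stability of the degree $M_1$ (specific to the presence of polynomials in the boundary conditions) is essential. The constant $C$ accumulated along the way depends only on $\tilde L$, on $\delta_0$, and, for the estimates on $r_1,r_2$, on the chosen compact set.
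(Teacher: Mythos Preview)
Your overall architecture---reduce to the main equation in $m$, prove invertibility via a Neumann-series/small-perturbation argument, then plug the solution into reconstruction formulas---matches the paper. Two points deserve correction or sharpening, though.

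First, a minor inaccuracy: in the paper's normalization the operator $\tilde H(x)$ (your $\tilde R(x)$) actually \emph{vanishes} when $\rho_n=\tilde\rho_n$, $\alpha_n=\tilde\alpha_n$ for all $n$; the $T$-transform is designed so that every column carries a factor $\delta_k$. So there is no nontrivial ``diagonal'' $\tilde R^0(x)$ to invert---one simply proves $\|\tilde H(x)\|_{m\to m}\le C\delta$ and inverts $E+\tilde H(x)$ directly. This does not affect your conclusion, only the framing.

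Second, and more substantively, your treatment of $r_1,r_2$ diverges from the paper and is the place where your plan is thinnest. The reconstruction formulas in \cite{ChitBond} are for the \emph{zero} model problem; here one needs (and the paper derives) new formulas \eqref{r1_series}--\eqref{r2_series} relative to an arbitrary $\tilde L$. These are infinite products and series, not a finite linear system. For $r_1(\la)$ the series can be estimated directly and shown to converge with $|r_1-\tilde r_1|\le C\delta$. For $r_2(\la)$ the series contains the quasi-derivatives $\varphi^{[1]}_{kj}(\pi)$, for which the bare main-equation estimates on $\psi$ are \emph{not} sharp enough to prove convergence directly. The paper circumvents this by an approximation argument: truncate the data to $\{\la_n^K,\alpha_n^K\}$, build finite problems $L^K=L(\sigma^K,r_1^K,r_2^K)$, check by direct (finite) computation that $(r_1^K,r_2^K)\in\mathcal R$ and that $L^K$ has spectral data $\{\la_n^K,\alpha_n^K\}$, and then prove $\{r_2^K\}$ is Cauchy (using that $\sigma^K\to\sigma$ in $L_2$ forces the transformation-operator kernels, hence $\varphi^{K[1]}_{kj}(\pi)$, to converge). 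Passing to the limit gives $r_2$, its polynomial nature, and the stability bound simultaneously; the same limit argument is what verifies that $(\sigma,r_1,r_2)$ really has spectral data $\{\la_n,\alpha_n\}$.

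Your ``finite linear system'' route for the polynomial coefficients is a genuinely different idea and might be made to work (once $\sigma$ is known, solve the ODE to get $\varphi(\pi,\la_n),\varphi^{[1]}(\pi,\la_n)$ and impose $\Delta(\la_n)=0$ together with the residue relations), but as written it is underspecified: you would need to exhibit the system, prove its matrix is nondegenerate for $\tilde L$, and show the entries depend on $\sigma$ with the right continuity. None of this is in your sketch, and it is exactly here---not in the operator estimate \eqref{plan:Rest}---that the real work lies.
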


The proof of Theorem~\ref{stability_thm} is based on a constructive solution of the inverse Sturm-Liouville problem with polynomials in the boundary conditions with a non-zero model problem and ideas from \cite{BondTamkang, ChitBond} concerning with singular potentials. We consider the problem with polynomials only in the right boundary condition similarly to \cite{ChitBond} and derive new reconstruction formulas for an arbitrary model problem (see Theorem~\ref{mainthm}). So, our formulas generalize the reconstruction formulas from \cite{ChitBond}. In order to prove the correctness of the reconstruction formulas, we use the approximation approach. We show that the finite approximations imply polynomials of the required degrees and after that pass to limits. The most important step is the proof of the convergence for the series and infinite products in the reconstruction formulas. For $\sigma(x)$ and $r_1(\la)$, we use standard estimates. But for $r_2(\la)$ we have to apply the Cauchy criterion for the finite approximations. The described method allows us to construct the problem $L = L(\sigma, r_1, r_2)$ from arbitrary data $\{ \la_n, \alpha_n\}_{n \ge 1}$ satisfying the hypothesis of Theorem~\ref{stability_thm}, to prove that $\{ \la_n, \alpha_n\}_{n \ge 1}$ are the spectral data of $L$, and in parallel to deduce the estimates \eqref{stab}.

Note that, in general, the problem $L$ can have a finite number of multiple eigenvalues. Then, the spectral data have to be defined in a slightly different way (see \cite{ChitBond}). Furthermore, there is a challenge to study local solvability and stability of the inverse problem taking the possible splitting of multiple eigenvalues into account. The authors plan to investigate these issues in the future by using the ideas of the studies \cite{BK19, Bond20} and by developing the methods of this paper. In Section~\ref{sec:mult}, we consider the special case when multiple eigenvalues are possible but they are not perturbed. Our reconstruction formulas and Theorem~\ref{stability_thm} can be easily generalized to this case, which is important for future research.

The paper is organized as follows. Section~\ref{sec:prelim} contains preliminaries. In Section~\ref{sec:rec}, we derive the reconstruction formulas for the coefficients of the boundary value problem $L$. In Section~\ref{sec:maineq}, we provide the main equation, which has been derived in \cite{ChitBond}, and get some useful estimates for the operator in this equation. In Section~\ref{sec:solvstab}, we prove the main theorem about local solvability and stability of the inverse problem. In Section~\ref{sec:mult}, the special case for multiple eigenvalues is considered.

Throughout the paper, we use the same symbol $C$ for various positive constants independent of $x$, $\la$, $n$, etc.

\section{Preliminaries} \label{sec:prelim}
In this section, we consider some important solutions of equation \eqref{eqv} and the Weyl function of the spectral problem. Moreover, we obtain some its useful properties.

Denote by $\varphi(x, \la)$ and $\psi(x, \la)$ solutions of equation \eqref{eqv} satisfying the initial conditions
$$
\varphi(0, \la) = 1, \quad \varphi^{[1]}(0, \la) = 0, \quad \psi(\pi, \la) = r_1(\la), \quad \psi^{[1]}(\pi, \la) = -r_2(\la).
$$

Clearly, for each fixed $x \in [0,\pi]$, the functions $\varphi(x, \la)$, $\varphi^{[1]}(x, \la)$, $\psi(x, \la)$, and $\psi^{[1]}(x, \la)$ are entire in $\la$. The following proposition provides the useful integral representations of $\varphi(x, \la)$ and $\varphi^{[1]}(x, \la)$.

\begin{prop}[\cite{BondTamkang}] \label{transform_varphi}
The functions $\varphi(x, \la)$ and $\varphi^{[1]}(x, \la)$ can be represented as follows:
\begin{gather*}
    \varphi(x, \la) = \cos\rho x + \int_0^{x}  \mathcal{K}_1(x, t) \cos\rho t \, dt, \\
    \varphi^{[1]}(x, \la) = -\rho\sin\rho x + \rho\int_0^{x} \mathcal{K}_2(x, t) \sin\rho t \, dt + \mathcal{C}(x),
\end{gather*}
where the functions $\mathcal{K}_j(x, t)$, $j = 1, 2$, are square integrable in the region $\{(x, t): 0 < t < x < \pi\}$ and fulfill the conditions $\mathcal K(x, .) \in L_2(0,x)$ for each fixed $x \in (0,\pi]$ and $\sup_x \| \mathcal K(x, .) \|_{L_2(0,x)} < \infty$, the function $\mathcal{C}(x)$ is continuous on $[0, \pi]$. \end{prop}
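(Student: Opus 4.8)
The plan is to derive both representations simultaneously from the first-order system equivalent to \eqref{eqsi}. Setting $Y(x,\la) := \bigl(\varphi(x,\la),\,\varphi^{[1]}(x,\la)\bigr)^{\top}$, the relations $\varphi' = \varphi^{[1]} + \sigma\varphi$ and $(\varphi^{[1]})' = -(\la+\sigma^2)\varphi - \sigma\varphi^{[1]}$ together with the initial conditions $\varphi(0,\la)=1$, $\varphi^{[1]}(0,\la)=0$ turn the problem into
\[
Y'(x,\la) = \bigl(A_0(\la) + B(x)\bigr)\,Y(x,\la), \qquad Y(0,\la) = (1,0)^{\top},
\]
where $A_0(\la)$ is the constant matrix of the unperturbed problem ($\sigma\equiv 0$), whose fundamental matrix $E(x,\la)$ is built from $\cos\rho x$, $\rho^{-1}\sin\rho x$ and $-\rho\sin\rho x$, and $B(x)$ is the $\la$-independent matrix carrying the singular coefficients $\sigma$ and $\sigma^2$ (entries $\sigma$, $0$ in the top row and $-\sigma^2$, $-\sigma$ in the bottom row). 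Variation of parameters yields the Volterra equation $Y(x,\la) = E(x,\la)(1,0)^{\top} + \int_0^x E(x-t,\la)B(t)Y(t,\la)\,dt$, which I solve by successive approximations $Y = \sum_{n\ge 0}Y_n$, with $Y_0(x,\la) = \bigl(\cos\rho x,\,-\rho\sin\rho x\bigr)^{\top}$ and $Y_{n+1}(x,\la) = \int_0^x E(x-t,\la)B(t)Y_n(t,\la)\,dt$.

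Next I would bring each iterate into the required form. $Y_n$ is an $n$-fold integral over the simplex $0 < t_n < \dots < t_1 < x$ of a product of $n$ entries of $B$ (hence $n$ factors $\sigma$, one of them replaced by $\sigma^2$ at each ``lower-left'' step) against a product of trigonometric functions of $\rho$ whose arguments are signed combinations of $x$ and the $t_j$. The product-to-sum identities $\cos a\cos b = \tfrac12(\cos(a-b)+\cos(a+b))$, $\sin a\cos b = \tfrac12(\sin(a+b)+\sin(a-b))$ and their analogues collapse each such product into a sum of terms $\rho^{m}\cos\rho\ell$ or $\rho^{m}\sin\rho\ell$ with $m\in\{-1,0,1\}$; folding $\ell$ onto $[0,x]$ by a change of the innermost variable (using parity of $\cos$ and $\sin$) and using $\rho^{-1}\sin\rho x = \int_0^x\cos\rho t\,dt$ and $\cos\rho t = 1 - \rho\int_0^t\sin\rho u\,du$ to shuttle terms between the two forms, I bring the first component of $Y_n$ to $\int_0^x P_n(x,t)\cos\rho t\,dt$ and the second to $\rho\int_0^x Q_n(x,t)\sin\rho t\,dt + R_n(x)$, where $R_n(x)$ absorbs the $\la$-free pieces. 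Summing, $\mathcal K_1 = \sum_{n\ge1}P_n$, $\mathcal K_2 = \sum_{n\ge1}Q_n$, $\mathcal C = \sum_{n\ge1}R_n$, while $Y_0$ supplies the leading $\cos\rho x$ and $-\rho\sin\rho x$.

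For convergence I would estimate the simplex integrals by the Cauchy--Schwarz inequality: a factor $\sigma$ integrated against a bounded trigonometric factor contributes at most $\|\sigma\|_{L_2}\sqrt{x}$, a factor $\sigma^2$ at most $\|\sigma\|_{L_2}^2$, and the ordered domain of integration supplies $1/n!$; hence $\|P_n(x,\cdot)\|_{L_2(0,x)}$, $\|Q_n(x,\cdot)\|_{L_2(0,x)}$ and $\max_{[0,\pi]}|R_n|$ are bounded by $C^n/n!$ uniformly in $x\in[0,\pi]$, with $C$ depending only on $\|\sigma\|_{L_2}$. Thus the three series converge, $\mathcal K_j(x,\cdot)\in L_2(0,x)$ with $\sup_x\|\mathcal K_j(x,\cdot)\|_{L_2(0,x)} < \infty$, and $\mathcal C$ is continuous on $[0,\pi]$ as a uniform limit of continuous functions (each $R_n$ being essentially an indefinite integral of an $L_1$ function); the series for $Y$ itself converges for every $\la$, which yields the stated identities.

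The main obstacle is the $L_2$-control of the kernels. Since $\sigma$ lies only in $L_2(0,\pi)$, a product $\sigma(t_1)\cdots\sigma(t_n)$ — all the more one carrying a factor $\sigma^2$ — is merely integrable, not square-integrable, on the simplex, so a crude bound fails; the resolution is to verify by induction on $n$ that, after the trigonometric reduction, the variables $t_2,\dots,t_n$ are always integrated against \emph{bounded} trigonometric factors, so that only the outermost variable survives as the $L_2$-argument $t$, and that this surviving variable never carries the factor $\sigma^2$. This bookkeeping is exactly what produces the asymmetry between the two formulas — pure $\cos\rho t$ for $\varphi$ versus $\rho\sin\rho t$ plus an $\la$-independent continuous remainder $\mathcal C(x)$ for $\varphi^{[1]}$. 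An alternative route is to establish the representations first for smooth $\sigma$, where $q\in L_2$ and the classical transformation-operator theory applies, and then pass to the limit $\sigma_k\to\sigma$ in $L_2$ using the same uniform estimates; the representation for $\varphi$ alone is also contained in the work of Hryniv and Mykytyuk \cite{Hry03, Hry04}, whereas $\varphi^{[1]}$ requires a separate argument because $\partial_x\mathcal K_1$ is in general only a distribution. The complete details are given in \cite{BondTamkang}.
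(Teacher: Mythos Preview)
The paper does not prove this proposition at all: it is stated as a quotation from \cite{BondTamkang} and used as a black box throughout. There is therefore nothing in the present paper to compare your argument against, and you yourself note at the end that the complete details are in \cite{BondTamkang}. Your sketch --- reduction to a first-order system, the Volterra integral equation via variation of parameters, iteration, trigonometric reduction of the iterates, and $L_2$-estimates on the simplex --- is the standard route to transformation-operator representations and is indeed essentially the method used in \cite{BondTamkang}; the observation that the second component forces a $\la$-free remainder $\mathcal C(x)$ because $\partial_x\mathcal K_1$ is only distributional is exactly the point. For the purposes of this paper, however, the correct ``proof'' of the proposition is simply the citation.
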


It can be shown that the eigenvalues $\{ \la_n \}_{n \ge 1}$ of the problem\eqref{eqv}-\eqref{bc} coincide with zeros of the characteristic function
\begin{gather}\label{charfun}
\Delta(\la) = -\psi^{[1]}(0, \la) = r_1(\la)\varphi^{[1]}(\pi, \la) + r_2(\la)\varphi(\pi, \la).
\end{gather}

Note that, for each eigenvalue $\la_n$, we have the unique eigenfunction up to a constant factor. On the one hand, the eigenfunction is proportional to $\varphi(x, \la_n)$, on the other hand, to $\psi(x, \la_n)$, so the following expression holds:
\begin{gather}\label{beta}
\psi(x, \la_n) = \beta_n\varphi(x, \la_n),
\end{gather}
where $\beta_n$ is a nonzero constant.

Let us also introduce the Weyl solution $\Phi(x, \la)$ of equation \eqref{eqv} satisfying the boundary conditions
\begin{gather}\label{WeylsolBC}
\Phi^{[1]}(0,  \la) = 1, \quad r_1(\la)\Phi^{[1]}(\pi, \la) + r_2(\la)\Phi(\pi, \la)=0
\end{gather}
and the Weyl function of the problem $L$:
\begin{gather} \label{defM1}
M(\la) = \Phi(0, \la) = -\dfrac{\psi(0,\la)}{\Delta(\la)}.
\end{gather}

The functions $\Phi(x, \la)$, $\Phi^{[1]}(x, \la)$, and $M(\la)$ are meromorphic in $\la$. Moreover, all their singularities are simple poles which coincide with the eigenvalues $\{ \la_n \}_{n \ge 1}$. The Weyl function is uniquely specified by its poles and residues by the formula (see \cite{ChitBond}):
\begin{gather*}
M(\la) = \sum_{n = 1}^{\infty} \frac{\alpha_n}{\la - \la_n}.
\end{gather*}

Consequently, Inverse Problem~\ref{ip:main3} is equivalent to the recovery of $\sigma(x)$, $r_1(\la)$, and $r_2(\la)$ from the Weyl function $M(\la)$.

Introduce the notations:
$$
\langle y(x), z(x) \rangle = y(x)z^{[1]}(x) - y^{[1]}(x)z(x), \quad r^{\langle1\rangle}(\la) = \dfrac{d r(\la)}{d \la}.
$$

It can be shown by direct calculations that, if $y$ and $z$ are solutions of equation \eqref{eqsi} with spectral parameters $\la$ and $\mu$, respectively, then
\begin{equation} \label{wron}
\frac{d}{dx} \langle y(x),  z(x) \rangle = (\la - \mu) y(x) z(x).
\end{equation}

Below we need the following auxiliary lemma.

\begin{lem}\label{lemforz}
	The following relation holds:
	$$
	- r_1^{\langle1\rangle}(\la_{n})\varphi^{[1]}(\pi, \la_{n})-r_2^{\langle1\rangle}(\la_{n})\varphi(\pi,\la_{n}) = \frac{r_1(\la_{n})}{\varphi(\pi,\la_{n})}\Bigg(\int_0^\pi {\varphi^2(x,\la_{n})dx} + \frac{1}{\alpha_{n}}\Bigg), \quad n \ge 1.
	$$
\end{lem}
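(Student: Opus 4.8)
The plan is to prove the identity by differentiating the characteristic function $\Delta(\la)$ at an eigenvalue and combining this with a Wronskian-type computation. First I would recall from \eqref{charfun} that $\Delta(\la) = r_1(\la)\varphi^{[1]}(\pi,\la) + r_2(\la)\varphi(\pi,\la)$ and differentiate with respect to $\la$. At $\la = \la_n$ this yields
$$
\Delta^{\langle 1\rangle}(\la_n) = r_1^{\langle1\rangle}(\la_n)\varphi^{[1]}(\pi,\la_n) + r_2^{\langle1\rangle}(\la_n)\varphi(\pi,\la_n) + r_1(\la_n)\big(\varphi^{[1]}\big)^{\langle1\rangle}(\pi,\la_n) + r_2(\la_n)\varphi^{\langle1\rangle}(\pi,\la_n),
$$
where $\varphi^{\langle1\rangle}$ denotes $\partial_\la\varphi$. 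So the left-hand side of the lemma equals $-\Delta^{\langle1\rangle}(\la_n) + r_1(\la_n)(\varphi^{[1]})^{\langle1\rangle}(\pi,\la_n) + r_2(\la_n)\varphi^{\langle1\rangle}(\pi,\la_n)$.

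Next I would handle the two $\la$-derivative terms via the Wronskian relation \eqref{wron}. The standard trick is to take $y(x) = \varphi(x,\la)$ and $z(x) = \varphi(x,\mu)$, divide \eqref{wron} by $\la-\mu$, and let $\mu\to\la$: since $\langle\varphi(x,\la),\varphi(x,\la)\rangle = 0$ and $\varphi^{[1]}(0,\la) = 0$, integrating from $0$ to $\pi$ gives
$$
\varphi(\pi,\la)\,(\varphi^{[1]})^{\langle1\rangle}(\pi,\la) - \varphi^{[1]}(\pi,\la)\,\varphi^{\langle1\rangle}(\pi,\la) = \int_0^\pi \varphi^2(x,\la)\,dx.
$$
Multiplying the bracketed combination $r_1(\la_n)(\varphi^{[1]})^{\langle1\rangle}(\pi,\la_n) + r_2(\la_n)\varphi^{\langle1\rangle}(\pi,\la_n)$ by $\varphi(\pi,\la_n)/\varphi(\pi,\la_n)$ and using $\Delta(\la_n) = 0$, i.e. $r_2(\la_n)\varphi(\pi,\la_n) = -r_1(\la_n)\varphi^{[1]}(\pi,\la_n)$, should collapse this combination into $\frac{r_1(\la_n)}{\varphi(\pi,\la_n)}\int_0^\pi\varphi^2(x,\la_n)\,dx$ plus a term proportional to $\Delta^{\langle1\rangle}(\la_n)$ that cancels the one above.

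Finally I need to identify the remaining piece with $\frac{r_1(\la_n)}{\varphi(\pi,\la_n)}\cdot\frac{1}{\alpha_n}$. This requires relating $\Delta^{\langle1\rangle}(\la_n)$, the constant $\beta_n$ from \eqref{beta}, and the weight number $\alpha_n = \operatorname{Res}_{\la=\la_n} M(\la)$. Using $M(\la) = -\psi(0,\la)/\Delta(\la)$ and $\psi(x,\la_n) = \beta_n\varphi(x,\la_n)$, the residue computation gives $\alpha_n = -\psi(0,\la_n)/\Delta^{\langle1\rangle}(\la_n) = -\beta_n/\Delta^{\langle1\rangle}(\la_n)$ (since $\varphi(0,\la_n)=1$). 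One also needs the classical relation $\Delta^{\langle1\rangle}(\la_n) = -\beta_n\int_0^\pi\varphi^2(x,\la_n)\,dx + (\text{boundary-polynomial contribution})$, which follows by applying \eqref{wron} to $\psi(x,\la)$ and $\varphi(x,\mu)$ on $(0,\pi)$ and using the boundary conditions defining $\psi$ together with $\psi^{[1]}(0,\la) = -\Delta(\la)$. Assembling all these relations, the $\Delta^{\langle1\rangle}$-terms and $\beta_n$ should drop out, leaving exactly $\frac{r_1(\la_n)}{\varphi(\pi,\la_n)}\big(\int_0^\pi\varphi^2(x,\la_n)\,dx + \frac{1}{\alpha_n}\big)$.

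I expect the main obstacle to be bookkeeping the boundary-polynomial terms carefully: because $r_1,r_2$ depend on $\la$, several $r_j^{\langle1\rangle}$ terms appear at intermediate stages, and one must verify that the combination $-r_1^{\langle1\rangle}\varphi^{[1]} - r_2^{\langle1\rangle}\varphi$ — rather than its negative or some shifted version — is what survives. Getting the signs consistent between $\psi^{[1]}(0,\la) = -\Delta(\la)$, the definition $M = -\psi(0,\la)/\Delta(\la)$, and the Wronskian sign convention in \eqref{wron} is the delicate point; everything else is routine differentiation and integration by parts.
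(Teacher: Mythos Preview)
Your approach is correct and will yield the lemma, but it differs somewhat from the paper's route and is in fact cleaner than you anticipate. The paper applies the Wronskian relation \eqref{wron} to the pair $\psi(x,\la)$, $\varphi(x,\la_n)$, integrates over $(0,\pi)$, divides by $\la-\la_n$, and passes to the limit via L'H\^opital; this produces $\Delta^{\langle1\rangle}(\la_n) - r_1^{\langle1\rangle}(\la_n)\varphi^{[1]}(\pi,\la_n) - r_2^{\langle1\rangle}(\la_n)\varphi(\pi,\la_n)$ on one side and $\beta_n\int_0^\pi\varphi^2$ on the other. You instead differentiate $\Delta(\la) = r_1\varphi^{[1]}(\pi,\la) + r_2\varphi(\pi,\la)$ directly by the product rule and handle the $\la$-derivatives of $\varphi,\varphi^{[1]}$ via the $\varphi$--$\varphi$ Wronskian identity $\varphi(\varphi^{[1]})^{\langle1\rangle} - \varphi^{[1]}\varphi^{\langle1\rangle} = \int_0^\pi\varphi^2$. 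Both arguments hinge on the same three facts: $\Delta(\la_n)=0$, the residue identity $\alpha_n = -\beta_n/\Delta^{\langle1\rangle}(\la_n)$, and $\beta_n = r_1(\la_n)/\varphi(\pi,\la_n)$.

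One remark: your write-up anticipates extra ``boundary-polynomial contributions'' and a spurious $\Delta^{\langle1\rangle}$-term that must cancel, but in fact the substitution $r_2(\la_n)\varphi(\pi,\la_n) = -r_1(\la_n)\varphi^{[1]}(\pi,\la_n)$ turns $r_1(\varphi^{[1]})^{\langle1\rangle} + r_2\varphi^{\langle1\rangle}$ \emph{directly} into $\dfrac{r_1(\la_n)}{\varphi(\pi,\la_n)}\int_0^\pi\varphi^2$, with no residual term. The ``classical relation'' you mention for $\Delta^{\langle1\rangle}(\la_n)$ involving $\int_0^\pi\varphi^2$ is therefore unnecessary; all you need is $-\Delta^{\langle1\rangle}(\la_n) = \beta_n/\alpha_n = \dfrac{r_1(\la_n)}{\alpha_n\varphi(\pi,\la_n)}$, which you already have. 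So the computation is more straightforward than your outline suggests, and avoids the L'H\^opital step of the paper's version.
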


\begin{proof}
	The relation \eqref{wron} for $\psi(x, \la)$ and $\varphi(x, \la_n)$ implies
	$$
	\frac{d}{dx}\langle\psi(x, \la), \varphi(x, \la_{n})\rangle = (\la - \la_{n})\psi(x, \la)\varphi(x, \la_{n}).
	$$
	Integrating the both sides, we get
	\begin{align}\notag
	(\la-\la_{n})\int\limits_0^\pi{\psi(x, \la)\varphi(x, \la_{n})dx} = \langle\psi(x, \la), \varphi(x, \la_{n})\rangle\bigg|_0^\pi = \\ \notag
	-\psi^{[1]}(0, \la) - r_1(\la)\varphi^{[1]}(\pi, \la_{n}) - r_2(\la)\varphi(\pi, \la_{n}).
	\end{align}
	Dividing the both sides by $(\la - \la_{n})$ and using \eqref{charfun}, we obtain
	\begin{gather}\label{intlem}
	\int\limits_0^\pi{\psi(x, \la)\varphi(x, \la_{n})dx} = \frac{ \Delta(\la) - r_1(\la)\varphi^{[1]}(\pi, \la_{n}) - r_2(\la)\varphi(\pi, \la_{n})}{\la - \la_{n}}.
	\end{gather}
	Passing to the limits as $\la$ tends to $\la_{n}$ and using \eqref{beta}, we get
	\begin{gather}\label{left}
	\lim_{\lambda \to \lambda_n} \int\limits_0^\pi{\psi(x, \la)\varphi(x, \la_{n})dx = \beta_n\int\limits_0^\pi{\varphi^2(x, \la_{n})dx} = \frac{r_1(\la_{n})}{\varphi(\pi, \la_n)}\int\limits_0^\pi{\varphi^2(x, \la_{n})dx}}.
	\end{gather}
	Due to L'H\^opital's rule, the right-hand side of \eqref{intlem} turns into
	\begin{gather}\notag
	\lim_{\lambda \to \lambda_n}\frac{\Delta(\la) - r_1(\la)\varphi^{[1]}(\pi, \la_{n}) - r_2(\la)\varphi(\pi, \la_{n})}{\la - \la_{n}}=\Delta^{\langle1\rangle}(\la_{n}) - r_1^{\langle1\rangle}(\la_{n})\varphi^{[1]}(\pi,\la_{n})-r_2^{\langle1\rangle}(\la_{n}) \varphi(\pi, \la_{n}).
	\end{gather}
	The value $\Delta^{\langle1\rangle}(\la_{n})$ can be found from \eqref{defM1} and \eqref{beta}: 
	$$
	\mathop{\mathrm{Res}}\limits_{\la = \la_{n}} M(\la) = -\frac{\psi(0, \la_{n})}{\Delta^{\langle1\rangle}(\la_{n})} = -\frac{\beta_n}{\Delta^{\langle1\rangle}(\la_{n})} = \alpha_{n}.
	$$
	Hence
	$$
	\Delta^{\langle1\rangle}(\la_{n}) = -\frac{\beta_n}{\alpha_{n}} = -\frac{r_1(\la_{n})}{\alpha_{n}\varphi(\pi, \la_{n})}.
	$$
	So, we get
	\begin{multline} \label{right}
	\lim_{\lambda \to \lambda_n}\frac{ \Delta(\la) - r_1(\la)\varphi^{[1]}(\pi, \la_{n}) - r_2(\la)\varphi(\pi, \la_{n})}{\la - \la_{n}} \\ = -\frac{r_1(\la_{n})}{\alpha_{n}\varphi(\pi, \la_{n})}
	-  r_1^{\langle1\rangle}(\la_{n})\varphi^{[1]}(\pi, \la_{n})- r_2^{\langle1\rangle}(\la_{n})\varphi(\pi, \la_{n}).
	\end{multline}
	Substituting \eqref{left} and \eqref{right} into \eqref{intlem}, we arrive at the assertion of the lemma.
\end{proof}

\section{Reconstruction formulas} \label{sec:rec}

In this section, we derive reconstruction formulas for the solution of Inverse Problem~\ref{ip:main3}. For this purpose, we consider two boundary value problems $L = L(\sigma, r_1, r_2)$ and $\tilde L = L(\tilde\sigma, \tilde r_1, \tilde r_2)$. Assume that $\sigma, \tilde \sigma \in L_2(0,\pi)$, $(r_1, r_2), (\tilde r_1, \tilde r_2) \in \mathcal R$, $M_1 = \tilde M_1$, and the eigenvalues of $L$ and $\tilde L$ are simple. Note that the quasi-derivatives corresponding to $L$ and $\tilde L$ are different: $y^{[1]} = y' - \sigma y$ and $\tilde y^{[1]} = y' - \tilde \sigma y$, respectively. We will use $\tilde L$ as the so-called \textit{model problem} and recover the coefficients of the problem $L$. Note that, for the zero model problem : $\tilde \sigma(x) \equiv 0$, $\tilde r_1(\la) = \la^{M_1}$, $\tilde r_2(\la) = 0$ the reconstruction formulas have been obtained in \cite{ChitBond}. However, for the proof of Theorem~\ref{stability_thm}, we need reconstruction formulas for the case of an arbitrary model problem $\tilde L$. Thus, the reconstruction formulas and their derivation become more technically complicated than the ones from \cite{ChitBond}.

Introduce the notations
\begin{equation} \notag
\def\arraystretch{1.5}
\left.
\begin{array}{c}
\la_{n0} = \la_n, \quad \la_{n1} = \tilde \la_n, \quad \rho_{n0} = \rho_n, \quad \rho_{n1} = \tilde \rho_n, \quad \alpha_{n0} = \alpha_n, \quad \alpha_{n1} = \tilde \alpha_n, \\
\varphi_{ni}(x) = \varphi(x, \la_{ni}), \quad \tilde \varphi_{ni}(x) = \tilde \varphi(x, \la_{ni}).
\end{array} \quad \right\}
\end{equation}

Let us get reconstruction formulas for $\sigma(x)$, $r_1(\la)$, $r_2(\la)$, assuming that $\tilde \sigma(x)$, $\tilde r_1(\la)$, $\tilde r_2(\la)$ are known.

\begin{thm} \label{mainthm}
The solution of Inverse Problem~\ref{ip:main3} can be found by the reconstruction formulas
	\begin{align} \label{sigma_series}
	\sigma(x) = \tilde \sigma(x) - 2 \sum_{k=1}^\infty \sum_{j=0}^1 {(-1)^j\alpha_{kj}\Bigg(\tilde \varphi_{kj}(x)\varphi_{kj}(x) - \frac{1}{2}\Bigg)},
	\end{align}
	\begin{align} \label{r1_series}
	r_1(\la) = \prod_{k = 1}^{\infty} \frac{\la - \la_{k0}}{\la - \la_{k1}}\Bigg(\tilde r_1(\la) - \sum_{k=1}^\infty \sum_{j=0}^1 \frac{(-1)^j\alpha_{kj}\varphi_{kj}(\pi)(\tilde r_1(\la)\tilde\varphi^{[1]}_{kj}(\pi) + \tilde r_2(\la)\tilde\varphi_{kj}(\pi))}{\la - \la_{kj}} \Bigg),
	\end{align}
	\begin{align}\notag 
	r_2(\la) &= \prod_{k = 1}^{\infty} \frac{\la - \la_{k0}}{\la - \la_{k1}}\Bigg(\tilde r_2(\la) - \tilde r_1(\la) \sum_{k=1}^\infty \sum_{j=0}^1 {(-1)^j\alpha_{kj}\Bigg(\tilde \varphi_{kj}(\pi)\varphi_{kj}(\pi) - 1\Bigg)} \\ \label{r2_series}
	&+ \sum_{k = 1}^{\infty} \sum_{j=0}^1 \frac{(-1)^j\alpha_{kj}\varphi^{[1]}_{kj}(\pi)(\tilde r_1(\la)\tilde\varphi^{[1]}_{kj}(\pi) + \tilde r_2(\la)\tilde\varphi_{kj}(\pi))}{\la - \la_{kj}} \Bigg),
	\end{align}
	where the series in \eqref{sigma_series} converges in $L_2(0,\pi)$ and the series and the infinite products in \eqref{r1_series} and \eqref{r2_series} converge absolutely and uniformly on compact sets in $\mathbb C \setminus \{ \la_{ni}\}_{n \ge 1, \, i = 0, 1}$.
\end{thm}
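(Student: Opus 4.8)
The plan is to prove Theorem~\ref{mainthm} by first establishing the three reconstruction formulas at the level of \emph{finite approximations}, and then passing to the limit. Fix $N \ge 1$ and consider the pair of problems $L$ and $\tilde L$ whose spectral data agree except possibly in the first $N$ eigenvalues; equivalently, consider the rational "transfer" factor $\prod_{k=1}^N \frac{\la - \la_{k0}}{\la - \la_{k1}}$ and a finite sum over $k = 1, \dots, N$. The starting point is the contour-integral representation of the difference $M(\la) - \tilde M(\la)$ together with the standard relations linking $\varphi$, $\psi$, $\Phi$ to the Weyl function and the weight numbers $\alpha_n$ (Section~\ref{sec:prelim}), in particular $\psi(x, \la_n) = \beta_n \varphi(x, \la_n)$, $\alpha_n = -\beta_n / \Delta^{\langle 1\rangle}(\la_n)$, and Lemma~\ref{lemforz}. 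The idea, following \cite{BondTamkang, ChitBond}, is to write $\varphi(x, \la)$ (resp.\ $\varphi^{[1]}(x,\la)$, and the "boundary values" at $x = \pi$) in terms of $\tilde\varphi$ via a Gelfand--Levitan-type transformation whose kernel is built from the finite spectral data, then read off $\sigma$, $r_1$, $r_2$ from the behavior at $x = \pi$ and from the asymptotics as $\la \to \infty$.

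First I would derive the $\sigma$-formula \eqref{sigma_series}: using equation \eqref{eqsi} in the form $-(y^{[1]})' - \sigma y^{[1]} - \sigma^2 y = \la y$ and the Wronskian identity \eqref{wron}, one gets a relation $\frac{d}{dx}\langle \varphi(x,\la), \tilde\varphi(x,\mu)\rangle = \dots$ whose integration, combined with the transformation operator of Proposition~\ref{transform_varphi}, yields $\sigma(x) - \tilde\sigma(x)$ as a bilinear series in $\varphi_{kj}(x)\tilde\varphi_{kj}(x)$; the $-\tfrac12$ subtraction inside the bracket is exactly the counterterm that makes the series converge in $L_2$ thanks to the asymptotics \eqref{la_asymp}, \eqref{alpha_asymp} and the boundedness $\sup_x \|\mathcal K(x,\cdot)\|_{L_2} < \infty$. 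Next, for $r_1$ and $r_2$, I would evaluate the analogous transformation identity at $x = \pi$. Since $\Delta(\la) = r_1(\la)\varphi^{[1]}(\pi,\la) + r_2(\la)\varphi(\pi,\la)$ and similarly for $\tilde\Delta$, and since $\Delta(\la)/\tilde\Delta(\la) = \prod_k \frac{\la - \la_{k0}}{\la - \la_{k1}}$, matching the $\varphi(\pi,\cdot)$-coefficient and the $\varphi^{[1]}(\pi,\cdot)$-coefficient on both sides produces \eqref{r1_series} and \eqref{r2_series}; Lemma~\ref{lemforz} is what converts the residue terms involving $r_j^{\langle 1\rangle}(\la_n)$ into the clean form with $\alpha_n$ and $\varphi_{kj}(\pi)$ that appears in the stated formulas. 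At the finite level everything is a genuine rational identity, so one must then check that the finite approximations $r_1^{(N)}(\la)$, $r_2^{(N)}(\la)$ are polynomials of the correct degree $M_1$: this follows because the apparent poles at $\la = \la_{ni}$ cancel (each $\frac{1}{\la - \la_{kj}}$ is multiplied by the product factor that vanishes there) and the growth at infinity is controlled by the degree of $\tilde r_1, \tilde r_2$.

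The main obstacle, as the authors themselves flag, is the convergence of the series and infinite products as $N \to \infty$, and here $r_2$ is genuinely harder than $\sigma$ and $r_1$. For $\sigma$ and $r_1$ the combination of the $l_2$-asymptotics \eqref{la_asymp}, \eqref{alpha_asymp}, the uniform $L_2$-bound on the transformation kernels, and standard estimates on $\prod_{k>N}\frac{\la - \la_{k0}}{\la - \la_{k1}} = 1 + O(\sum_{k>N}|\rho_{k0}-\rho_{k1}|\cdot|\rho_{k0}+\rho_{k1}|/\dots)$ give absolute and uniform convergence on compacts away from the poles; the series \eqref{sigma_series} converges in $L_2$ by a Bessel-type argument on $\sum_k \alpha_{kj}(\varphi_{kj}\tilde\varphi_{kj} - \tfrac12)$. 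But the last sum in \eqref{r2_series}, $\sum_k \sum_j \frac{(-1)^j\alpha_{kj}\varphi^{[1]}_{kj}(\pi)(\tilde r_1(\la)\tilde\varphi^{[1]}_{kj}(\pi) + \tilde r_2(\la)\tilde\varphi_{kj}(\pi))}{\la - \la_{kj}}$ has individual terms of size $\sim \rho_k$ (because $\varphi^{[1]}(\pi, \la_k)$ and $\tilde\varphi^{[1]}(\pi,\la_k)$ grow like $\rho_k$), so term-by-term absolute convergence fails; instead one must group the $j = 0$ and $j = 1$ terms together and exploit the cancellation between $\varphi^{[1]}_{k0}\tilde\varphi^{[1]}_{k0}$ and $\varphi^{[1]}_{k1}\tilde\varphi^{[1]}_{k1}$ (they differ by $O(\rho_k \cdot (|\rho_{k0}-\rho_{k1}| + |\alpha_{k0}-\alpha_{k1}| + \dots))$), which is precisely why the proof has to invoke the Cauchy criterion for the finite approximations rather than a direct comparison test. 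Concretely, I would show that the partial sums $S_N(\la)$ form a Cauchy sequence in the sup norm on each compact $K \subset \mathbb{C}\setminus\{\la_{ni}\}$, using the trigonometric representations of $\varphi^{[1]}$, $\tilde\varphi^{[1]}$ from Proposition~\ref{transform_varphi} and summation-by-parts to trade one factor of $\rho_k$ against the $l_2$ decay. Once the limits exist and are analytic on $\mathbb{C}\setminus\{\la_{ni}\}$, removability of the singularities upgrades $r_1, r_2$ to entire functions, and control of their growth at infinity (again via the trigonometric representations and the degree of $\tilde r_1, \tilde r_2$) forces them to be polynomials with $(r_1, r_2) \in \mathcal R$; the identity $\Delta/\tilde\Delta = \prod_k(\la - \la_{k0})/(\la - \la_{k1})$ then confirms that $\{\la_n\}$ is the spectrum and, via the residue computation of Lemma~\ref{lemforz}, that $\{\alpha_n\}$ are the weight numbers, completing the verification that the constructed $(\sigma, r_1, r_2)$ solves Inverse Problem~\ref{ip:main3}.
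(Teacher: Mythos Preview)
Your high-level strategy (finite approximations, then limits, with extra care for $r_2$) matches the paper, but two of your key steps diverge from what the paper actually does, and one of them looks like a genuine gap.

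\textbf{Derivation of the $r_1$, $r_2$ formulas.} You propose to use $\Delta(\la)/\tilde\Delta(\la) = \prod_k \frac{\la - \la_{k0}}{\la - \la_{k1}}$ and then ``match the $\varphi(\pi,\cdot)$-coefficient and the $\varphi^{[1]}(\pi,\cdot)$-coefficient''. This does not work as stated: $\Delta$ is built from $\varphi, \varphi^{[1]}$ while $\tilde\Delta$ is built from $\tilde\varphi, \tilde\varphi^{[1]}$, which are different functions, so there is nothing to match; and the product identity for $\Delta/\tilde\Delta$ is not an input but a consequence. The paper instead works with the \emph{Weyl solution} $\Phi$. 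From the boundary condition one has $r_2(\la)/r_1(\la) = -\Phi^{[1]}(\pi,\la)/\Phi(\pi,\la)$; substituting the series \eqref{series_phi} for $\Phi$ and its quasi-derivative, then dividing numerator and denominator by $\tilde\Phi(\pi,\la)$, yields $r_2/r_1 = C(\la)/Z(\la)$ with explicit $C,Z$ of the form appearing in \eqref{r1_series}--\eqref{r2_series}. Lemma~\ref{lemforz} is then used (via Lemma~\ref{lem:cz}) to show $C(\la_{n1})=Z(\la_{n1})=0$, so that multiplication by $\prod_k(\la-\la_{k0})/(\la-\la_{k1})$ produces entire functions; its role is pole-cancellation, not ``converting residue terms''.

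\textbf{Convergence of the $r_2$ series.} Your plan is to group the $j=0,1$ terms and use summation-by-parts against the $l_2$ decay. The paper's argument (Lemma~\ref{lem:fundamental}) is different and more robust: having already proved $\sigma^K \to \sigma$ in $L_2$, it deduces that the transformation kernels $\mathcal K_j^K(\pi,\cdot)$ of Proposition~\ref{transform_varphi} form Cauchy sequences in $L_2$, and then uses the representations $\varphi^{K[1]}_{kj}(\pi) = -\rho_{kj}\sin(\rho_{kj}\pi) + \rho_{kj}\int_0^\pi \mathcal K_2^K(\pi,t)\sin(\rho_{kj}t)\,dt + \mathcal C^K(\pi)$. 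The dangerous leading term $-\rho_{kj}\sin(\rho_{kj}\pi)$ is independent of $K$ and drops out of $\varphi^{K[1]}_{kj}(\pi) - \varphi^{N[1]}_{kj}(\pi)$; what remains is controlled by $\|\mathcal K_2^K - \mathcal K_2^N\|_{L_2}\to 0$. Your direct $j=0$ vs.\ $j=1$ cancellation instead produces differences like $\varphi^{[1]}_{k0}(\pi)-\varphi^{[1]}_{k1}(\pi)$, which are $O(\rho_k\delta_k)$ and still grow with $k$; getting the series to converge would require a further layer of cancellation that you have not identified. The paper avoids this by bootstrapping from the already-established $\sigma$-convergence.
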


In this section, we do not provide a rigorous proof of Theorem~\ref{main_equation_thm}, since a more general fact is actually proved in Section~\ref{sec:solvstab}.
Specifically, for arbitrary data $\{ \la_n, \alpha_n \}_{n \ge 1}$ satisfying the hypothesis of Theorem~\ref{stability_thm} (not necessarily being the spectral data of some problem $L$), we prove the convergence of the series in~\eqref{sigma_series}--\eqref{r2_series}. Next, it is shown that the right-hand sides of \eqref{r1_series} and \eqref{r2_series} are polynomials of suitable degrees and that the problem $L$ with the coefficients defined by the right-hand sides of \eqref{sigma_series}--\eqref{r2_series} has the spectral data $\{ \la_n, \alpha_n \}_{n \ge 1}$. 
Here, we only describe the idea of deriving the reconstruction formulas.

By using the contour integration in the $\la$-plane, we have obtained the relations between the solutions of $L$ and $\tilde L$ in \cite{ChitBond}.

\begin{prop}[\cite{ChitBond}]
	The following representations hold:
	\begin{gather} \label {series_varphi}
	\varphi(x, \la) = \tilde \varphi(x, \la) - \sum_{k=1}^\infty\bigg( \alpha_{k0}\tilde D(x, \la, \la_{k0})\varphi_{k0}(x) - \alpha_{k1}\tilde D(x, \la, \la_{k1})\varphi_{k1}(x) \bigg),\\ \label {series_phi}
	\Phi(x, \la) = \tilde \Phi(x, \la) - \sum_{k=1}^\infty\bigg( \alpha_{k0}\tilde E(x, \la, \la_{k0})\varphi_{k0}(x) - \alpha_{k1}\tilde E(x, \la, \la_{k1})\varphi_{k1}(x) \bigg),
	\end{gather}
	where
	\begin{equation} \label{defDE}
	\tilde D(x, \la, \mu) = \frac {\langle \tilde \varphi(x, \la), \tilde \varphi(x, \mu) \rangle}{\la-\mu}, \quad \tilde E(x, \la, \mu) = \frac {\langle \tilde \Phi(x, \la), \tilde \varphi(x, \mu) \rangle}{\la-\mu}.
	\end{equation}
\end{prop}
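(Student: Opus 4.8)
These two representations are the engine of the method of spectral mappings (and the source of the reconstruction formulas of Theorem~\ref{mainthm}), and I would prove them by combining the transformation matrix of the pair $L,\tilde L$ with a Mittag--Leffler expansion in the $\la$-plane.

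\emph{Step 1 (transformation matrix).} Fix $x\in[0,\pi]$. Let $Y(x,\la)$ be the $2\times2$ matrix with entries $Y_{11}=\varphi$, $Y_{12}=\Phi$, $Y_{21}=\varphi^{[1]}$, $Y_{22}=\Phi^{[1]}$, and let $\tilde Y(x,\la)$ be the analogous matrix built from $\tilde\varphi,\tilde\Phi$ and the quasi-derivative $\tilde y^{[1]}$. By \eqref{wron} with $\mu=\la$ and the data at $x=0$, $\det Y\equiv\det\tilde Y\equiv1$, so $P(x,\la):=Y(x,\la)\tilde Y(x,\la)^{-1}$ is well defined, $\det P\equiv1$, and the \emph{same} $P$ transforms both columns, i.e.
$$\varphi=P_{11}\tilde\varphi+P_{12}\tilde\varphi^{[1]},\qquad \Phi=P_{11}\tilde\Phi+P_{12}\tilde\Phi^{[1]},$$
with $P_{11}=\varphi\tilde\Phi^{[1]}-\Phi\tilde\varphi^{[1]}$, $P_{12}=\Phi\tilde\varphi-\varphi\tilde\Phi$. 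These are meromorphic in $\la$ with poles only among $\{\la_n\}\cup\{\tilde\la_n\}$ (coming from $\Phi,\tilde\Phi$), and Proposition~\ref{transform_varphi} together with its counterparts for $\varphi^{[1]},\Phi,\tilde\varphi,\tilde\Phi$ gives $P_{11}(x,\la)\to1$, $P_{12}(x,\la)\to0$ as $\la\to\infty$ uniformly in $x$ off the spectra. A genuine subtlety here is that $y^{[1]}=y'-\sigma y$ and $\tilde y^{[1]}=y'-\tilde\sigma y$ are different quasi-derivatives, so the identities above must be read via the regularized first-order systems, as in \cite{BondTamkang,ChitBond}.

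\emph{Step 2 (Mittag--Leffler / contour integration).} Since $P_{11}-1$ and $P_{12}$ vanish at infinity, each equals the sum of its principal parts. The residues are computed from $\Phi=-\psi/\Delta$, \eqref{beta}, and the relation $\alpha_n=-\beta_n/\Delta^{\langle1\rangle}(\la_n)$ derived inside the proof of Lemma~\ref{lemforz} (with its $\tilde L$-analogue): the residue of $\Phi(x,\cdot)$ at $\la_n$ is $\alpha_{n0}\varphi_{n0}(x)$, of $\tilde\Phi(x,\cdot)$ at $\tilde\la_n$ is $\alpha_{n1}\tilde\varphi_{n1}(x)$, and similarly for $\tilde\Phi^{[1]},\tilde\varphi^{[1]}$. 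Substituting the expansions of $P_{11}-1$ and $P_{12}$ into $\varphi=\tilde\varphi+(P_{11}-1)\tilde\varphi+P_{12}\tilde\varphi^{[1]}$ and collecting, for each $n$, the coefficient of $\alpha_{nj}\varphi_{nj}(x)/(\la-\la_{nj})$, the two surviving pieces assemble into $\tilde\varphi(x,\la)\tilde\varphi^{[1]}(x,\la_{nj})-\tilde\varphi^{[1]}(x,\la)\tilde\varphi(x,\la_{nj})=\langle\tilde\varphi(x,\la),\tilde\varphi(x,\la_{nj})\rangle$, i.e. into $\mp\tilde D(x,\la,\la_{nj})$; this produces \eqref{series_varphi}. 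Running the same computation for $\Phi=\tilde\Phi+(P_{11}-1)\tilde\Phi+P_{12}\tilde\Phi^{[1]}$ replaces these Wronskians by $\langle\tilde\Phi(x,\la),\tilde\varphi(x,\la_{nj})\rangle$, i.e. by $\tilde E(x,\la,\la_{nj})$, and yields \eqref{series_phi}; here one also notes that the apparent poles of the right-hand side at $\la=\tilde\la_n$ cancel against those of $\tilde\Phi(x,\la)$, consistently with $\Phi$ being regular there. Equivalently, the whole step can be phrased as integrating $\tilde D(x,\la,\mu)\,(\tilde M-M)(\mu)\,\varphi(x,\mu)$ (resp.\ with $\tilde E$) over expanding circles $\Gamma_N$ in the $\mu$-plane and reading off the residues at $\mu=\la_n$ and $\mu=\tilde\la_n$.

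\emph{Step 3 (convergence --- the main obstacle).} The catch is that the individual Mittag--Leffler series over $\{\la_n\}$ alone need not converge: the residues grow like $\rho_n$ while the poles grow like $\rho_n^2$, so termwise one has only $O(1/n)$. Convergence is restored by keeping the $j=0$ and $j=1$ summands of each $n$ together --- exactly the grouping written in \eqref{series_varphi}--\eqref{series_phi}. By \eqref{la_asymp} and \eqref{alpha_asymp}, $\{\rho_n-\tilde\rho_n\},\{\alpha_n-\tilde\alpha_n\}\in l_2$; Proposition~\ref{transform_varphi} gives $|\varphi_{nj}(x)|\le C$ and shows that the sequences $\{\tilde D(x,\la,\la_{nj})\}_n$, $\{\tilde E(x,\la,\la_{nj})\}_n$ lie in $l_2$ (Bessel's inequality for the oscillatory integrals $\int_0^x\tilde\varphi(t,\la)\cos\rho_{nj}t\,dt$), as do their derivatives in the second argument once a factor $\rho_n^{-1}$ is extracted, while $|\la_{n0}-\la_{n1}|=O(\rho_n|\rho_n-\tilde\rho_n|)$. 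Writing $\alpha_{n0}\tilde D(x,\la,\la_{n0})\varphi_{n0}(x)-\alpha_{n1}\tilde D(x,\la,\la_{n1})\varphi_{n1}(x)$ as a telescoping sum of three terms, each term is a product of two $l_2$-sequences, hence $l_1$, so the series in \eqref{series_varphi}--\eqref{series_phi} converge absolutely and uniformly on compact subsets of $[0,\pi]\times(\mathbb C\setminus\{\la_{ni}\})$. It remains to justify that the $\Gamma_N$-integrals tend to $0$: this follows from the exponential-type asymptotics of $\varphi,\Phi$ in Proposition~\ref{transform_varphi} (and its analogues) together with $M(\la)\to0$, which give an $O(1/N)$ bound on $\Gamma_N$. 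I expect these uniform estimates on $\varphi,\Phi,\tilde D,\tilde E$ and the $\Gamma_N$-bound --- not the residue bookkeeping --- to be the substantive part; once \eqref{series_varphi}--\eqref{series_phi} are available, the reconstruction formulas of Theorem~\ref{mainthm} follow by reading \eqref{sigma_series} off the near-diagonal of \eqref{series_varphi} (after reducing $\tilde D$ via \eqref{wron} to $\int_0^x\tilde\varphi(t,\la)\tilde\varphi(t,\cdot)\,dt$, with the regularizing correction) and \eqref{r1_series}--\eqref{r2_series} off \eqref{series_phi} and its $x$-derivative at $x=\pi$, using $\Phi(\pi,\la)=-r_1(\la)/\Delta(\la)$, $\Phi^{[1]}(\pi,\la)=r_2(\la)/\Delta(\la)$, and $\Delta(\la)/\tilde\Delta(\la)=\prod_k(\la-\la_{k0})/(\la-\la_{k1})$.
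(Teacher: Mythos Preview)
The paper does not prove this proposition; it is quoted from \cite{ChitBond} with only the remark that it was obtained ``by using the contour integration in the $\la$-plane.'' Your plan --- the transformation matrix $P=Y\tilde Y^{-1}$, its Mittag--Leffler expansion via residues of $\Phi,\tilde\Phi$, and the grouped convergence argument --- is precisely the method of spectral mappings as implemented in \cite{ChitBond,BondTamkang,Yur02}, and your residue bookkeeping, sign checks, and identification of the technical obstacles (the mismatch of quasi-derivatives $y^{[1]}$ vs.\ $\tilde y^{[1]}$, and the necessity of pairing the $j=0,1$ terms) are all correct, so this is essentially the same approach.
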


The reconstruction formulas of Theorem~\ref{mainthm} are obtained from the relations \eqref{series_varphi} and \eqref{series_phi} by formal calculations. Thus, we do not prove here convergence of series and existence of derivatives. These issues will be rigorously proved in Section~\ref{sec:solvstab}.

Substituting \eqref{series_varphi} into the \eqref{eqv} and using the relation $\sigma(x) = \int_0^x {q(x)dx} + \sigma(0)$, we obtain \eqref{sigma_series}. Proceed to the derivation of \eqref{r1_series} and \eqref{r2_series}.
From \eqref{WeylsolBC} we get:
\begin{gather}\label{weylsolfrac}
\frac{r_2(\la)}{r_1(\la)} = -\frac{\Phi^{[1]}(\pi, \la)}{\Phi(\pi, \la)},\\ \label{tildeweylsolfrac}
\frac{\tilde r_2(\la)}{\tilde r_1(\la)} = -\frac{\tilde\Phi^{[1]}(\pi, \la)}{\tilde\Phi(\pi, \la)}.
\end{gather}

Using \eqref{wron} and \eqref{defDE}, one can show that
$$
\tilde E'(x, \la, \mu) = \tilde \Phi(x, \la) \tilde \varphi(x, \mu).
$$

Therefore, differentiating \eqref{series_phi}, we obtain
\begin{align} \nonumber
\Phi^{[1]}(x, \la)& = \Phi'(x, \la) - \sigma(x)\Phi(x, \la) \\ \nonumber
&= \tilde \Phi^{[1]}(x, \la) + \Bigg(\tilde \sigma(x) - \sigma(x) - \sum\limits_{k=1}^\infty\sum\limits_{j=1}^1 (-1)^j\alpha_{kj}\tilde\varphi_{kj}(x)\varphi_{kj}(x)\Bigg)\tilde \Phi(x, \la)\\ \label{Phiquas}
& - \sum\limits_{k=1}^\infty\sum\limits_{j=1}^1 (-1)^j\alpha_{kj}\tilde E(x, \la, \la_{kj})\varphi^{[1]}_{kj}(x).
\end{align}

Substituting \eqref{series_phi} and \eqref{Phiquas} into \eqref{weylsolfrac} and using \eqref{sigma_series}, we get:
\begin{align}\notag
\frac{r_2(\la)}{r_1(\la)} =& -\Bigg(\tilde\Phi^{[1]}(\pi, \la)+\tilde\Phi(\pi, \la)\sum\limits_{k=1}^{\infty}\sum\limits_{j=0}^{1}(-1)^j\alpha_{kj}(\tilde\varphi_{kj}(\pi)\varphi_{kj}(\pi)-1)\\ 
&-\notag\sum\limits_{k=1}^{\infty}\sum\limits_{j=0}^{1}(-1)^j\alpha_{kj}\tilde E(\pi, \la, \la_{kj})\varphi_{kj}^{[1]}(\pi)\Bigg)\Bigg(\tilde \Phi(\pi, \la) - \sum\limits_{k=1}^\infty\sum\limits_{j=0}^1(-1)^j \alpha_{kj}\tilde E(\pi, \la, \la_{kj})\varphi_{kj}(\pi)\Bigg)^{-1}.
\end{align}

Dividing numerator and denominator by $\tilde\Phi(\pi, \la)$ and taking \eqref{tildeweylsolfrac} into account, we obtain
\begin{gather*}
\frac{r_2(\la)}{r_1(\la)} = \frac{C(\la)}{Z(\la)},
\end{gather*}
where
\begin{align} \notag
C(\la) &= \tilde r_2(\la) - \tilde r_1(\la)\sum\limits_{k=1}^{\infty}\sum\limits_{j=0}^{1}(-1)^j\alpha_{kj}(\tilde\varphi_{kj}(\pi)\varphi_{kj}(\pi)-1)\\ \notag
&+\sum\limits_{k=1}^\infty\sum\limits_{j=0}^1{\frac{(-1)^j\alpha_{kj}\varphi^{[1]}_{kj}(\pi)(\tilde r_1(\la) \tilde\varphi^{[1]}_{kj}(\pi) + \tilde r_2(\la) \tilde\varphi_{kj}(\pi))}{\la-\la_{kj}}},
\end{align}
\begin{align} \label{denominator}
Z(\la) = \tilde r_1(\la) - \sum\limits_{k=1}^\infty\sum\limits_{j=0}^1{\frac{(-1)^j\alpha_{kj}\varphi_{kj}(\pi)(\tilde r_1(\la) \tilde\varphi^{[1]}_{kj}(\pi) + \tilde r_2(\la) \tilde\varphi_{kj}(\pi))}{\la-\la_{kj}}}.
\end{align}

In order to obtain polynomials from $C(\la)$ and $Z(\la)$, we need the following technical lemma.

\begin{lem}\label{lem:cz}
$C(\la_{n1})=Z(\la_{n1})=0$, $n \ge 1$.
\end{lem}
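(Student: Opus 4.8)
The plan is to evaluate $Z(\la)$ and $C(\la)$ at $\la = \la_{n1} = \tilde\la_n$ directly, exploiting the fact that $\tilde\la_n$ is an eigenvalue of $\tilde L$ and hence of the model problem. I would treat the two cases in parallel, since the structure of $C(\la)$ and $Z(\la)$ in \eqref{denominator} is essentially the same up to the replacement of $\varphi_{kj}(\pi)$ by $\varphi^{[1]}_{kj}(\pi)$ and a cosmetic shift ($-1$ vs.\ nothing) in one of the sums.

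First I would focus on the double sum over $k$ and $j$ in \eqref{denominator} and isolate the single term that is singular as $\la \to \tilde\la_n$, namely the term with $k = n$, $j = 1$: its numerator contains the factor $\tilde r_1(\la)\tilde\varphi^{[1]}_{n1}(\pi) + \tilde r_2(\la)\tilde\varphi_{n1}(\pi)$, which is exactly the characteristic function $\tilde\Delta(\la)$ of the model problem evaluated at $\la$ (compare \eqref{charfun}). Since $\tilde\la_n$ is a zero of $\tilde\Delta$, this numerator vanishes at $\la = \tilde\la_n$, so the apparent pole is removable; I would compute its limit by L'H\^opital's rule (or equivalently via $\tilde\Delta^{\langle1\rangle}(\tilde\la_n)$), obtaining $-\alpha_{n1}\tilde\varphi_{n1}(\pi)\tilde\Delta^{\langle1\rangle}(\tilde\la_n)$ for $Z$ and $-\alpha_{n1}\tilde\varphi^{[1]}_{n1}(\pi)\tilde\Delta^{\langle1\rangle}(\tilde\la_n)$ for $C$. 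Here I would use the model-problem analogue of the relation established in the proof of Lemma~\ref{lemforz}, i.e.\ $\tilde\Delta^{\langle1\rangle}(\tilde\la_n) = -\tilde r_1(\tilde\la_n)/(\tilde\alpha_n\tilde\varphi(\pi,\tilde\la_n))$, together with $\tilde\alpha_n = \alpha_{n1}$. Substituting, the $k=n$, $j=1$ contribution to $Z(\tilde\la_n)$ collapses neatly to $\tilde r_1(\tilde\la_n)$, which cancels the leading term $\tilde r_1(\la)$ in \eqref{denominator}; an analogous cancellation handles the leading term $\tilde r_2(\la)$ in $C$, after accounting for the extra $-1$ summand, which contributes $-\tilde r_1(\tilde\la_n)\sum(\dots)$ — here one must check that the $\tilde\varphi_{n1}(\pi)\varphi_{n1}(\pi)$ piece of that sum combines correctly with the $k=n$, $j=1$ term of the last sum in $C$.

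Next I would argue that the remaining terms all vanish at $\la = \tilde\la_n$. For the terms with $k \neq n$ or $j = 0$, the denominator $\la - \la_{kj}$ is nonzero at $\tilde\la_n$, so I need the numerators to vanish; this follows because, for $i = 1$, the factor $\tilde r_1(\la)\tilde\varphi^{[1]}_{k1}(\pi) + \tilde r_2(\la)\tilde\varphi_{k1}(\pi) = \tilde\Delta(\la)$ — wait, that is only true when the argument of $\tilde\varphi$ matches, so more carefully: evaluating the whole expression at $\la = \tilde\la_n$, the surviving leading terms $\tilde r_1(\tilde\la_n)$, $\tilde r_2(\tilde\la_n)$ and the relation \eqref{charfun} for $\tilde L$ give $\tilde r_1(\tilde\la_n)\tilde\varphi^{[1]}(\pi,\tilde\la_n) + \tilde r_2(\tilde\la_n)\tilde\varphi(\pi,\tilde\la_n) = \tilde\Delta(\tilde\la_n) = 0$, which is the identity that ties everything together. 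I would organize the computation so that, after the cancellation from the $k=n$, $j=1$ term, the entire remaining expression is a fixed scalar multiple of $\tilde\Delta(\tilde\la_n)$ — and hence zero. The cleanest route is probably to go back to the origin of $Z$ and $C$: recall $Z(\la) = \tilde\Phi(\pi,\la)^{-1}\bigl(\tilde r_1(\la)\Phi(\pi,\la)\cdot(\text{something})\bigr)$-type identities implicit in the derivation above, so that $Z(\la) = r_1(\la)\Phi(\pi,\la)/\tilde\Phi(\pi,\la)$ up to the infinite product, and $Z(\tilde\la_n) = 0$ because $\tilde\Phi(\pi,\la)$ has a pole at $\tilde\la_n$ while $r_1(\la)\Phi(\pi,\la)$ stays finite there (the model problem's Weyl solution blows up at its own eigenvalue).

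The main obstacle I anticipate is bookkeeping rather than conceptual: one must keep straight that $\varphi_{kj}$ carries the \emph{unperturbed} potential $\sigma$ while $\tilde\varphi_{kj}$ carries $\tilde\sigma$, that $\tilde\varphi_{k1}(x) = \tilde\varphi(x,\tilde\la_k)$ is a genuine eigenfunction of $\tilde L$ whereas $\tilde\varphi_{k0}(x) = \tilde\varphi(x,\la_k)$ is not, and that the removable singularity occurs only in the $j=1$, $k=n$ slot. A secondary technical point is justifying the term-by-term limit $\la \to \tilde\la_n$ inside the infinite sums, which is licensed by the absolute and uniform convergence on compact subsets of $\mathbb C \setminus \{\la_{ni}\}$ asserted in Theorem~\ref{mainthm} (proved later in Section~\ref{sec:solvstab}); I would cite that uniform convergence explicitly to pass to the limit under the summation sign.
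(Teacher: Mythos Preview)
Your proposal has a genuine gap in the central computation. The numerator of the $(k,j)=(n,1)$ term is
\[
\tilde r_1(\la)\,\tilde\varphi^{[1]}(\pi,\tilde\la_n) + \tilde r_2(\la)\,\tilde\varphi(\pi,\tilde\la_n),
\]
where $\tilde\varphi^{[1]}(\pi,\tilde\la_n)$ and $\tilde\varphi(\pi,\tilde\la_n)$ are \emph{constants}. Its $\la$-derivative is therefore
\[
\tilde r_1^{\langle1\rangle}(\la)\,\tilde\varphi^{[1]}_{n1}(\pi) + \tilde r_2^{\langle1\rangle}(\la)\,\tilde\varphi_{n1}(\pi),
\]
which is \emph{not} $\tilde\Delta^{\langle1\rangle}(\la)$; the latter also contains the terms $\tilde r_1(\la)\,\partial_\la\tilde\varphi^{[1]}(\pi,\la)$ and $\tilde r_2(\la)\,\partial_\la\tilde\varphi(\pi,\la)$. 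So your identification ``equivalently via $\tilde\Delta^{\langle1\rangle}(\tilde\la_n)$'' is incorrect, and the subsequent claim that this contribution ``collapses neatly to $\tilde r_1(\tilde\la_n)$'' fails (even with your incorrect value one gets $\tilde r_1(\tilde\la_n)\,\varphi_{n1}(\pi)/\tilde\varphi_{n1}(\pi)$, not $\tilde r_1(\tilde\la_n)$). More seriously, the terms with $k\neq n$ or $j=0$ do \emph{not} vanish individually, nor do they assemble into a scalar multiple of $\tilde\Delta(\tilde\la_n)$: each such numerator $\tilde r_1(\tilde\la_n)\tilde\varphi^{[1]}_{kj}(\pi)+\tilde r_2(\tilde\la_n)\tilde\varphi_{kj}(\pi)$ involves $\tilde\varphi$ at $\la_{kj}\neq\tilde\la_n$ and has no reason to be zero. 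The fallback idea via the pole of $\tilde\Phi(\pi,\la)$ is in the right spirit but, as stated, is too vague to constitute a proof.

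What the paper does, and what you are missing, is that the \emph{entire} sum over $(k,j)$ must be kept together and matched against the series representation \eqref{series_varphi} evaluated at $(x,\la)=(\pi,\la_{n1})$. Writing $\tilde D(\pi,\la_{n1},\la_{kj})=\dfrac{\tilde\varphi_{n1}(\pi)\tilde\varphi^{[1]}_{kj}(\pi)-\tilde\varphi^{[1]}_{n1}(\pi)\tilde\varphi_{kj}(\pi)}{\la_{n1}-\la_{kj}}$ and then using $\tilde r_2(\tilde\la_n)\tilde\varphi_{n1}(\pi)=-\tilde r_1(\tilde\la_n)\tilde\varphi^{[1]}_{n1}(\pi)$, one sees that the off-singular terms in $Z(\la_{n1})$ are exactly $\dfrac{\tilde r_1(\tilde\la_n)}{\tilde\varphi_{n1}(\pi)}$ times the corresponding summands in \eqref{series_varphi} for $\varphi_{n1}(\pi)$. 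After this identification, $Z(\la_{n1})$ reduces to a single expression to which Lemma~\ref{lemforz} (applied to the model problem $\tilde L$) applies directly and gives zero. The same argument works for $C(\la_{n1})$. In short, the mechanism is not ``singular term cancels the leading term, the rest vanish'' but rather ``all terms together reproduce $\varphi_{n1}(\pi)$ via \eqref{series_varphi}, and then Lemma~\ref{lemforz} closes the identity''.
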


\begin{proof}
From \eqref{denominator}, we get
\begin{align} \notag
Z(\la_{n1}) = \tilde r_1(\la_{n1}) &- \sum\limits_{k=1}^\infty\Bigg(\frac{\alpha_{k0}\varphi_{k0}(\pi)(\tilde r_1(\la_{n1}) \tilde\varphi^{[1]}_{k0}(\pi) + \tilde r_2(\la_{n1}) \tilde\varphi_{k0}(\pi))}{\la_{n1}-\la_{k0}} \\ \notag &-\frac{\alpha_{k1}\varphi_{k1}(\pi)(\tilde r_1(\la_{n1}) \tilde\varphi^{[1]}_{k1}(\pi) + \tilde r_2(\la_{n1}) \tilde\varphi_{k1}(\pi))}{\la_{n1}-\la_{k1}}(1-\delta_{nk})\Bigg) \\ \notag & + \alpha_{n1}\varphi_{n1}(\pi)(\tilde r_1^{\langle1\rangle}(\la_{n1})\tilde\varphi^{[1]}_{n1}(\pi)+\tilde r_2^{\langle1\rangle}(\la_{n1}) \tilde\varphi_{n1}(\pi)),
\end{align}
where $\delta_{nk}$ is the Kronecker delta.

The relations \eqref{wron} and \eqref{defDE} imply
$\tilde D(x, \la, \mu) = \int\limits_0^x \tilde \varphi(t, \la)\tilde \varphi(t, \mu) \,dt$. Using this integral form, we get from \eqref{series_varphi} that
\begin{align} \notag
\varphi_{n1}(\pi) = \tilde \varphi_{n1}(\pi) &- \sum_{k=1}^\infty\bigg( \alpha_{k0}\tilde D(\pi, \la_{n1}, \la_{k0})\varphi_{k0}(\pi) - \alpha_{k1}\tilde D(\pi, \la_{n1}, \la_{k1})\varphi_{k1}(\pi)(1-\delta_{nk}) \bigg)\\ \notag
 &+ \alpha_{n1}\varphi_{n1}(\pi)\int_0^\pi {\tilde\varphi^2_{n1}(t)dt}.
\end{align}

Multiplying the latter equality by $\dfrac{\tilde r_1(\la_{n1})}{\tilde\varphi_{n1}(\pi)}$, we obtain
$$
Z(\la_{n1}) - \alpha_{n1}\varphi_{n1}(\pi)(\tilde r_1^{\langle1\rangle}(\la_{n1})\tilde\varphi^{[1]}_{n1}(\pi)+\tilde r_2^{\langle1\rangle}(\la_{n1}) \tilde\varphi_{n1}(\pi)) = \frac{\varphi_{n1}(\pi)\tilde r_1(\la_{n1})}{\tilde\varphi_{n1}(\pi)}\Bigg(\alpha_{n1}\int_0^\pi {\tilde\varphi^2_{n1}(t)dt} + 1\Bigg).
$$

Due to Lemma~\ref{lemforz}, we get that $Z(\la_{n1}) = 0$. In the same way we can show that $C(\la_{n1}) = 0$.
\end{proof}

According to Lemma~\ref{lem:cz}, $C(\la)$ and $Z(\la)$ multiplied by $\prod\limits_{k = 1}^{\infty} \dfrac{\la - \la_{k0}}{\la - \la_{k1}}$ are entire functions. Moreover, it will be shown in the proof of Theorem~\ref{stability_thm} that they are a pair of polynomials belonging to $\mathcal R$. This implies the reconstruction formulas \eqref{r1_series} and \eqref{r2_series}.

\section{Main equation} \label{sec:maineq}

In this section, we get the main equation of the problem $L$ and consider its properties, which are used in the next section to prove useful estimates.

As in the previous section, we continue considering the two problems $L = L(\sigma, r_1, r_2)$ and $\tilde L = L(\tilde \sigma, \tilde r_1,\tilde r_2)$.
Denote
\begin{gather} \label{defden}
\delta_n := |\rho_n - \tilde \rho_n| + |\alpha_n - \tilde \alpha_n|, \quad
\chi_{n} := \left\{\begin{array}{ll}
\delta_n^{-1}, \quad & \delta_n \neq 0,\\
0, \quad & \delta_n = 0,
\end{array}\right. \quad n \ge 1.
\end{gather}

According to the spectral data asymptotics \eqref{la_asymp} and \eqref{alpha_asymp}, we have $\{ \delta_n \} \in l_2$.

For $\la = \la_{ni}$, the relation \eqref{series_varphi} implies
\begin{equation} \label{main_varphi}
\tilde \varphi_{ni}(x) = \varphi_{ni}(x) + \sum_{k = 1}^{\infty} (\tilde Q_{ni; k0}(x) \varphi_{k0}(x) - \tilde Q_{ni;k1} \varphi_{k1}(x)), \quad n \ge 1, \, i = 0, 1,
\end{equation}
where 
\begin{equation} \label{defQ}
\tilde Q_{ni; kj}(x) = \alpha_{kj}\tilde D(x, \la_{ni}, \la_{kj}).
\end{equation}
 
Define the block vector $\varphi(x) = (\varphi_n(x))_{n \ge 1}$ and the block matrix $\tilde Q(x) = (\tilde Q_{n, k}(x))_{n, k \ge 1}$ as follows:
$$
\varphi_n(x) =
\begin{pmatrix}
\varphi_{n,0}(x)\\
\varphi_{n,1}(x)
\end{pmatrix}, \qquad
\tilde Q_{n, k}(x) = 
\begin{pmatrix}
\tilde Q_{n0; k0}(x) & -\tilde Q_{n0; k1}(x)\\
\tilde Q_{n1; k0}(x) & -\tilde Q_{n1; k1}(x)
\end{pmatrix}.
$$
The block vector $\tilde \varphi(x)$ is defined analogously to $\varphi(x)$ replacing $\varphi_{n,i}(x)$ by $\tilde \varphi_{n,i}(x)$. Then,
the system \eqref{main_varphi} can be rewritten as
\begin{gather}\label{varphi_q_eqv}
\tilde\varphi(x) = (E+\tilde Q(x))\varphi(x),
\end{gather}
where $E$ is the infinite identity matrix.

Note that the series in \eqref{main_varphi} converges only ``with brackets''. In order to achieve the absolute convergence, we use the transform induced by the block row vector $T = (T_{n})_{n \ge1}$:
$$
T_{n} = 
\begin{pmatrix}
\chi_n & -\chi_n\\
0 & 1
\end{pmatrix}.
$$
Put 
\begin{equation} \label{defH}
\psi(x) = (\psi_n(x))_{n \ge 1} := T \varphi(x), \qquad \tilde H(x) = (\tilde H_{n,k}(x))_{n,k\ge 1} := T \tilde Q(x) T^{-1}.
\end{equation}
In the element-wise form, this means
\begin{gather} \label{psimain}
\psi_n(x) = 
	\begin{pmatrix}
	\psi_{n0}(x)\\
	\psi_{n1}(x)
	\end{pmatrix} =
	\begin{pmatrix}
	\chi_{n} & -\chi_{n}\\
	0 & 1
	\end{pmatrix}
	\begin{pmatrix}
	\varphi_{n,0}(x)\\
	\varphi_{n,1}(x)
	\end{pmatrix} = T_n\varphi_n(x), \\ \label{Hmatr}
	\begin{pmatrix}
	\tilde H_{n0, k0}(x) & \tilde H_{n0, k1}(x)\\
	\tilde H_{n1, k0}(x) & \tilde H_{n1, k1}(x)
	\end{pmatrix} =
	\begin{pmatrix}
	\chi_{n} & - \chi_{n}\\
	0 & 1
	\end{pmatrix}
	\begin{pmatrix}
	\tilde Q_{n0; k0}(x) & -\tilde Q_{n0; k1}(x)\\
	\tilde Q_{n1; k0}(x) & -\tilde Q_{n1; k1}(x)
	\end{pmatrix}
	\begin{pmatrix}
	\delta_{k} & 1\\
	0 & 1
	\end{pmatrix}.
\end{gather}
The block vector $\tilde \psi(x) := T \tilde \varphi(x)$ is obtained analogously.

Due to \eqref{psimain} and \eqref{Hmatr}, the relation \eqref{varphi_q_eqv} is reduced to the form $\tilde \psi(x) = (E + \tilde H(x)) \psi(x)$, which can be considered as a linear equation in a Banach space.
Indeed, let $V$ be the set of indices $v = (n, i)$, where $n \ge 1$, $i=0, 1$, and
let $m$ be the Banach space of bounded infinite sequences $a = (a_v)_{v \in V}$ with the norm $\| a \|_m = \sup_{v} |a_v|$.

\begin{thm}[\cite{ChitBond}]\label{main_equation_thm}
For each fixed $x \in [0,\pi]$, the vectors $\psi(x)$ and $\tilde \psi(x)$ belong to $m$ and $\tilde H(x)$ is a bounded linear operator in $m$.
Furthermore, the vector $\psi(x)$ satisfies the equation
\begin{gather}\label{main_equation}
\tilde \psi(x) = (E + \tilde H(x)) \psi(x)
\end{gather}
in the Banach space $m$. 
\end{thm}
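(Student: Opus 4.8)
The plan is to establish Theorem~\ref{main_equation_thm} in three stages: first control the blocks $\tilde Q_{n,k}(x)$ coming from the kernel $\tilde D$, then transfer those estimates through the similarity transform $T$ to the blocks $\tilde H_{n,k}(x)$, and finally read off boundedness of $\psi(x)$, $\tilde\psi(x)$ in $m$ and boundedness of $\tilde H(x)$ as an operator on $m$; the equation~\eqref{main_equation} itself is then just~\eqref{varphi_q_eqv} rewritten, since $T$ is invertible. Throughout, the asymptotics~\eqref{la_asymp} and~\eqref{alpha_asymp}, which give $\{\rho_n-\tilde\rho_n\}\in l_2$, $\{\alpha_n-\tilde\alpha_n\}\in l_2$, hence $\{\delta_n\}\in l_2$, will be the quantitative engine.

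First I would estimate $\tilde D(x,\la_{ni},\la_{kj})$. From~\eqref{defDE} together with~\eqref{wron} one has the integral form $\tilde D(x,\la,\mu)=\int_0^x\tilde\varphi(t,\la)\tilde\varphi(t,\mu)\,dt$, so using the integral representation of $\tilde\varphi$ from Proposition~\ref{transform_varphi} (with $\tilde{\mathcal K}_1$ square integrable and $\sup_x\|\tilde{\mathcal K}_1(x,\cdot)\|_{L_2}<\infty$) one gets the standard bound $|\tilde D(x,\la_{ni},\la_{kj})|\le C/(|n-k|+1)$, and, more to the point, the finer estimate $|\tilde D(x,\la_{n0},\la_{k0})-\tilde D(x,\la_{n1},\la_{k0})|\le C\,\delta_n/(|n-k|+1)$ (and similarly with the roles of the first and second indices swapped), reflecting that the two rows of $\tilde Q_{n,k}(x)$ differ only through $\la_{n0}$ vs.\ $\la_{n1}=\tilde\la_{n0}$. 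Diagonal terms $\tilde D(x,\la_{ni},\la_{nj})$ for $i\ne j$ are handled by the same difference estimate, and $\tilde D(x,\la_{ni},\la_{ni})=\int_0^x\tilde\varphi_{ni}^2$ is bounded uniformly in $n$ and $x$. Multiplying by $\alpha_{kj}=O(1)$ gives $|\tilde Q_{ni;kj}(x)|\le C/(|n-k|+1)$ with the row-difference gain of a factor $\delta_n$.

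Next I would push these through~\eqref{Hmatr}: writing out the $2\times2$ product $T_n\tilde Q_{n,k}(x)T_k^{-1}$, the entries of $\tilde H_{n,k}(x)$ are $\chi_n$ times the row-difference $\tilde Q_{n0;kj}-\tilde Q_{n1;kj}$ (for the rows indexed by $(n,0)$) or just $\tilde Q_{n1;kj}$-type terms (for the rows indexed by $(n,1)$), postmultiplied by columns involving $\delta_k$ and $1$. The key cancellation is that $\chi_n\cdot\delta_n=1$ when $\delta_n\ne0$ and $=0$ otherwise, so the dangerous factor $\chi_n$ is exactly absorbed by the $\delta_n$ gain in the row-difference estimate; likewise the $\delta_k$ in the column of $T_k^{-1}$ multiplies a bounded block. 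The upshot is $|\tilde H_{ni;kj}(x)|\le C\big(\delta_k+\tfrac{1}{|n-k|+1}\big)\cdot(\text{small})$ — more precisely each entry of $\tilde H_{n,k}(x)$ is bounded by $C\,\xi_{n,k}$ with $\sup_n\sum_k\xi_{n,k}<\infty$, using $\{\delta_k\}\in l_1$ (which follows from $\{\delta_k\}\in l_2$ once one also uses that $\sum_k 1/((|n-k|+1)(|k-m|+1))$ is uniformly bounded, a routine convolution bound). This yields $\sup_n\sum_k\|\tilde H_{n,k}(x)\|<\infty$ uniformly in $x$, which is precisely the statement that $\tilde H(x)$ acts boundedly on $m$ with norm controlled independently of $x$. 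That $\psi(x)\in m$ is immediate from $|\varphi_{ni}(x)|\le C$ (again from Proposition~\ref{transform_varphi}) and $|\psi_{n0}(x)|=\chi_n|\varphi_{n0}(x)-\varphi_{n1}(x)|\le C$, using the analogous difference estimate $|\varphi_{n0}(x)-\varphi_{n1}(x)|\le C\delta_n$ for solutions of $L$; $\tilde\psi(x)\in m$ is the same computation for $\tilde L$. Finally, applying $T$ (a bounded, boundedly invertible operator on $m$ by the same $\chi_n\delta_n$ bookkeeping) to~\eqref{varphi_q_eqv} gives~\eqref{main_equation}, and the series now converges absolutely in $m$ rather than merely ``with brackets''.

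The main obstacle is the second stage: verifying that the similarity transform $T$ genuinely converts the conditionally convergent, bracketed series of~\eqref{main_varphi} into an absolutely convergent one, i.e.\ that every potentially unbounded $\chi_n$ in $T$ and $\delta_k$ in $T^{-1}$ is matched by a compensating factor in the $\tilde Q$-block. This requires the difference estimates on $\tilde D$ (and on $\varphi_{n0}-\varphi_{n1}$) to be sharp in their dependence on $\delta_n$, and it requires the uniform convolution bound so that the resulting matrix has finite $\sup_n\sum_k$-norm. Once these two ingredients are in place the rest is bookkeeping. (Since, as the authors remark, a more general statement is reproved from scratch in Section~\ref{sec:solvstab} for arbitrary data satisfying the hypotheses of Theorem~\ref{stability_thm}, here one may legitimately lean on the construction in~\cite{ChitBond} and merely indicate these estimates.)
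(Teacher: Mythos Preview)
The paper does not give its own proof of Theorem~\ref{main_equation_thm}; it is quoted from~\cite{ChitBond}, and the key estimate $\|\tilde H(x)\|_{m\to m}\le C\sup_n\sum_k\delta_k/(|n-k|+1)$ is invoked again (citing~\cite{BondTamkang}) in the proof of Lemma~\ref{main_equation_thm_solvability}. Your outline follows exactly the standard route used in those references: bound the kernels $\tilde D(x,\la_{ni},\la_{kj})$ via Proposition~\ref{transform_varphi}, push the estimates through the similarity $T(\cdot)T^{-1}$, and read off the $m\to m$ norm as a Schur-type row-sum bound. So in spirit you match the intended argument.

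There is, however, one genuine slip. You write that ``$\{\delta_k\}\in l_1$ (which follows from $\{\delta_k\}\in l_2$ once one also uses \dots\ a routine convolution bound)''. This is false: $l_2\not\subset l_1$, and the convolution identity $\sum_k\frac{1}{(|n-k|+1)(|k-m|+1)}\le\frac{C}{|n-m|+1}$ has nothing to do with it. What is actually needed---and what the paper uses in Lemma~\ref{main_equation_thm_solvability}---is simply Cauchy--Schwarz:
\[
\sum_k\frac{\delta_k}{|n-k|+1}\le\Bigl(\sum_k\delta_k^2\Bigr)^{1/2}\Bigl(\sum_k\frac{1}{(|n-k|+1)^2}\Bigr)^{1/2}\le C\,\|\{\delta_k\}\|_{l_2},
\]
uniformly in $n$. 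No $l_1$ hypothesis is required, and the bound $|\tilde H_{ni,kj}(x)|\le C\delta_k/(|n-k|+1)$ is exactly the right target.

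A second point to tighten: to get that target bound for \emph{all four} entries of $\tilde H_{n,k}(x)$ you need more than you have written down. Computing~\eqref{Hmatr} explicitly, the entry $\tilde H_{n0,k1}$ equals $\chi_n\bigl(\tilde Q_{n0;k0}-\tilde Q_{n1;k0}-\tilde Q_{n0;k1}+\tilde Q_{n1;k1}\bigr)$, and here the second column of $T_k^{-1}$ is $(1,1)^T$, so it contributes no factor $\delta_k$. Your sketch absorbs $\chi_n$ via the row-difference gain $\delta_n$, but that leaves only the non-summable $C/(|n-k|+1)$. One must invoke the \emph{double}-difference estimate
\[
\bigl|\tilde Q_{n0;k0}-\tilde Q_{n1;k0}-\tilde Q_{n0;k1}+\tilde Q_{n1;k1}\bigr|\le \frac{C\,\delta_n\delta_k}{|n-k|+1},
\]
which is standard (iterate the single-variable difference argument on $\tilde D$, using also $|\alpha_{k0}-\alpha_{k1}|\le\delta_k$) but should be stated explicitly. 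Similarly, $\tilde H_{n1,k1}=\tilde Q_{n1;k0}-\tilde Q_{n1;k1}$ needs the column-difference estimate to produce the $\delta_k$; you do allude to this (``roles of the first and second indices swapped''), but your summary bound $C(\delta_k+\tfrac{1}{|n-k|+1})\cdot(\text{small})$ is too vague to be correct as written, since $\sum_k 1/(|n-k|+1)$ diverges.
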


Equation \eqref{main_equation} is called \textit{the main equation} of Inverse Problem~\ref{ip:main3}. Note that the vector function $\tilde \psi(x)$ and the operator $\tilde H(x)$ can be constructed by using only the problem $\tilde L$ and the spectral data $\{ \la_n, \alpha_n\}_{n \ge 1}$ of $L$, while $\psi(x)$ is related to the coefficients of $L$. Therefore, one can solve the main equation \eqref{main_equation} to find $\psi(x)$ and then reconstruct $\sigma(x)$, $r_1(\la)$, and $r_2(\la)$ by the formulas \eqref{sigma_series}--\eqref{r2_series}. In in \cite{ChitBond}, this algorithm has been considered in detail  for $\tilde L = L(0, \la^{M_1}, 0)$. 

The main equation \eqref{main_equation} is essential for the proof of Theorem~\ref{stability_thm}. In addition, we need to study some properties of the operator $T^{-1} \tilde H(x) = \tilde Q(x) T^{-1}$.
For this purpose, we need a number of technical lemmas.

\begin{prop}[\cite{He01}] \label{rz_cos}
Let $\{\rho_k\}_{k=1}^\infty$ be an arbitrary sequence of distinct elements: $\rho_k \ne \rho_s$ if $k \neq s$, such that $\rho_k = k +\varkappa_k$, $\{\varkappa_k\} \in l_2$. Then the sequence $\{\cos(\rho_kx)\}_{k=0}^\infty$ is a Riesz basis in $L_2(0, \pi)$. Moreover, if $\{f_k\}_{k=0}^\infty$is any sequence of $l_2$, then
$$
F(x):=\sum\limits_{k=0}^\infty f_k\cos(\rho_kx) \in L_2(0, \pi), \quad \|F\|_{L_2}\le C\|\{f_k\}\|_{l_2};
$$
and if $F(x)$ is any function of $L_2(0, \pi)$, then $\{f_k\}_{k=0}^\infty \in l_2$, where
$$
f_k:=\int\limits_0^\pi F(x)\cos(\rho_kx) \,dx, \quad \|\{f_k\}\|_{l_2}\le C\|F\|_{L_2}.
$$
\end{prop}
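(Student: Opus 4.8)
The plan is to realize $\{\cos(\rho_k x)\}_{k\ge 0}$ as an $l_2$-small perturbation of the orthogonal cosine system $\{\cos(kx)\}_{k\ge 0}$ in $L_2(0,\pi)$ and then to invoke the Bari-type stability theory for Riesz bases. The first step is the elementary estimate
\[
\|\cos(\rho_k\cdot)-\cos(k\cdot)\|_{L_2(0,\pi)}\le C\,|\varkappa_k|,\qquad k\ge 0,
\]
which holds because $\cos$ is Lipschitz on each horizontal strip of $\mathbb C$, the sequence $\{\varkappa_k\}$ is bounded (being in $l_2$), and $x$ runs over the bounded interval $[0,\pi]$; summation gives $\sum_{k\ge 0}\|\cos(\rho_k\cdot)-\cos(k\cdot)\|_{L_2}^2\le C\|\{\varkappa_k\}\|_{l_2}^2<\infty$. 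Thus $\{\cos(\rho_k x)\}$ differs from a (normalized) orthonormal basis by a sequence that is square-summable in $L_2$.

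From this closeness I would first extract the two norm inequalities, which in fact need only the Bessel (upper) bound. If $\{g_k\}$ is obtained from an orthonormal system $\{e_k\}$ with $\sum_k\|g_k-e_k\|^2=S<\infty$, then $\sum_k|\langle F,g_k\rangle|^2\le(1+\sqrt S)^2\|F\|^2$ for every $F$, since $\langle F,g_k\rangle=\langle F,e_k\rangle+\langle F,g_k-e_k\rangle$ and the second terms are controlled by Cauchy--Schwarz together with $\sum_k\|g_k-e_k\|^2$. Applied here (the cosines being real-valued), this is precisely the analysis bound $\|\{f_k\}\|_{l_2}\le C\|F\|_{L_2}$ with $f_k=\int_0^\pi F(x)\cos(\rho_k x)\,dx$; dualizing the analysis operator yields the synthesis bound $\big\|\sum_k f_k\cos(\rho_k\cdot)\big\|_{L_2}\le C\|\{f_k\}\|_{l_2}$, and in particular $F(x)=\sum_k f_k\cos(\rho_k x)$ is a well-defined element of $L_2(0,\pi)$ whenever $\{f_k\}\in l_2$.

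It remains to prove that $\{\cos(\rho_k x)\}_{k\ge 0}$ is genuinely a Riesz basis, i.e.\ that it also admits a positive lower bound. Here I would use a finite-rank reduction: choose $N$ so large that the tail $\sum_{k\ge N}\|\cos(\rho_k\cdot)-\cos(k\cdot)\|_{L_2}^2$ is small enough for the classical Paley--Wiener perturbation criterion to apply to the mixed system $\{\cos(kx)\}_{0\le k<N}\cup\{\cos(\rho_k x)\}_{k\ge N}$, which is therefore a Riesz basis of $L_2(0,\pi)$. One then exchanges, one at a time, each of the finitely many functions $\cos(kx)$, $k<N$, for $\cos(\rho_k x)$; each exchange preserves the Riesz-basis property provided the incoming element is not orthogonal to the biorthogonal functional it replaces, equivalently provided the system remains complete and minimal. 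Minimality is routine once one writes down the candidate biorthogonal system from the canonical product $P(z)=\prod_{k}\bigl(1-z^2/\rho_k^2\bigr)$ and its quotients $P(z)/(z^2-\rho_n^2)$, which lie in the Paley--Wiener class and hence restrict to $L_2(0,\pi)$-functions.

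The main obstacle is completeness of $\{\cos(\rho_k x)\}_{k\ge 0}$. Suppose $F\in L_2(0,\pi)$ is orthogonal to every $\cos(\rho_k x)$; extending $F$ evenly and setting $G(z):=\int_0^\pi F(x)\cos(zx)\,dx$, the Paley--Wiener theorem produces an entire function of exponential type $\le\pi$ with $G|_{\mathbb R}\in L_2(\mathbb R)$ that vanishes at all the points $\rho_k$. Since $\{\rho_k\}$ differs from the nonnegative integers by an $l_2$ sequence, this is the \emph{critical density} regime, so a crude zero-count against $\sin\pi z$ does not suffice; instead one compares $G$ with the canonical product over $\{\pm\rho_k\}$, shows that the quotient extends to an entire function that is bounded (matching the $L_2$ decay of $G$ on $\mathbb R$ against the growth of the product along the real axis), hence constant, and then forces that constant to be $0$. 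This is exactly the Levinson-type completeness theorem for $l_2$-perturbations of the integers, and it is the technical core of the proposition; granting it, the Riesz-basis conclusion and both norm estimates follow from the steps above.
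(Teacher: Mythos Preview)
The paper does not prove this proposition at all: it is quoted verbatim as a known result from \cite{He01} (He--Volkmer), so there is no ``paper's own proof'' to compare against. Your proposal is therefore doing strictly more than the paper does, and what you outline is essentially the standard route to this result: realize $\{\cos(\rho_k x)\}$ as an $l_2$-perturbation of the orthonormal cosine system, get the Bessel (analysis/synthesis) bounds immediately from $\sum_k\|\cos(\rho_k\cdot)-\cos(k\cdot)\|^2<\infty$, then upgrade to a genuine Riesz basis by a Paley--Wiener tail argument plus finitely many exchanges, with completeness supplied by a Levinson-type theorem in the Paley--Wiener space.

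Two remarks on your sketch. First, you are right to flag that the two displayed inequalities in the proposition only require the upper (Bessel) bound, since the $f_k$ there are the inner products $\int_0^\pi F\cos(\rho_k x)\,dx$, not the biorthogonal expansion coefficients; this part of your argument is complete as stated. Second, the step you correctly call the ``technical core'' --- completeness at critical density for an $l_2$-perturbation of the integers --- is exactly where He--Volkmer do the real work (via careful estimates on the canonical product and comparison with $\sin\pi z$); your description of the mechanism (quotient by the canonical product, boundedness forcing a constant, constant forced to zero) is accurate but is a nontrivial theorem in its own right, and the distinctness hypothesis $\rho_k\ne\rho_s$ is what guarantees the zeros of the canonical product are simple and hence that the biorthogonal system you write down actually exists. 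With that caveat, your plan is sound.
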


Propositions~\ref{transform_varphi} and~\ref{rz_cos} together imply the following lemma.

\begin{lem} \label{rz_varphi}
Let $\{ \rho_k \}_{k = 1}^{\infty}$ be any sequence of distinct complex numbers satisfying the asymptotics \eqref{la_asymp}.	
If $\{f_k\}_{k=1}^\infty$ is any sequence of $l_2$, then
$$
F(x):=\sum\limits_{k=1}^\infty f_k \tilde\varphi(x, \rho_k) \in L_2(0, \pi), \quad \|F\|_{L_2}\le C\|\{f_k\}\|_{l_2};
$$
and if $F(x)$ is any function of $L_2(0, \pi)$, then $\{f_k\}_{k=1}^\infty \in l_2$, where
\begin{gather}\label{fk}
f_k:=\int\limits_0^\pi F(x)\tilde\varphi(x,\rho_k)dx, \quad \|\{f_k\}\|_{l_2}\le C\|F\|_{L_2}.
\end{gather}
\end{lem}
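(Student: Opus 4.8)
The plan is to derive Lemma~\ref{rz_varphi} from Proposition~\ref{rz_cos} and Proposition~\ref{transform_varphi} by treating $\tilde\varphi(x,\rho_k)$ as a perturbation of $\cos\rho_k x$ via the transformation-operator kernel $\tilde{\mathcal K}_1$. Concretely, since $\{\rho_k\}$ satisfies \eqref{la_asymp}, the shifted sequence $\{\rho_k + M_1 + 1\}_{k\ge 1}$ has the form $k + \varkappa_k$ with $\{\varkappa_k\}\in l_2$, so Proposition~\ref{rz_cos} applies to it after a routine relabeling; I will note this index shift once and then work with the hypotheses of Proposition~\ref{rz_cos} directly. The Volterra integral operator $I + \tilde{\mathcal K}_1$ defined by $(I+\tilde{\mathcal K}_1)g(x) = g(x) + \int_0^x \tilde{\mathcal K}_1(x,t) g(t)\,dt$ is a bounded, boundedly invertible operator on $L_2(0,\pi)$ (boundedness from $\sup_x\|\tilde{\mathcal K}_1(x,\cdot)\|_{L_2(0,x)} < \infty$, invertibility because it is identity plus a Volterra operator), and by Proposition~\ref{transform_varphi} it maps $\cos\rho_k x$ to $\tilde\varphi(x,\rho_k)$.

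For the first (synthesis) direction, given $\{f_k\}\in l_2$, Proposition~\ref{rz_cos} gives $G(x) := \sum_k f_k\cos\rho_k x \in L_2(0,\pi)$ with $\|G\|_{L_2}\le C\|\{f_k\}\|_{l_2}$. Applying the bounded operator $I+\tilde{\mathcal K}_1$ term by term — which is justified because it is bounded and linear on $L_2$ and the partial sums converge in $L_2$ — yields $F(x) = (I+\tilde{\mathcal K}_1)G(x) = \sum_k f_k\tilde\varphi(x,\rho_k)\in L_2(0,\pi)$ and $\|F\|_{L_2}\le \|I+\tilde{\mathcal K}_1\|\,\|G\|_{L_2}\le C\|\{f_k\}\|_{l_2}$. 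For the second (analysis) direction, given $F\in L_2(0,\pi)$, set $G := (I+\tilde{\mathcal K}_1)^{-1}F\in L_2(0,\pi)$ with $\|G\|_{L_2}\le C\|F\|_{L_2}$ by bounded invertibility; Proposition~\ref{rz_cos} produces coefficients $g_k := \int_0^\pi G(x)\cos\rho_k x\,dx$ with $\{g_k\}\in l_2$ and $\|\{g_k\}\|_{l_2}\le C\|G\|_{L_2}$. The remaining point is to identify $g_k$ with the integral $f_k = \int_0^\pi F(x)\tilde\varphi(x,\rho_k)\,dx$ appearing in \eqref{fk}; this follows from the adjoint relation $\int_0^\pi F(x)\tilde\varphi(x,\rho_k)\,dx = \int_0^\pi \big((I+\tilde{\mathcal K}_1)^*G\big)(x)\cos\rho_k x$ only if one is careful, so instead I would argue directly: since $F = \sum_k g_k\tilde\varphi(x,\rho_k)$ in $L_2$ (by the synthesis direction applied to $\{g_k\}$), and since $\{\tilde\varphi(x,\rho_k)\}$ is a Riesz basis of $L_2(0,\pi)$ — being the image of the Riesz basis $\{\cos\rho_k x\}$ under the isomorphism $I+\tilde{\mathcal K}_1$ — the coefficients in this Riesz-basis expansion are unique and are given by pairing with the biorthogonal (dual) basis; a short computation shows the dual basis functional evaluated at $F$ equals exactly $\int_0^\pi F(x)\tilde\varphi(x,\rho_k)\,dx$ up to the same structure as in Proposition~\ref{rz_cos}, so $f_k = g_k$ and the bound $\|\{f_k\}\|_{l_2}\le C\|F\|_{L_2}$ follows.

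The main obstacle, and the only genuinely non-formal point, is the identification of the coefficient functionals $f_k$ in \eqref{fk} with the Riesz-basis coefficients $g_k$: one must verify that the biorthogonal system to $\{\tilde\varphi(x,\rho_k)\}$ consists precisely of the functions $\big((I+\tilde{\mathcal K}_1)^{-1}\big)^*\,(\text{dual of }\cos\rho_k x)$ and that pairing against it reproduces the plain integral $\int_0^\pi F(x)\tilde\varphi(x,\rho_k)\,dx$ rather than an integral against some modified kernel. This is where the specific structure of Proposition~\ref{rz_cos} — that the analysis functionals there are the naive integrals $\int_0^\pi F(x)\cos\rho_k x\,dx$, i.e.\ that $\{\cos\rho_k x\}$ behaves like an "almost orthonormal" basis — is essential; transporting this through the Volterra isomorphism requires checking that the transpose operator acts correctly, which is a finite computation using the explicit form of $\tilde{\mathcal K}_1$ but must be done with care. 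Everything else (the index shift, term-by-term application of a bounded operator, Volterra invertibility, Riesz-basis transport under an isomorphism) is standard and can be stated briefly.
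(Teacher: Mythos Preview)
There is a real gap in both directions. Your ``shift'' does not do what you need: replacing $\rho_k$ by $\rho_k+M_1+1$ changes the functions $\cos\rho_kx$ into $\cos((\rho_k+M_1+1)x)$, so Proposition~\ref{rz_cos} would then speak about the wrong system. What is actually available is an index \emph{relabeling} $\mu_j=\rho_{j+M_1+1}$, and that puts only the tail $\{\rho_k\}_{k>M_1}$ into the form required by Proposition~\ref{rz_cos}. The full system $\{\cos\rho_kx\}_{k\ge1}$ is \emph{not} a Riesz basis when $M_1\ge1$: one has $\rho_1,\dots,\rho_{M_1}\approx-M_1,\dots,-1$, and since $\cos(-jx)=\cos(jx)$ these are (nearly) duplicates of cosines already present among $k>M_1+1$. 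The paper states this explicitly. Hence your Riesz-basis-transport claim for $\{\tilde\varphi(x,\rho_k)\}$ collapses, and with it the coefficient-uniqueness argument you rely on for the analysis direction.

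For the analysis direction there is a second problem: with $F=(I+\tilde{\mathcal K}_1)G$ and $\tilde\varphi(\cdot,\rho_k)=(I+\tilde{\mathcal K}_1)\cos\rho_k\cdot$, the pairing $\int_0^\pi F\,\tilde\varphi(\cdot,\rho_k)$ equals $\int_0^\pi\big[(I+\tilde{\mathcal K}_1)^T(I+\tilde{\mathcal K}_1)G\big]\cos\rho_k\cdot$, not $\int_0^\pi G\cos\rho_k\cdot$; your $g_k$ and $f_k$ are genuinely different numbers unless $I+\tilde{\mathcal K}_1$ is an isometry. The clean route --- essentially what the paper does --- is to drop $(I+\tilde{\mathcal K}_1)^{-1}$ entirely and move the operator onto $F$ via the transpose:
\[
f_k=\int_0^\pi F(x)\,\tilde\varphi(x,\rho_k)\,dx
=\int_0^\pi F(x)\cos\rho_kx\,dx+\int_0^\pi\Big(\int_t^\pi F(x)\,\tilde{\mathcal K}_1(x,t)\,dx\Big)\cos\rho_kt\,dt,
\]
i.e.\ $f_k=\int_0^\pi[(I+\tilde{\mathcal K}_1)^TF](t)\cos\rho_kt\,dt$. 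Now apply the analysis half of Proposition~\ref{rz_cos} to the Riesz-basis tail $k>M_1$ and bound the finitely many remaining $f_k$ trivially. The synthesis direction is handled the same way: split off the first $M_1$ terms before invoking Proposition~\ref{rz_cos}, then apply the bounded operator $I+\tilde{\mathcal K}_1$.
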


\begin{proof}
From Proposition~\ref{transform_varphi} we get:
$$
F(x)=\sum\limits_{k=1}^\infty f_k\cos(\rho_{k}x) + \sum\limits_{k=1}^\infty f_k\int\limits_0^x\mathcal K_1(x, t)\cos(\rho_{k}t) \,dt =: F_1(x) + F_2(x).
$$

The system $\{\cos(\rho_kx)\}_{k=1}^\infty$ is not minimal, so it is not a Riesz basis. But we can take the subsystem $\{\cos(\rho_kx)\}_{k > M_1}$, which is a Riesz basis, so due to Proposition~\ref{rz_cos}, we get that 
$$
\sum\limits_{k=M_1+1}^\infty f_k\cos(\rho_{k}x) \in L_2(0, \pi), \quad \bigg\|\sum\limits_{k=M_1+1}^\infty f_k\cos(\rho_{k}x)\bigg\|_{L_2}\le C\|\{f_k\}\|_{l_2}.
$$
For the finite remainder the same estimate holds, so $F_1 \in L_2(0, \pi)$ and $\|F_1\|_{L_2}\le C\|\{f_k\}\|_{l_2}$.
As $\{\cos(\rho_kx)\}_{k > M_1}$ is a Riesz basis, then $\int\limits_0^x\mathcal K_1(x, t)\cos(\rho_{k}t)dt$ gives the Fourier coefficients for $\mathcal K_1(x, t)$ for each fixed $x \in [0, \pi]$. Using the properties of $\mathcal K_1(x, t)$ from Proposition~\ref{transform_varphi}, we conclude that
$$
\Bigg\|\Bigg\{\int\limits_0^x\mathcal K_1(x, t)\cos(\rho_{k}t) \,dt\Bigg\}\Bigg\|_{l_2} \le C\|\mathcal K_1(x, .)\|_{L_2(0, x)} < \infty.
$$

Then, we get:
\begin{gather*}
\sum\limits_{k=M_1+1}^\infty f_k\int\limits_0^x\mathcal K_1(x, t)\cos(\rho_{k}t) \,dt \in L_2(0, \pi), \\ \bigg\|\sum\limits_{k=M_1+1}^\infty f_k\int\limits_0^x\mathcal K_1(x, t)\cos(\rho_{k}t) \,dt\bigg\|_{L_2}\le C\|\{f_k\}\|_{l_2}.
\end{gather*}
For the finite remainder the same estimate holds, so $F_2 \in L_2(0, \pi)$ and $\|F_2\|_{L_2}\le C\|\{f_k\}\|_{l_2}$. Thus, the first part of the lemma is proved.

Proceed to the second part.
From Proposition~\ref{transform_varphi}, we get:
$$
f_k=\int\limits_0^\pi F(x)\cos(\rho_{k}x) \,dx + \int\limits_0^\pi F(x)\int\limits_0^x\mathcal K_1(x, t)\cos(\rho_{k}t) \,dt\,dx =: f_{k1} + f_{k2}.
$$

The sequences $\{f_{k1}\}_{k>M_1}$ consists of the Fourier coefficients for function $F(x)$ in the basis $\{\cos(\rho_kx)\}_{k > M_1}$, so using Proposition~\ref{rz_cos} we get that $\|\{f_{k1}\}\|_{l_2}\le C\|F\|_{L_2}$. Using the same ideas, we can get that $\|\{f_{k2}\}\|_{l_2}\le C\|F\|_{L_2}$.
This yields the claim.
\end{proof}

\begin{lem}\label{lem_qt}
For each fixed $x \in [0, \pi]$, the operator $\tilde Q(x)T^{-1}$ maps $m$ into $l_2$ and is uniformly bounded by $x \in [0, \pi]$.
\end{lem}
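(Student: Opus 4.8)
The plan is to estimate, uniformly in $x \in [0,\pi]$, the image of an arbitrary bounded sequence $a = (a_{kj})_{k \ge 1, j=0,1} \in m$ under the operator $\tilde Q(x) T^{-1}$. Writing out the block structure in \eqref{Hmatr} and \eqref{defQ}, the operator $\tilde Q(x) T^{-1}$ has entries built from $\alpha_{kj} \tilde D(x, \la_{ni}, \la_{kj})$, and the key cancellation comes from the presence of the factor $\delta_k$ in the first column of $T^{-1}$: the $(n i; k 0)$-entry effectively carries a factor $\delta_k \chi_n$ or a difference $\tilde D(x, \la_{ni}, \la_{k0}) - \tilde D(x, \la_{ni}, \la_{k1})$, both of which are $O(\delta_k)$ or $O(\chi_n \delta_k)$. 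So the first step is to record the explicit entry-wise formulas for $\tilde Q(x) T^{-1}$ and identify that each entry of the $(n,k)$-block is, up to a bounded factor, of the form $\alpha_{kj}$ times either $\tilde D(x, \la_{ni}, \la_{kj})$ with the $\chi_n$ weight, or a finite difference in the second argument that produces a $|\rho_{k0}-\rho_{k1}| \le \delta_k$ gain.

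Second, I would bound the Cauchy-data-type kernels $\tilde D(x,\la_{ni},\la_{kj})$. By the identity $\tilde D(x,\la,\mu) = \int_0^x \tilde\varphi(t,\la)\tilde\varphi(t,\mu)\,dt$ used in the proof of Lemma~\ref{lem:cz}, together with Proposition~\ref{transform_varphi}, one gets $|\tilde D(x,\la_{ni},\la_{kj})| \le \dfrac{C}{|n-k|+1}$ for $n \ne k$ and $|\tilde D(x,\la_{ni},\la_{kj})| \le C$ on the diagonal, uniformly in $x$; and for the finite differences, $|\tilde D(x,\la_{n i},\la_{k0}) - \tilde D(x,\la_{ni},\la_{k1})| \le C\delta_k\Big(\dfrac{1}{|n-k|+1}+\text{something in }l_2\Big)$, again via the integral representation and the mean-value-type estimate $|\tilde\varphi(t,\la_{k0}) - \tilde\varphi(t,\la_{k1})| \le C\delta_k$ that follows from Proposition~\ref{transform_varphi}. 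Using $\{\alpha_{kj}\}$ bounded (from \eqref{alpha_asymp}) and $\|a\|_m \le 1$, the $(ni)$-th component of $\tilde Q(x) T^{-1} a$ is then bounded by a sum of the shape $\sum_k \delta_k \big(\tfrac{1}{|n-k|+1}\big)$ plus the diagonal contribution $\delta_n |a_{\cdot}|$ carried by $\chi_n$; squaring, summing over $(n,i)$, and applying the discrete Young / Cauchy–Schwarz inequality with $\{\delta_k\} \in l_2$ yields $\|\tilde Q(x) T^{-1} a\|_{l_2} \le C\|a\|_m$ with $C$ independent of $x$. That is exactly the claimed mapping property and uniform boundedness.

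The main obstacle is bookkeeping the cancellation correctly: naively, $\tilde Q_{ni;k0}(x)$ alone is only $O(1/(|n-k|+1))$, which is \emph{not} in $l_2$ as a function of $k$, so without the $\delta_k$ (or the difference structure induced by $T^{-1}$) the sum over $k$ would diverge. One must check that the matrix product $\tilde Q(x)T^{-1}$ genuinely produces, in every block entry, either the $l_2$-decay factor $\delta_k$ or pairs up two kernels into a difference that is $O(\delta_k)$ — and simultaneously track the $\chi_n = \delta_n^{-1}$ weights from $T$ so that they are always absorbed (each $\chi_n$ that appears is multiplied, after the cancellation, by a compensating $\delta_n$ from the diagonal term $n=k$). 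Once the algebra of which factor sits where is pinned down, the analytic estimates are the routine ones from Proposition~\ref{transform_varphi} and Lemma~\ref{rz_varphi}, and the uniformity in $x$ is automatic because all the bounds on $\mathcal K_j$, $\mathcal C$, and hence on $\tilde D$, are uniform in $x$.
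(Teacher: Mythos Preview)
Your proposal has a genuine gap. First, a small confusion: there are no $\chi_n$ weights in $\tilde Q(x)T^{-1}$; those appear only after the \emph{left} multiplication by $T$ in $\tilde H = T\tilde Q T^{-1}$. The block $\tilde Q_{n,k}(x)T_k^{-1}$ has entries $\delta_k\,\tilde Q_{ni;k0}(x)$ and $\tilde Q_{ni;k0}(x)-\tilde Q_{ni;k1}(x)$, so the only gain is the factor $\delta_k$ in the column index---nothing to ``absorb'' on the row side. More seriously, the pointwise kernel bound you rely on, $|\tilde D(x,\la_{ni},\la_{kj})|\le C/(|n-k|+1)$ (and its finite-difference analogue), leads to
\[
\bigl|(\tilde Q(x)T^{-1}a)_{ni}\bigr|\le C\|a\|_m\sum_{k\ge1}\frac{\delta_k}{|n-k|+1},
\]
and this is \emph{not} in $l_2$ in $n$ in general. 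The convolution kernel $h_j=1/(|j|+1)$ is not in $l_1$, so discrete Young does not give $l_2*h\subset l_2$; in fact the convolution operator with $h$ is unbounded on $l_2$ (take $\delta_k=\mathbf 1_{1\le k\le N}$: then $\|\delta\|_{l_2}=\sqrt N$ while $\|\delta*h\|_{l_2}\gtrsim\sqrt N\log N$). Cauchy--Schwarz only yields the uniform (i.e.\ $m$) bound $C\delta\|a\|_m$, which is the estimate used for $\|\tilde H\|_{m\to m}$, not the $l_2$ claim.

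The paper sidesteps this by not using entrywise kernel bounds at all. It swaps the sum and the integral in $\tilde D(x,\la,\mu)=\int_0^x\tilde\varphi(t,\la)\tilde\varphi(t,\mu)\,dt$ to write
\[
(\tilde Q(x)T^{-1}f)_{ni}=\int_0^x F(t)\,\tilde\varphi_{ni}(t)\,dt,\qquad
F(t)=\sum_k\bigl(\delta_k f_{k0}\alpha_{k0}\tilde\varphi_{k0}(t)+f_{k1}(\alpha_{k0}\tilde\varphi_{k0}(t)-\alpha_{k1}\tilde\varphi_{k1}(t))\bigr),
\]
shows $F\in L_2(0,\pi)$ with $\|F\|_{L_2}\le C\|f\|_m$ (here the $\delta_k$ and the difference $\alpha_{k0}\tilde\varphi_{k0}-\alpha_{k1}\tilde\varphi_{k1}$ supply the $l_2$ decay in $k$ needed for the synthesis), and then applies the Bessel-type bound of Lemma~\ref{rz_varphi} to conclude that the ``Fourier coefficients'' $\int_0^x F\,\tilde\varphi_{ni}$ lie in $l_2$ with norm $\le C\|F\|_{L_2}$, uniformly in $x$. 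That Riesz-basis step is the missing ingredient; pointwise kernel estimates alone are too crude for the $m\to l_2$ mapping.
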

\begin{proof}
Let $x \in [0, \pi]$ be fixed. Let $f$ be an element from Banach space $m$. So,
$$
T^{-1}_nf_n = 
\begin{pmatrix}
	\delta_n f_{n0} + f_{n1}\\
	f_{n1}
\end{pmatrix}
$$
and
$$
\tilde Q_{n, k}(x)T^{-1}_kf_k = 
\begin{pmatrix}
	\tilde Q_{n0;k0}(x)(\delta_k f_{k0} + f_{k1}) + \tilde Q_{n0;k1}(x)f_{k1}\\
	\tilde Q_{n1;k0}(x)(\delta_k f_{k0} + f_{k1}) + \tilde Q_{n1;k1}(x)f_{k1}
\end{pmatrix}.
$$

Recall that $\tilde D(x, \la, \mu)$ can be represented in the integral form:
$$
\tilde D(x, \la, \mu) = \int\limits_0^x{\tilde\varphi(t, \la)\tilde\varphi(t, \mu)dt}.
$$

Using this formula and \eqref{defQ}, we represent the entries of
$\tilde Q(x)T^{-1}f$ as follows:
\begin{align} \notag
\bigg(\tilde Q(x)T^{-1}f\bigg)_{ni} &= \sum\limits_{k}^{}\Bigg(\delta_k f_{k0}\alpha_{k0}\int\limits_0^x{\tilde\varphi_{ni}(t)\tilde\varphi_{k0}(t)dt} + f_{k1}\alpha_{k0}\int\limits_0^x{\tilde\varphi_{ni}(t)\tilde\varphi_{k0}(t)dt} \\ \notag
&- f_{k1}\alpha_{k1}\int\limits_0^x{\tilde\varphi_{ni}(t)\tilde\varphi_{k1}(t)dt}\Bigg) \\ \notag
&= 
\sum\limits_{k}^{}\int\limits_0^x\tilde\varphi_{ni}(t)\Bigg( \delta_k f_{k0}\alpha_{k0}\tilde\varphi_{k0}(t) + f_{k1}\bigg( \alpha_{k0}\tilde\varphi_{k0}(t) - \alpha_{k1}\tilde\varphi_{k1}(t)  \bigg) \Bigg)dt.
\end{align}

Put
$$
F(t) = \sum\limits_{k}^{}\Bigg( \delta_k f_{k0}\alpha_{k0}\tilde\varphi_{k0}(t) + f_{k1}\bigg( \alpha_{k0}\tilde\varphi_{k0}(t) - \alpha_{k1}\tilde\varphi_{k1}(t)  \bigg) \Bigg).
$$
Using \eqref{la_asymp}, \eqref{alpha_asymp}, the estimate $|f_{kj}| \le \|f\|_m$, and Proposition~\ref{transform_varphi}, we conclude that 
\begin{equation} \label{estF}
F(t) \in L_2(0, \pi), \qquad \| F \|_{L_2(0,\pi)} \le C \| f \|_m. 
\end{equation}
So, 
\begin{equation} \label{QF}
\bigg(\tilde Q(x)T^{-1}f\bigg)_{ni} = \int\limits_0^x {F(t)\tilde\varphi_{ni}(t)}dt.
\end{equation}
According to Proposition~\ref{rz_varphi}, the obtained expression gives the Fourier coefficients of the functions $F_{[0, x]}(t)$, where
\begin{gather}\notag
F_{[0, x]}(t) =
\left\{\begin{array}{ll}
        F(t), \quad & t \in (0, x), \\
        0, \quad & t \in (x, \pi). \\
    \end{array}\right.
\end{gather}
Hence, $\tilde Q(x)T^{-1}f \in l_2$ and
$$
\left\| \tilde Q(x)T^{-1}f \right\|_{l_2} \le C \|F_{[0, x]}\|_{L_2(0, \pi)} \le C\|F\|_{L_2(0, \pi)} \le C\|f\|_m.
$$
Clearly, the constant $C$ does not depend on $x \in [0,\pi]$. This yields the claim.
\end{proof}

\begin{lem} 
Let $f \in m$ and $q_{ni}(x) = \bigg(\tilde Q(x)T^{-1}f\bigg)_{ni}$. Then $\{(q_{n0} - q_{n1})(x)\}_{n=1}^\infty \in l_1$ and $\|\{(q_{n0} - q_{n1})(x)\}\|_{l_1} \le C \|f\|_m$.
\end{lem}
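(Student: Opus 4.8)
The plan is to build on the representation \eqref{QF} obtained in the previous lemma. From the proof of Lemma~\ref{lem_qt}, for a fixed $x \in [0,\pi]$ we have
$$
q_{ni}(x) = \int_0^x F(t)\tilde\varphi_{ni}(t)\,dt, \qquad i = 0, 1,
$$
with one and the same $F \in L_2(0,\pi)$ satisfying $\|F\|_{L_2(0,\pi)} \le C\|f\|_m$. Therefore the difference is
$$
(q_{n0} - q_{n1})(x) = \int_0^x F(t)\bigl(\tilde\varphi(t, \la_{n0}) - \tilde\varphi(t, \la_{n1})\bigr)\,dt = \int_0^x F(t)\bigl(\tilde\varphi_{n0}(t) - \tilde\varphi_{n1}(t)\bigr)\,dt.
$$
The key point is that $\la_{n0} = \la_n$ and $\la_{n1} = \tilde\la_n$ differ by a small amount: $|\rho_{n0} - \rho_{n1}| \le \delta_n$ with $\{\delta_n\} \in l_2$. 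So the plan is to estimate the factor $\tilde\varphi_{n0}(t) - \tilde\varphi_{n1}(t)$ pointwise by something of order $\delta_n$ times an $L_2(0,\pi)$-function of $t$, and then use Cauchy--Schwarz together with $\{\delta_n\}\in l_2$ to land in $l_1$.

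Concretely, first I would use the integral representation of $\tilde\varphi$ from Proposition~\ref{transform_varphi}:
$$
\tilde\varphi(t, \la) = \cos\rho t + \int_0^t \tilde{\mathcal K}_1(t, s)\cos\rho s\,ds,
$$
so that
$$
\tilde\varphi_{n0}(t) - \tilde\varphi_{n1}(t) = \bigl(\cos\rho_{n0}t - \cos\rho_{n1}t\bigr) + \int_0^t \tilde{\mathcal K}_1(t, s)\bigl(\cos\rho_{n0}s - \cos\rho_{n1}s\bigr)\,ds.
$$
Using the elementary bound $|\cos a - \cos b| \le |a - b|\max(1, e^{|\operatorname{Im} a|}, e^{|\operatorname{Im} b|})$ (or simply $|\cos\rho_{n0}t - \cos\rho_{n1}t| \le |\rho_{n0} - \rho_{n1}|\,t\cdot C \le C\delta_n$ on the bounded strip where all $\rho_{n0}, \rho_{n1}$ lie, which follows from \eqref{la_asymp} and $\delta \le \delta_0$), one gets $|\tilde\varphi_{n0}(t) - \tilde\varphi_{n1}(t)| \le C\delta_n\bigl(1 + \|\tilde{\mathcal K}_1(t,\cdot)\|_{L_1(0,t)}\bigr) \le C\delta_n g(t)$ for some fixed $g \in L_2(0,\pi)$ (using $\sup_t\|\tilde{\mathcal K}_1(t,\cdot)\|_{L_2(0,t)} < \infty$). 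Then Cauchy--Schwarz gives
$$
|(q_{n0}-q_{n1})(x)| \le \int_0^\pi |F(t)|\,|\tilde\varphi_{n0}(t) - \tilde\varphi_{n1}(t)|\,dt \le C\delta_n \|F\|_{L_2}\|g\|_{L_2} \le C\delta_n\|f\|_m.
$$
Summing over $n$ and invoking $\{\delta_n\}\in l_2 \subset l_1$... wait: $l_2 \not\subset l_1$, so a single factor of $\delta_n$ is not enough. This is the main obstacle, and it must be handled more carefully: one needs a \emph{second} factor that is summable in the $l_2$-sense.

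To get that second factor, instead of crudely bounding the whole integral, I would expand $F(t)$ in the Riesz basis $\{\tilde\varphi(t, \tilde\rho_k)\}$ via Lemma~\ref{rz_varphi} and track that the quantity $\int_0^x F(t)(\tilde\varphi_{n0}(t) - \tilde\varphi_{n1}(t))\,dt$ is itself, up to the trigonometric remainder, essentially the $n$-th Fourier-type coefficient of $F$ \emph{differenced at two nearby nodes}. More precisely: write $\tilde\varphi_{n0}(t) - \tilde\varphi_{n1}(t) = (\rho_{n0} - \rho_{n1})\,\partial_\rho\tilde\varphi(t, \rho)\big|_{\rho = \xi_n}$ by the mean value theorem (componentwise in real/imaginary parts), so $(q_{n0}-q_{n1})(x) = (\rho_{n0}-\rho_{n1})\int_0^x F(t)\,\partial_\rho\tilde\varphi(t,\xi_n)\,dt$, and then show that $\bigl\{\int_0^x F(t)\,\partial_\rho\tilde\varphi(t,\xi_n)\,dt\bigr\}_n \in l_2$ with $l_2$-norm $\le C\|F\|_{L_2} \le C\|f\|_m$ — this is again an application of the Riesz-basis property of cosines (now of $\{t\sin\xi_n t\}$ after differentiating $\cos\rho t$, which is a Riesz basis in $L_2(0,\pi)$ with the required $\rho = n + \varkappa$ asymptotics — the standard fact behind Proposition~\ref{rz_cos}), plus the integral representation for $\partial_\rho\tilde\varphi$. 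Finally, Hölder for sequences gives
$$
\sum_{n=1}^\infty |(q_{n0}-q_{n1})(x)| \le \sum_{n=1}^\infty |\rho_{n0}-\rho_{n1}|\cdot\Bigl|\int_0^x F(t)\,\partial_\rho\tilde\varphi(t,\xi_n)\,dt\Bigr| \le \|\{\delta_n\}\|_{l_2}\cdot C\|f\|_m \le C\|f\|_m,
$$
where the constant absorbs $\|\{\delta_n\}\|_{l_2} \le$ (something controlled by $\delta_0$ and $\tilde L$). The uniformity in $x \in [0,\pi]$ comes for free because every estimate above was uniform in $x$ (it only ever used $\|F_{[0,x]}\|_{L_2} \le \|F\|_{L_2}$). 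The hard part, to reiterate, is precisely \emph{not} losing a factor: one must extract $\delta_n$ as a genuine difference-of-nearby-nodes gain while keeping the remaining sequence in $l_2$, which forces the mean-value/derivative argument together with the Riesz-basis estimate for the differentiated cosine system, rather than a naive triangle-inequality bound.
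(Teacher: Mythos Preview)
Your strategy is essentially the paper's: start from $q_{ni}(x)=\int_0^x F(t)\tilde\varphi_{ni}(t)\,dt$, pull a factor $\zeta_n=\rho_{n0}-\rho_{n1}$ out of $\tilde\varphi_{n0}-\tilde\varphi_{n1}$, show the remaining coefficients lie in $l_2$ via a Riesz-basis/Fourier-coefficient estimate, and finish with Cauchy--Schwarz for sequences against $\{\delta_n\}\in l_2$.

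The one genuine soft spot is the mean value theorem step. Since $\rho_{n0},\rho_{n1}$ are complex, there is no single intermediate point $\xi_n$; applying the MVT ``componentwise in real/imaginary parts'' produces two different intermediate arguments, so writing $\partial_\rho\tilde\varphi(t,\xi_n)$ with one $\xi_n$ and then invoking a Riesz-basis property for the single system $\{t\sin(\xi_n t)\}$ is not rigorous as stated. The paper avoids this by the explicit first-order expansion
\[
\cos(\rho_{n0}t)-\cos(\rho_{n1}t)=-\zeta_n\,t\sin(\rho_{n1}t)+O(\zeta_n^2),
\]
so the leading term is evaluated at the \emph{fixed} node $\rho_{n1}=\tilde\rho_n$ (whose asymptotics are exactly those required for Proposition~\ref{rz_cos}/Lemma~\ref{rz_varphi}), and the $O(\zeta_n^2)$ tail is summed directly using $\sum_n\delta_n^2<\infty$. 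This yields $|(q_{n0}-q_{n1})(x)|\le|\delta_n k_n(x)|+|s_n(x)|$ with $\|\{k_n(x)\}\|_{l_2}\le C\|F\|_{L_2}$ and $|s_n(x)|\le C\delta_n^2\|F\|_{L_2}$, and the $l_1$-bound follows. Once you replace the MVT step by this expansion (or, equivalently, by the integral form $\tilde\varphi_{n0}-\tilde\varphi_{n1}=\zeta_n\int_0^1\partial_\rho\tilde\varphi(t,\rho_{n1}+s\zeta_n)\,ds$ and estimate uniformly in $s$), your argument is complete and coincides with the paper's.
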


\begin{proof}
Denote $\zeta_n := \rho_{n0} - \rho_{n1}$ and consider the difference
\begin{align}\label{difcos}
\cos(\rho_{n0}t) - \cos(\rho_{n1}t) & = - 2 \sin (\rho_{n1} t) \sin \frac{\zeta_n t}{2} = - \zeta_n t \sin (\rho_{n1}t) + O(\zeta_n^2).
\end{align}
In view of \eqref{defden}, we have $|\zeta_n| \le \delta_n$.
Therefore, using Lemma~\ref{lem_qt}, Proposition~\ref{transform_varphi} and \eqref{difcos}, we get
\begin{align}\notag
|(q_{n0} - q_{n1})(x)| & = \bigg|\int\limits_0^x {F(t)(\tilde\varphi_{n0}(t)-\tilde\varphi_{n1}(t))dt}\bigg|\\ \notag
& = \bigg|\zeta_n\int\limits_0^x {F(t)\bigg(-t\sin(\rho_{n1}t)-\int\limits_0^t{ \mathcal{K}_1(t, u)u\sin(\rho_{n1}u) \, du}\bigg)dt}\bigg| + O(\zeta_n^2)\bigg|\int\limits_0^x {F(t) dt} \bigg| \\ \notag
& = |\delta_nk_n(x)| + |s_n(x)|,
\end{align}
where
$$
|s_n(x)| \le C\delta_n^2\|F\|_{L_2}, \quad \bigg(\sum\limits_{n=1}^\infty|k_n(x)|^2\bigg)^{\frac{1}{2}} \le C\|F\|_{L_2}
$$
uniformly by $x \in [0,\pi]$.

Then
$$
\sum\limits_{n=1}^\infty|q_{n0} - q_{n1}(x)| \le \sum\limits_{n=1}^\infty|\delta_nk_n(x)| + \sum\limits_{n=1}^\infty|s_n(x)| \le C\|F\|_{L_2}.
$$

This together with the estimate \eqref{estF} yield the claim.
\end{proof}

\section{Local solvability and stability} \label{sec:solvstab}

In this section, we prove Theorem~\ref{stability_thm} on local solvability and stability of Inverse Problem~\ref{ip:main3}. The proof consists of several lemmas.
The method of the proof is based on the main equation \eqref{main_equation} and on the reconstruction formulas \eqref{sigma_series}--\eqref{r2_series}. First, we construct the main equation by the data satisfying the hypothesis of Theorem~\ref{stability_thm} and prove the unique solvability of the main equation. Second, we investigate the properties of the solution $\psi(x)$. Third, by using the  entries of $\psi(x)$, we construct the functions $\sigma(x)$, $r_1(\la)$, and $r_2(\la)$. Next, the convergence of the series for $\sigma(x)$ and $r_1(\la)$ is proved. However, there arise difficulties with the series for $r_2(\la)$, because the obtained estimates are insufficiently precise to prove its convergence. In order to overcome these difficulties, we construct approximations $\sigma^K(x)$, $r_1^K(\la)$, and $r_2^K(\la)$ by the ``truncated'' spectral data $\{ \la_n, \alpha_n\}_{n = 1}^K$ and apply the Cauchy criterion to the sequence $\{ r_2^K\}$. Finally, a problem $L = L(\sigma, r_1, r_2)$ is constructed, and it is proved that $\{ \la_n, \alpha_n\}_{n \ge 1}$ are its spectral data.

Suppose that the boundary value problem $\tilde L$ fulfills the hypothesis of Theorem~\ref{stability_thm}. Let $\{ \la_n, \alpha_n\}_{n \ge 1}$ be arbitrary complex numbers satisfying the inequality \eqref{estde} for some $\delta_0 > 0$. In view of \eqref{estde} and \eqref{defden}, we have
\begin{equation} \label{dede}
\delta = \left( \sum_{n = 1}^{\infty} \delta_n^2 \right)^{1/2}.
\end{equation}

Using \eqref{defQ} and \eqref{Hmatr}, construct the operator $\tilde H(x)$.

\begin{lem}\label{main_equation_thm_solvability}
If $\delta_0 > 0$ is sufficiently small, then the operator $(E + \tilde H(x))$ has a bounded inverse operator in $m$ for each fixed $x \in [0,\pi]$.
Therefore, equation \eqref{main_equation} is uniquely solvable.
\end{lem}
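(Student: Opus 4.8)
The plan is to show that $(E+\tilde H(x))$ is invertible by establishing that the operator $\tilde H(x)$ is, in a suitable sense, a small perturbation of the identity when $\delta_0$ is small. The standard route here is the Fredholm alternative: since $\tilde H(x)$ has the structure $T\tilde Q(x)T^{-1}$ and Lemma~\ref{lem_qt} tells us that $\tilde Q(x)T^{-1}$ maps $m$ into $l_2$ boundedly and uniformly in $x$, the operator $\tilde H(x)$ is compact on $m$ (an operator that factors through $l_2$ into $m$, with the $l_2\hookrightarrow m$ embedding being compact after composing with the action of $T$, or more precisely $\tilde H(x)$ is a limit in operator norm of finite-rank operators). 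Hence it suffices to prove that the homogeneous equation $(E+\tilde H(x))\beta = 0$ has only the trivial solution $\beta = 0$ in $m$, and then bootstrap to a uniform bound on $(E+\tilde H(x))^{-1}$.

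First I would make the norm estimate explicit. Combining Lemma~\ref{lem_qt} with the factorization $\tilde H(x) = T\tilde Q(x)T^{-1}$ and the observation that $T$ acts on the second components trivially while mixing the first components via $\chi_n$, I expect an estimate of the form $\|\tilde H(x)\beta\|_m \le C(\text{something involving }\delta)\|\beta\|_m$. The key point is that each off-diagonal block contribution carries a factor $\delta_k$ or comes from differences $\tilde\varphi_{k0} - \tilde\varphi_{k1}$ that are $O(\delta_k)$, so that — using $\{\delta_n\}\in l_2$ and $\delta = \|\{\delta_n\}\|_{l_2} \le \delta_0$ — one gets $\|\tilde H(x)\|_{m\to m} \le C\delta_0$ uniformly in $x\in[0,\pi]$. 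Once $C\delta_0 < 1$, the Neumann series $\sum_{j\ge 0}(-\tilde H(x))^j$ converges in the operator norm of $m$, giving directly that $(E+\tilde H(x))^{-1}$ exists, is bounded by $(1-C\delta_0)^{-1}$, and this bound is uniform in $x$. Then equation~\eqref{main_equation} is uniquely solvable with $\psi(x) = (E+\tilde H(x))^{-1}\tilde\psi(x)$.

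The main obstacle is precisely obtaining the operator-norm bound $\|\tilde H(x)\|_{m\to m}\le C\delta_0$ rather than merely $\|\tilde H(x)\|_{m\to m}\le C$ (the latter is what Theorem~\ref{main_equation_thm} already gives). The difficulty is that the $l_2\to m$ embedding is not norm-decreasing in a way that produces smallness by itself; the smallness must come entirely from the structure of $\tilde Q(x)$ and $T$. Concretely, when we expand $(\tilde H(x)\beta)_{ni}$, the diagonal-in-$k$ terms and the terms multiplied by $\chi_k$ inside $T^{-1}$ get multiplied back by $\delta_k$ (from the first row of $T$ or from $\chi_k^{-1}$ cancellations), while the genuinely dangerous terms are those of the form $\chi_n$ times a difference $\tilde Q_{n0;kj} - \tilde Q_{n1;kj}$; here one uses the analogue of the computation in the unnamed lemma at the end of Section~\ref{sec:maineq} showing that such differences are $O(\delta_n)$, so the factor $\chi_n$ is absorbed. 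Carrying this bookkeeping through carefully — tracking which entries of the product $T\tilde Q(x)T^{-1}$ carry a $\delta_n$ or $\delta_k$ and which get one from a $\cos\rho_{n0}t - \cos\rho_{n1}t$ difference, then summing via Cauchy–Schwarz against $\{\delta_n\}\in l_2$ — is the technical heart of the argument. An alternative, if a clean $C\delta_0$ bound is awkward to extract for all of $\tilde H(x)$, is to split $\tilde H(x) = \tilde H_0(x) + \tilde H_1(x)$ where $\tilde H_0$ is a fixed compact operator (built from $\tilde L$ alone) with $E + \tilde H_0$ invertible, and $\tilde H_1$ has small norm; then $E+\tilde H(x) = (E+\tilde H_0(x))(E + (E+\tilde H_0(x))^{-1}\tilde H_1(x))$ is invertible for $\delta_0$ small. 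Either way, once invertibility is established pointwise in $x$ with a uniform bound, the lemma follows.
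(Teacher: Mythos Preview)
Your core approach---the Neumann series via an operator-norm bound $\|\tilde H(x)\|_{m\to m}\le C\delta$---is exactly what the paper does; the paper simply cites the Schur-type estimate $\|\tilde H(x)\|_{m\to m}\le C\sup_n\sum_k \delta_k/(|n-k|+1)$ from \cite{BondTamkang} and applies Cauchy--Schwarz to get $\|\tilde H(x)\|\le C\delta$, then takes $\delta_0=\tfrac{1}{2C}$. Your Fredholm/compactness detour is unnecessary (and the parenthetical claim that the embedding $l_2\hookrightarrow m$ yields compactness is not correct as stated), but since you abandon it in favor of the norm estimate anyway, the argument stands.
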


\begin{proof}	
For operator $\tilde H(x)$, the following estimate holds (see \cite{BondTamkang}):
$$
\|\tilde H(x)\|_{m \to m} \le C \sup_n\sum_k\frac{\delta_k}{|n-k|+1}.
$$
Using the Cauchy-Bunyakovsky inequality and \eqref{dede}, we conclude that $\| \tilde H(x) \| \le C \delta$.
So, there exists $\delta_0>0$ such that, for each $\delta \le \delta_0$, we have $\|\tilde H(x)\| \le \dfrac{1}{2}$.
Indeed, since $C$ is a fixed constant, one can take $\delta_0 = \dfrac{1}{2C}$. This yields the claim.
\end{proof}

Suppose that $\delta_0$ satisfies Lemma~\ref{main_equation_thm_solvability}. Then, we can consider the solution $\psi(x)$ of the main equation \eqref{main_equation} and study its properties.

\begin{lem}\label{estimates}
Let $\psi(x)$ be the solution of the main equation \eqref{main_equation}. Then the entries of $T^{-1}\psi(x)$ can be represented in the form $\varphi_{ni}(x) = \cos(n-M_1-1)x + g_{ni}(x)$, $(n, i) \in V$, where the functions $g_{ni}(x)$ are continuous on $[0, \pi]$, the sequence $\{g_{ni}(x)\}_{(n, i) \in V}$ belongs to $l_2$ for each fixed $x \in [0, \pi]$, and $\|\{g_{ni}(x)\}\|_{l_2}$ is uniformly bounded for $x \in [0, \pi]$. Moreover,
\begin{enumerate}
\item $\|\psi(x)\| \le C$,
\item $\|g(x)\|_{l_2} \le C$,
\item $\|g(x) - \tilde g(x)\|_{l_2} \le C\delta$,
\item $\| \{g_{n0}(\pi) - g_{n1}(\pi)\} \|_{l_1} \le C\delta$,
\item $\| \{g_{n0}(x) - g_{n1}(x) - \tilde g_{n0}(x) + \tilde g_{n1}(x)\} \|_{l_1} \le C\delta$, 
\item $\| \Theta(x) \|_{L_2} \le C$,
\item $\| \Theta(x) - \tilde \Theta(x)\|_{L_2} \le C\delta$,
\end{enumerate}
where $\Theta(x) = \sum\limits_{k=1}^\infty (g_{k0} - g_{k1})\cos((k-M_1-1)x)$.
\end{lem}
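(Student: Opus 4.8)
The statement packages together seven quantitative properties of the solution $\psi(x)$ of the main equation \eqref{main_equation}. The unifying principle is: first obtain the representation $\varphi_{ni}(x) = \cos(n-M_1-1)x + g_{ni}(x)$ from the structure of the main equation, then push the bound $\|\tilde H(x)\| \le C\delta$ from Lemma~\ref{main_equation_thm_solvability} through each formula. I would proceed as follows. Since $(E+\tilde H(x))^{-1}$ is bounded by $2$ (Lemma~\ref{main_equation_thm_solvability}), from $\psi(x) = (E+\tilde H(x))^{-1}\tilde\psi(x)$ and the uniform bound $\|\tilde\psi(x)\|_m \le C$ (which follows from Theorem~\ref{main_equation_thm} applied to $\tilde L$ together with the asymptotics of $\{\tilde\varphi_{ni}\}$) we immediately get item (1). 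Applying $T^{-1}$ and using $\varphi(x) = T^{-1}\psi(x)$, the top entry $\varphi_{n0}$ and bottom entry $\varphi_{n1}$ inherit boundedness; to extract the cosine term I would subtract the model equation $\tilde\psi(x) = (E+\tilde H(x))\tilde\psi_{\text{model}}$... more precisely, the key observation is $\tilde\varphi(x,\lambda_{ni}) = \cos\rho_{ni}x + (\text{lower order})$ by Proposition~\ref{transform_varphi}, and the asymptotics $\rho_{ni} = n - M_1 - 1 + O(\varkappa_n)$ let me replace $\cos\rho_{ni}x$ by $\cos(n-M_1-1)x$ at the cost of an $l_2$-error. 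So I would define $g_{ni}(x) := \varphi_{ni}(x) - \cos(n-M_1-1)x$ and verify that $g(x) = \varphi(x) - (\cos(n-M_1-1)x)_{(n,i)\in V}$ lies in $l_2$ uniformly, giving item (2).

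For items (3)--(5), the idea is to compare with the analogous objects for the model problem, for which $\tilde g(x)$ plays the role of $g(x)$: one writes $\psi(x) - \tilde\psi(x) = (E+\tilde H(x))^{-1}(\tilde\psi(x) - (E+\tilde H(x))\tilde\psi(x)) = -(E+\tilde H(x))^{-1}\tilde H(x)\tilde\psi(x)$, which is $O(\delta)$ in $m$; applying $T^{-1}$ and subtracting cosines gives $\|g(x)-\tilde g(x)\|_{l_2} \le C\delta$, i.e.\ item (3). Item (4) is the more delicate one: the difference $g_{n0}(\pi) - g_{n1}(\pi) = \varphi_{n0}(\pi) - \varphi_{n1}(\pi) - (\cos\rho_{n0}\pi - \cos\rho_{n1}\pi) + O(\varkappa_n^2)$, and here I would invoke exactly the mechanism of the (unnumbered) last lemma of Section~\ref{sec:maineq}: the $(n,0)$ and $(n,1)$ rows of the operator $\tilde Q(x)T^{-1}$ differ by a factor $\zeta_n = \rho_{n0}-\rho_{n1}$ with $|\zeta_n|\le\delta_n$, because of the $\chi_n$-normalization built into $T$; this converts an $l_2$-sequence into an $l_1$-sequence with norm $\le C\delta$. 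The same factorization, applied to $\psi(x)-\tilde\psi(x)$ rather than $\psi(x)$, yields item (5).

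For items (6) and (7), once item (4)/(5) gives $\{g_{k0}-g_{k1}\} \in l_1$ (hence a fortiori in $l_2$, uniformly bounded, and the differenced version is $O(\delta)$ in $l_1\subset l_2$), I would apply Proposition~\ref{rz_cos} — or rather the plain Riesz-basis statement for $\{\cos(k-M_1-1)x\}$ after discarding finitely many indices — to conclude that $\Theta(x) = \sum_k (g_{k0}-g_{k1})\cos((k-M_1-1)x)$ lies in $L_2(0,\pi)$ with $\|\Theta\|_{L_2} \le C\|\{g_{k0}-g_{k1}\}\|_{l_2} \le C$, giving item (6); and $\Theta(x)-\tilde\Theta(x) = \sum_k(g_{k0}-g_{k1}-\tilde g_{k0}+\tilde g_{k1})\cos((k-M_1-1)x)$ has $L_2$-norm $\le C\|\{g_{k0}-g_{k1}-\tilde g_{k0}+\tilde g_{k1}\}\|_{l_2} \le C\delta$ by item (5), giving item (7).

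**Main obstacle.** The routine estimates (items (1)--(3), (6)--(7)) are straightforward Neumann-series / Riesz-basis bookkeeping. The genuinely delicate point is items (4) and (5): the naive estimate only gives $\{g_{n0}(\pi)-g_{n1}(\pi)\} \in l_2$, which is not enough — one needs $l_1$ summability. The resolution must exploit that the transform $T$ was designed precisely so that the first row $(\chi_n,-\chi_n)$ produces a difference $\varphi_{n0}-\varphi_{n1}$ carrying an extra gain of $\delta_n$ (equivalently, dividing by $\chi_n = \delta_n^{-1}$ in the right places), and one must track this gain through the inverse operator $(E+\tilde H(x))^{-1}$ and through the $T^{-1}$ that recovers $\varphi$ from $\psi$ — checking that the cross terms in $T\tilde Q T^{-1}$ do not destroy it. This is exactly the content of the last lemma of Section~\ref{sec:maineq}, so the work here is to re-run that argument for the solution $\psi$ (and for $\psi-\tilde\psi$) rather than for a generic $f\in m$, which requires first knowing item (1)/(3) to control $\|\psi\|_m$ and $\|\psi-\tilde\psi\|_m$. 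Hence the logical order: (1), (2), (3) first, then (4), (5) using the $\zeta_n$-factorization, then (6), (7) by Riesz-basis estimates.
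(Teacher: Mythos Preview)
Your plan is essentially the same as the paper's proof and is correct in outline. The paper's central identity is exactly the one you implicitly reach: from the main equation one gets
\[
g(x) = \tilde g(x) - \tilde Q(x)T^{-1}\psi(x)
\]
(this is equation \eqref{main_eq_g} in the paper), and items (2)--(5) are read off from this using Lemma~\ref{lem_qt} and the unnamed last lemma of Section~\ref{sec:maineq}, in the logical order you describe; items (6)--(7) then follow by the Riesz-basis estimate of Proposition~\ref{rz_cos}, again as you say.

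One point to tighten: in your treatment of item (3) you write $\psi - \tilde\psi = -(E+\tilde H)^{-1}\tilde H\tilde\psi$, bound it by $C\delta$ in $m$, and then ``apply $T^{-1}$'' to pass to the $l_2$ bound on $g - \tilde g$. That last step does not work as stated, since $T^{-1}$ by itself does not map $m$ into $l_2$ (the entry $(T^{-1}f)_{n1} = f_{n1}$ is merely bounded). The correct route --- which the paper takes --- is to use the main equation in the form $\psi - \tilde\psi = -\tilde H\psi$, so that
\[
g - \tilde g \;=\; T^{-1}(\psi - \tilde\psi) \;=\; -T^{-1}\tilde H\psi \;=\; -\tilde Q\,T^{-1}\psi
\]
directly; then Lemma~\ref{lem_qt} (whose proof actually gives the sharper bound $\|\tilde Q(x)T^{-1}\|_{m\to l_2}\le C\delta$, since every term of the auxiliary function $F$ there carries a factor $\delta_k$ or $\alpha_{k0}-\alpha_{k1}$ or $\tilde\varphi_{k0}-\tilde\varphi_{k1}$) yields item (3) immediately. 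You already invoke this exact mechanism for items (4)--(5), so it is only a matter of applying it one step earlier rather than detouring through the $m$-norm. With that adjustment, your proof matches the paper's.
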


\begin{proof}
By Lemma~\ref{main_equation_thm_solvability}, there exists the operator $\tilde P(x) = (E+\tilde H(x))^{-1}$, which is bounded for each fixed $x \in [0, \pi]$. In particular, $\|\tilde P(x_0)\| < \infty$, $x_0 \in [0,\pi]$. It can be shown similarly to Lemma~4.5 in \cite{BondTamkang} that $\tilde H(x)$ is continuous at $x = x_0$.
Hence, there exists $\varepsilon > 0$ such that, for each $x \in [0, \pi]$ satisfying $|x - x_0| \le \varepsilon$, we have
$$
\|\tilde H(x_0) - \tilde H(x)\| \le \frac{1}{\|\tilde P(x_0)\|}.
$$

Then, we obtain
$$
\tilde P(x) - \tilde P(x_0) = \sum\limits_{k=1}^\infty \big( \tilde H(x_0) - \tilde H(x) \big)^k \big( \tilde P(x_0) \big)^{k+1},
$$
and
$$
\|\tilde P(x) - \tilde P(x_0)\|\le2\|\tilde P(x_0)\|^2\|\tilde H(x_0) - \tilde H(x)\| \xrightarrow{x\to x_0} 0.
$$
So, $\psi(x) = \tilde P(x) \tilde \psi(x)$ is continuous by $x$. Hence $\|\psi(x)\| \le C$.

Denote
$$
g_{ni}(x) = \varphi_{ni}(x) - \cos(n-M_1-1)x, \quad g(x) = (g_n(x))_{n \ge 1}, \quad g_n(x) = 
	\begin{pmatrix}
	g_{n0}(x)\\
	g_{n1}(x)
	\end{pmatrix}.
$$

Using \eqref{defH}, we get from \eqref{main_equation} that
\begin{gather}\label{main_eq_g}
g(x) = \tilde g(x) - T^{-1}\tilde H(x)\psi(x) = \tilde g(x) - \tilde Q(x) T^{-1} \psi(x).
\end{gather}

It follows from Proposition~\ref{transform_varphi} and the asymptotics \eqref{la_asymp} that $\|\tilde g(x)\|_{l_2} \le C$. By virtue of Lemma~\ref{lem_qt}, the operator $\tilde Q(x) T^{-1}$ acts from $m$ to $l_2$ for each fixed $x \in [0,\pi]$ and $\| \tilde Q(x) T^{-1} \|_{m \to l_2} \le C$. Consequently, we obtain from \eqref{main_eq_g} and the estimate $\| \psi(x) \|_m \le C$ that $\| g(x)\|_{l_2} \le C$.

The following estimates are obtained with the same type of calculations. We demonstrate it for the estimate 4. Using \eqref{main_eq_g}, we get
\begin{align*}
\|\{ g_{n0}(\pi) - g_{n1}(\pi)\}\|_{l_1} = & \sum\limits_k|g_{k0}(\pi) - g_{k1}(\pi)| \le \|\{\tilde g_{n0}(\pi) - \tilde g_{n1}(\pi)\}\|_{l_1} \\
& + \sum\limits_k\bigg|\big(\tilde Q(\pi)T^{-1}\psi(\pi)\big)_{k1} - (\tilde Q(\pi)T^{-1}\psi(\pi)\big)_{k0}\bigg|.
\end{align*}

Using Proposition~\ref{transform_varphi} and \eqref{difcos}, we can get
\begin{equation} \label{smg}
\| \{ \tilde g_{n0}(\pi) - \tilde g_{n1}(\pi) \}\|_{l_1} = \sum\limits_k|\tilde\varphi_{k0}(\pi) - \tilde\varphi_{k1}(\pi)| = \sum_k |\zeta_k \xi_k| + O(\delta_k^2),
\end{equation}
where
$$
\xi_k := \pi\sin(\rho_{k1}\pi) + \int\limits_0^\pi {\mathcal K_1(\pi, t) t\sin(\rho_{k1}t)}\,dt.
$$
In view of the eigenvalue asymptotics \eqref{la_asymp}, we conclude that $\{ \xi_k \} \in l_2$. Therefore, using \eqref{dede}, \eqref{smg}, and $|\zeta_k| \le \delta_k$, we obtain $\| \{ \tilde g_{n0}(\pi) - \tilde g_{n1}(\pi) \}\|_{l_1} \le C \delta$.

Due to \eqref{fk}, \eqref{QF}, \eqref{estF}, and \eqref{difcos}, we get
\begin{align*}
\sum\limits_k\bigg|\big(\tilde Q(\pi)T^{-1}\psi(\pi)\big)_{k1} - (\tilde Q(\pi)T^{-1}\psi(\pi)\big)_{k0}\bigg| \le \sum\limits_k \int\limits_0^\pi|F(t)(\tilde\varphi_{k0}(t) - \tilde\varphi_{k0}(t))|dt \\
= \sum\limits_k \int\limits_0^\pi|F(t)(-\zeta_k t\sin(\rho_{k1}t) + O(\delta_k^2))|dt \le C\delta.
\end{align*}

Hence $\| \{ g_{n0}(\pi) - g_{n1}(\pi) \} \|_{l_1} \le C \delta$.
The other estimates can be proved similarly.
\end{proof}

For the next calculations, introduce the notations
\begin{align*}
&\Pi_N(\la)=\prod_{k = 1}^{N} \frac{\la - \la_{k0}}{\la - \la_{k1}},\\
&\mathcal S_N(\la) = \sum_{k=1}^N \sum_{j=0}^1 \frac{(-1)^j\alpha_{kj}\varphi_{kj}(\pi)(\tilde r_1(\la)\tilde\varphi^{[1]}_{kj}(\pi) + \tilde r_2(\la)\tilde\varphi_{kj}(\pi))}{\la - \la_{kj}},
\end{align*}
where $N \in \mathbb N \cup \{ \infty \}$.

Using the functions $\{ \varphi_{ni}(x)\}_{(n,i) \in V}$ of Lemma~\ref{estimates}, we construct $\sigma(x)$ and $r_1(\lambda)$ as the series \eqref{sigma_series} and \eqref{r1_series}, respectively. Lemmas~\ref{lem:conv} and \eqref{lem:sigma:r1} establish the convergence of these series.

\begin{lem} \label{lem:conv}
The series \eqref{sigma_series} converges  in $L_2(0, \pi)$. Moreover, $\| \sigma(x) - \tilde \sigma(x) \|_{L_2} \le C \delta$.
\end{lem}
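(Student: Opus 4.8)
The plan is to substitute the representation $\varphi_{kj}(x)=\cos\bigl((k-M_1-1)x\bigr)+g_{kj}(x)$, $\tilde\varphi_{kj}(x)=\cos\bigl((k-M_1-1)x\bigr)+\tilde g_{kj}(x)$ from Lemma~\ref{estimates} into \eqref{sigma_series}, expand the products $\tilde\varphi_{kj}\varphi_{kj}$, and use $\cos^2\theta=\tfrac12+\tfrac12\cos2\theta$ to cancel the leading $\tfrac12$. Writing $w_k(x):=\cos\bigl((k-M_1-1)x\bigr)$, this yields $\sigma(x)-\tilde\sigma(x)=-2\bigl(\mathcal A(x)+\mathcal B(x)+\mathcal C(x)\bigr)$ with
\begin{align*}
\mathcal A(x)&=\tfrac12\sum_{k\ge1}(\alpha_{k0}-\alpha_{k1})\cos\bigl(2(k-M_1-1)x\bigr),\\
\mathcal B(x)&=\sum_{k\ge1}w_k(x)\bigl(\alpha_{k0}(g_{k0}+\tilde g_{k0})-\alpha_{k1}(g_{k1}+\tilde g_{k1})\bigr)(x),\\
\mathcal C(x)&=\sum_{k\ge1}\bigl(\alpha_{k0}g_{k0}\tilde g_{k0}-\alpha_{k1}g_{k1}\tilde g_{k1}\bigr)(x).
\end{align*}
In every summand I will isolate one factor drawn from a sequence that is small, i.e.\ of $l_2$- or $l_1$-norm $O(\delta)$ --- one of $\{\alpha_{k0}-\alpha_{k1}\}=\{\alpha_k-\tilde\alpha_k\}$ (norm $\le\delta$ by \eqref{estde}, \eqref{dede}), $\{(g_{kj}-\tilde g_{kj})(x)\}$ (norm $\le C\delta$ by item~3 of Lemma~\ref{estimates}), $\{(g_{k0}-g_{k1})(x)\}$ (which equals $\{(g_{k0}-g_{k1}-\tilde g_{k0}+\tilde g_{k1})(x)\}+\{(\tilde g_{k0}-\tilde g_{k1})(x)\}$, the first $l_1$-small by item~5, the second $l_2$-small by \eqref{difcos} and Proposition~\ref{transform_varphi} applied to $\tilde L$), or $\{(\tilde g_{k0}-\tilde g_{k1})(x)\}$ itself --- and pair it, via the Cauchy--Bunyakovsky inequality, against one of the uniformly $l_2$-bounded sequences $\{g_{kj}(x)\}$, $\{\tilde g_{kj}(x)\}$ (item~2 of Lemma~\ref{estimates} and the bound $\|\tilde g(x)\|_{l_2}\le C$), $\{\varkappa_k^0\}$, $\{\tilde\varkappa_k^0\}$ (by \eqref{alpha_asymp}).

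Concretely, for $\mathcal A$ the system $\{\cos(2(k-M_1-1)x)\}_{k\ge1}$ is, apart from the finitely many indices $k\le M_1+1$ and coinciding frequencies, orthogonal --- hence a Bessel system --- in $L_2(0,\pi)$, so the series converges in $L_2$ and $\|\mathcal A\|_{L_2}\le C\|\{\alpha_k-\tilde\alpha_k\}\|_{l_2}\le C\delta$. For $\mathcal C$, the telescoping identity $\alpha_{k0}g_{k0}\tilde g_{k0}-\alpha_{k1}g_{k1}\tilde g_{k1}=\alpha_{k0}g_{k0}(\tilde g_{k0}-\tilde g_{k1})+\alpha_{k0}\tilde g_{k1}(g_{k0}-g_{k1})+(\alpha_{k0}-\alpha_{k1})g_{k1}\tilde g_{k1}$, together with $|\alpha_{kj}|\le C$, $\sup_k|\alpha_{k0}-\alpha_{k1}|\le\delta$, and the norm bounds above, gives $\sum_k\sup_{x\in[0,\pi]}|(\text{summand of }\mathcal C)(x)|\le C\delta$; hence $\mathcal C$ converges absolutely and uniformly with $\|\mathcal C\|_{L_2}\le C\delta$. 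For $\mathcal B$, write $\alpha_{kj}=\tfrac2\pi+\varkappa_{kj}^0$ with $\varkappa_{k0}^0=\varkappa_k^0$, $\varkappa_{k1}^0=\tilde\varkappa_k^0$; the $\varkappa^0$-part telescopes exactly as $\mathcal C$ and is $O(\delta)$ in $L_\infty$, while the remaining part equals $\tfrac2\pi\bigl(\Theta(x)+\tilde\Theta(x)\bigr)$, where $\Theta$ is as in Lemma~\ref{estimates} and $\tilde\Theta(x)=\sum_{k\ge1}(\tilde g_{k0}-\tilde g_{k1})(x)\cos\bigl((k-M_1-1)x\bigr)$.

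The main obstacle is the term $\Theta+\tilde\Theta$: Lemma~\ref{estimates} provides $\|\Theta-\tilde\Theta\|_{L_2}\le C\delta$ (item~7) but only $\|\Theta\|_{L_2}\le C$ (item~6), so neither directly gives what is needed; I therefore establish separately the estimate $\|\tilde\Theta\|_{L_2}\le C\delta$, after which $\|\Theta+\tilde\Theta\|_{L_2}\le\|\Theta-\tilde\Theta\|_{L_2}+2\|\tilde\Theta\|_{L_2}\le C\delta$, and adding up the three pieces shows that \eqref{sigma_series} converges in $L_2(0,\pi)$ with $\|\sigma-\tilde\sigma\|_{L_2}\le C\delta$. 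The bound on $\tilde\Theta$ is a pure model-problem computation: since $\tilde g_{k0}-\tilde g_{k1}=\tilde\varphi_{k0}(x)-\tilde\varphi_{k1}(x)$, Proposition~\ref{transform_varphi} for $\tilde L$ and \eqref{difcos} give $(\tilde g_{k0}-\tilde g_{k1})(x)=-\zeta_k\eta_k(x)+r_k(x)$, where $\zeta_k=\rho_{k0}-\rho_{k1}$ (so $\{\zeta_k\}\in l_2$, $\|\{\zeta_k\}\|_{l_2}\le\delta$, $|\zeta_k|\le\delta_k$), $\eta_k(x)=x\sin(\rho_{k1}x)+\int_0^x\tilde{\mathcal K}_1(x,t)\,t\sin(\rho_{k1}t)\,dt$ satisfies $\sup_{k,x}|\eta_k(x)|<\infty$, and $|r_k(x)|\le C\zeta_k^2$ uniformly. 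The remainder contributes $\bigl\|\sum_k r_k(x)\cos((k-M_1-1)x)\bigr\|_{L_\infty}\le C\sum_k\zeta_k^2\le\delta^2\le\delta_0\delta$, and in $-\sum_k\zeta_k\eta_k(x)\cos((k-M_1-1)x)$ one uses product-to-sum formulas to express $\eta_k(x)\cos((k-M_1-1)x)$ through $x\sin\bigl((2(k-M_1-1)+\tilde\varkappa_k)x\bigr)$, $x\sin(\tilde\varkappa_kx)$, and $\cos((k-M_1-1)x)$ times the Fourier-type coefficient $\int_0^x\tilde{\mathcal K}_1(x,t)\,t\sin(\rho_{k1}t)\,dt$; weighting each by $\{\zeta_k\}\in l_2$ and invoking that the occurring trigonometric systems ($\{\sin((2(k-M_1-1)+\tilde\varkappa_k)x)\}$, $\{\cos((k-M_1-1)x)\}$, $\{\sin(\rho_{k1}t)\}$) are Bessel systems in $L_2(0,\pi)$, one obtains $L_2$-norm $\le C\|\{\zeta_k\}\|_{l_2}\le C\delta$. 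The reason this last step is delicate --- as opposed to a routine application of Lemma~\ref{rz_varphi} --- is that the coefficients $\tilde g_{k0}-\tilde g_{k1}$ of the series $\tilde\Theta$ depend on $x$, coupling the summation index to the variable, so one must unfold the transformation-operator representation of $\tilde\varphi$ by hand rather than quote a Riesz-basis bound.
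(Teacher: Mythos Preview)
Your proof is correct and follows essentially the same route as the paper. The paper writes $\sigma-\tilde\sigma=-2S$ and splits $S=S_1+S_2+S_3+S_4$; your $\mathcal A$, $\mathcal C$, and $\mathcal B$ are, respectively, $S_1$, $S_2$, and $S_3+S_4$ (the paper's extra split of $\mathcal B$ into $S_3+S_4$ is precisely your telescoping step $\alpha_{k0}(\cdot)_{k0}-\alpha_{k1}(\cdot)_{k1}=\alpha_{k0}[(\cdot)_{k0}-(\cdot)_{k1}]+(\alpha_{k0}-\alpha_{k1})(\cdot)_{k1}$). The paper then simply asserts that items 2, 3, 5--7 of Lemma~\ref{estimates} yield $\|S_j\|_{L_2}\le C\delta$ and refers to \cite{BondTamkang} for details.

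Your treatment is more careful on one point that the paper glosses over: as you observe, items~6 and~7 of Lemma~\ref{estimates} give only $\|\Theta\|_{L_2}\le C$ and $\|\Theta-\tilde\Theta\|_{L_2}\le C\delta$, neither of which bounds $\|\Theta+\tilde\Theta\|_{L_2}$ by $C\delta$ --- yet this is exactly what is needed for the $\tfrac{2}{\pi}$-part of $S_3$. You close this by proving $\|\tilde\Theta\|_{L_2}\le C\delta$ directly from Proposition~\ref{transform_varphi} for $\tilde L$ and the expansion \eqref{difcos}; this is correct (the key special feature at $x=\pi$, namely $\sin(\rho_{k1}\pi)\in l_2$, is here replaced by your product-to-sum step, which converts $x\sin(\rho_{k1}x)\cos((k-M_1-1)x)$ into a Bessel-system term plus an $l_1$-controlled low-frequency term). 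In effect you have also shown that item~6 can be sharpened to $\|\Theta\|_{L_2}\le C\delta$, which is presumably what the paper's brief argument relies on implicitly.
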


\begin{proof}
The convergence can be proved in the same way as Lemma~5.4 in \cite{BondTamkang} by using the estimates 2, 3, 5--7 from Lemma~\ref{estimates}. 
Therefore, let us obtain the estimate. In view of \eqref{sigma_series}, we have $|\sigma(x) - \tilde\sigma(x)| = 2|S(x)|$, where
$$
S(x) = \sum\limits_{k=1}^\infty \bigg( \alpha_{k0}\tilde\varphi_{k0}(x)\varphi_{k0}(x) - \alpha_{k1}\tilde\varphi_{k1}(x)\varphi_{k1}(x) - \frac{1}{2}(\alpha_{k0} - \alpha_{k1})  \bigg).
$$
We can show that $S(x) = S_1(x) + S_2(x) + S_3(x) + S_4(x)$, where
\begin{align*}
&S_1(x) = \sum\limits_{k=1}^\infty (\alpha_{k0} - \alpha_{k1})\bigg(\cos^2(k-M_1-1)x - \frac{1}{2}\bigg),\\
&S_2(x) = \sum\limits_{k=1}^\infty (\alpha_{k0}\tilde g_{k0}(x)g_{k0}(x) - \alpha_{k1}\tilde g_{k1}(x)g_{k1}(x)), \\
&S_3(x) = \sum\limits_{k=1}^\infty \alpha_{k0}(\tilde g_{k0}(x) + g_{k0}(x) - \tilde g_{k1}(x) - g_{k1}(x))\cos(k-M_1-1)x,\\
&S_4(x) = \sum\limits_{k=1}^\infty (\alpha_{k0} - \alpha_{k1})(\tilde g_{k1}(x) + g_{k1}(x))\cos(k-M_1-1)x.
\end{align*}

Using the estimates 2, 3, 5--7 of Lemma~\ref{estimates}, we can get that $\|S_j\|_{L_2} \le C\delta$ for $x \in [0, \pi]$. This yields the claim.
\end{proof}

\begin{lem}\label{lem:sigma:r1}
The infinite product $\Pi_{\infty}(\la)$ and the series $S_{\infty}(\la)$ in \eqref{r1_series} converge absolutely and uniformly on compact sets $|\la| \le C$, $|\la - \la_{ni}| \ge \varepsilon$, $n \ge 1$, $i = 0,1$, $C, \varepsilon > 0$. Moreover, $|r_1(\la) - \tilde r_1(\la)| \le C\delta$.
\end{lem}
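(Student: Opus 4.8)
The plan is to treat the infinite product and the series separately, then combine. First I would handle the infinite product $\Pi_\infty(\la) = \prod_{k=1}^\infty \frac{\la - \la_{k0}}{\la - \la_{k1}}$. Writing each factor as $1 + \frac{\la_{k1} - \la_{k0}}{\la - \la_{k1}}$, the key point is that $|\la_{k0} - \la_{k1}| = |\rho_{k0}^2 - \rho_{k1}^2| = |\rho_{k0} + \rho_{k1}|\,|\rho_{k0} - \rho_{k1}| \le C k \delta_k$ by the asymptotics \eqref{la_asymp}, while on the indicated compact sets $|\la - \la_{k1}| \ge c k$ for $k$ large (since $\rho_{k1} \sim k$ and $|\la| \le C$). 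Hence the general term is $O(\delta_k)$, and since $\{\delta_k\} \in l_2 \subset l_1$... wait, $l_2 \not\subset l_1$; rather $\sum \delta_k^2 < \infty$ is not enough for $\sum \delta_k < \infty$. I must be more careful: actually one uses $\frac{k\delta_k}{k} = \delta_k$ and the Cauchy–Schwarz bound $\sum_{k} \delta_k \cdot \frac{1}{k}$... no. The honest route is: $\bigl|\frac{\la_{k1}-\la_{k0}}{\la-\la_{k1}}\bigr| \le \frac{Ck\delta_k}{ck} = C'\delta_k$, and although $\sum\delta_k$ need not converge, the product still converges because we may pair it with the series estimate; alternatively, and this is cleaner, note $\sum_k \delta_k^2 \le \delta_0^2$ suffices for absolute convergence of $\prod(1+a_k)$ once we also know $\sum_k |a_k|^2 < \infty$ together with convergence of $\sum a_k$ — so I would show $\sum_k \frac{\la_{k1}-\la_{k0}}{\la-\la_{k1}}$ converges uniformly on compacts, which follows because the partial sums differ from a fixed convergent series (built from $\tilde\la_k$) by a tail controlled by $\delta$. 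In practice the paper surely intends: $|a_k| \le C\delta_k$ with $\{\delta_k\}\in l_2$, hence $\sum|a_k|^2<\infty$, and $\sum a_k$ converges by comparison with Cauchy–Schwarz against $\{1/k\}\notin l_2$ — so the safe statement is that the product converges because its logarithm is a sum of $\log(1+a_k)$ with $\sum|a_k|^2<\infty$ and $\sum a_k$ convergent on compacts; I would state this and defer the two needed facts ($\{a_k\}\in l_2$ and convergence of $\sum a_k$) to the estimates above. This first step I expect to be the main technical obstacle, precisely because $l_2$-summability alone does not give absolute convergence of the product and one must argue convergence (not absolute convergence) of $\sum a_k$ carefully, using that the $\la_{ni}$ are a small perturbation of the fixed $\tilde\la_n$.

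Next I would handle the series $\mathcal S_\infty(\la)$. The $k$-th block is
\[
\sum_{j=0}^1 \frac{(-1)^j \alpha_{kj}\varphi_{kj}(\pi)\bigl(\tilde r_1(\la)\tilde\varphi^{[1]}_{kj}(\pi) + \tilde r_2(\la)\tilde\varphi_{kj}(\pi)\bigr)}{\la - \la_{kj}}.
\]
On a compact set avoiding the $\la_{ni}$ by $\varepsilon$, the denominators are bounded below by $\min(\varepsilon, ck)$; the polynomial factors $\tilde r_1(\la), \tilde r_2(\la)$ are bounded by a constant depending on the compact set; $\alpha_{kj}$ is bounded by \eqref{alpha_asymp}; $\varphi_{kj}(\pi) = \cos(k-M_1-1)\pi + g_{kj}(\pi)$ is bounded by Lemma~\ref{estimates}; and $\tilde\varphi^{[1]}_{kj}(\pi), \tilde\varphi_{kj}(\pi)$ are bounded (the quasi-derivative grows like $\rho$, so $\tilde\varphi^{[1]}_{kj}(\pi) = -\rho_{kj}\sin\rho_{kj}\pi + \ldots$, which is $O(k)$ — this will be cancelled against the $1/k$ from the denominator). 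The crucial cancellation is between the $j=0$ and $j=1$ terms of the block: writing them over the common denominator $(\la-\la_{k0})(\la-\la_{k1})$, the numerator telescopes into $\alpha_{k0}(\cdots)_{k0} - \alpha_{k1}(\cdots)_{k1}$, which is a difference of nearly-equal quantities and is therefore $O(\delta_k)$ (using $|\rho_{k0}-\rho_{k1}| \le \delta_k$, $|\alpha_{k0}-\alpha_{k1}| \le \delta_k$, the splitting $\varphi_{kj} = \cos + g_{kj}$, and estimate 5 of Lemma~\ref{estimates} for the $g$-differences), while the denominator is of order $k^2$. Hence the $k$-th block is $O(\delta_k/k^2)$ in absolute value... but again to get uniform convergence it suffices that the block is $\le C\delta_k \cdot \frac{1}{k}$ or similar with $\{\delta_k/k\}\in l_1$ by Cauchy–Schwarz, so $\sum_k |\text{block}_k| \le C\bigl(\sum_k\delta_k^2\bigr)^{1/2}\bigl(\sum 1/k^2\bigr)^{1/2} \le C\delta$. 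This simultaneously gives absolute and uniform convergence on the compact set and the bound $|\mathcal S_\infty(\la)| \le C\delta$.

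Finally, I would combine: $r_1(\la) = \Pi_\infty(\la)\bigl(\tilde r_1(\la) - \mathcal S_\infty(\la)\bigr)$. Since $\Pi_\infty(\la) \to 1$ uniformly on the compact set as the product converges (more precisely $|\Pi_\infty(\la) - 1| \le C\delta$ from the general-term estimate $O(\delta_k)$ summed via Cauchy–Schwarz against $1/k$, hence $\le C\delta$), and $\tilde r_1(\la)$ is a fixed bounded polynomial while $|\mathcal S_\infty(\la)| \le C\delta$, we get
\[
|r_1(\la) - \tilde r_1(\la)| = \bigl|\Pi_\infty(\la)(\tilde r_1(\la) - \mathcal S_\infty(\la)) - \tilde r_1(\la)\bigr| \le |\Pi_\infty(\la) - 1|\,|\tilde r_1(\la)| + |\Pi_\infty(\la)|\,|\mathcal S_\infty(\la)| \le C\delta,
\]
using that $|\Pi_\infty(\la)|$ is bounded on the compact set (bounded by $1 + C\delta \le 1 + C\delta_0$). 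The main obstacle, as noted, is rigorously justifying the convergence of $\Pi_\infty$ and of $\sum a_k$ given only $l_2$-control; everything else is a routine but lengthy bookkeeping of the boundedness of the spectral quantities $\alpha_{kj}$, $\varphi_{kj}(\pi)$, $\tilde\varphi_{kj}(\pi)$, $\tilde\varphi^{[1]}_{kj}(\pi)$ and exploitation of the pairwise cancellation in each block, for which I would invoke Proposition~\ref{transform_varphi}, the asymptotics \eqref{la_asymp}, \eqref{alpha_asymp}, and Lemma~\ref{estimates}.
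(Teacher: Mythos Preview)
Your overall structure — product, then series, then combine — matches the paper's proof, and the final combination step is identical. However, there is a concrete slip that creates all of your perceived ``main obstacle'': you bound the denominator by $|\la - \la_{k1}| \ge ck$, reasoning from $\rho_{k1}\sim k$. But $\la_{k1}=\rho_{k1}^2\sim k^2$, so on the compact set $|\la|\le C$ one actually has $|\la-\la_{k1}|\ge c k^2$ for large $k$. With this correct denominator, the $k$-th factor of the product satisfies
\[
\left|\frac{\la_{k1}-\la_{k0}}{\la-\la_{k1}}\right|\le \frac{Ck\delta_k}{ck^2}=\frac{C\delta_k}{k},
\]
and since $\sum_k \delta_k/k \le \bigl(\sum_k\delta_k^2\bigr)^{1/2}\bigl(\sum_k k^{-2}\bigr)^{1/2}\le C\delta$ by Cauchy--Schwarz, the infinite product converges \emph{absolutely} and $|\Pi_\infty(\la)-1|\le C\delta$ follows directly. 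The whole detour about conditional convergence of $\sum a_k$ and $\sum|a_k|^2$ is unnecessary.

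The same correction applies to $\mathcal S_\infty(\la)$: with $|\la-\la_{kj}|\ge ck^2$, reducing the $k$-th block to a common denominator produces $(\la-\la_{k0})(\la-\la_{k1})\ge ck^4$ downstairs, and the paper then telescopes the numerator into four pieces $\mathcal J^1,\dots,\mathcal J^4$, each of which is bounded by $C\delta_k/k$ or better (using, e.g., $|\la_{k0}-\la_{k1}|\le Ck\delta_k$, $|\tilde\varphi^{[1]}_{kj}(\pi)|\le Ck\tau_k$ with $\{\tau_k\}\in l_2$ from Proposition~\ref{transform_varphi} and the asymptotics \eqref{la_asymp}, and estimate~4 of Lemma~\ref{estimates}). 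Summing via Cauchy--Schwarz gives $|\mathcal S_\infty(\la)|\le C\delta$ with absolute convergence. Once you fix the $k^2$-versus-$k$ bound, your argument and the paper's coincide.
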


\begin{proof}
Let us start from the infinite product. Due to the asymptotics \eqref{la_asymp}, we can get:
$$
\bigg|1 + \frac{\la_{k1} - \la_{k0}}{\la - \la_{k1}}\bigg| = \bigg|1 + \frac{\zeta_k^2 - 2\rho_{k1}\zeta_k}{\la - \la_{k1}}\bigg| \le 1 + d_k(\la), \quad d_k(\la) := \frac{2|\rho_{k1}|\delta_k + \delta_k^2}{|\la - \la_{k1}|},
$$
so
$$
|\Pi_\infty(\la)| = \bigg|\prod_{k = 1}^{\infty} \bigg(1 + \frac{\la_{k1} - \la_{k0}}{\la - \la_{k1}}\bigg)\bigg| \le \prod_{k = 1}^{\infty} \left(1 + d_k(\la) \right).
$$

Suppose that $|\la| \le C$, $|\la - \la_{ki}| \ge \varepsilon$ for all $k \ge 1$, $i= 0, 1$. Then, in view of the asymptotics \eqref{la_asymp},
$|\la - \la_{k1}| \ge Ck^2$ and $2|\rho_{k1}| + \delta_k \le Ck$, because $\delta_k \le \delta$. Hence
\begin{gather} \label{dk}
d_k(\la) \le \frac{C\delta_k}{k},
\end{gather}
so the infinite product $\Pi_{\infty}(\la)$ converges uniformly by $\la$.

Let us obtain an estimate for this product. Taking the  logarithm, we get the series:
$$
\ln \prod_{k = 1}^{\infty} (1 + d_k(\la)) = \sum\limits_{k=1}^\infty \ln(1 + d_k(\la)).
$$
By the complex Taylor theorem, we obtain
$$
\ln(1 + d_k(\la)) = d_k(\la)\frac{1}{2\pi i}\oint_\gamma \frac{\ln(1+\omega)}{\omega(\omega + d_k(\la))}d\omega = R_0^k(\la),
$$
where $\gamma = re^{it}$, $r > 0$ is a fixed sufficiently small radius, $t \in [0, 2\pi]$. 
Note that
$$
\max\limits_{\omega \in \gamma}\bigg|\dfrac{\ln(1+\omega)}{\omega(\omega + d_k)}\bigg| \le C.
$$
So, due to \eqref{dk} and \eqref{dede}, we get:
$$
\sum\limits_{k=1}^\infty |\ln(1 + d_k(\la))| = \sum\limits_{k=1}^\infty |R_0^k(\la)| \le \sum\limits_{k=1}^\infty \frac{\delta_k}{k} \le C \delta.
$$
Thus $|\ln\Pi_\infty(\la)| \le C \delta$.
As
$$
\Pi_\infty(\la) = e^{\ln\Pi_\infty(\la)} = 1 + O(\ln\Pi_\infty(\la)),
$$
then
\begin{gather}\label{prod_est}
|\Pi_\infty(\la) - 1| \le C|\ln\Pi_\infty(\la)| \le C\delta.
\end{gather}

Now let us consider the series $\mathcal S_\infty(\la)$. Using the estimates $|\la| \le C$, $|\la - \la_{kj}| \ge Ck^2$, $j = 0, 1$, we can get:
$$
|\mathcal S_\infty(\la)| \le \sum\limits_{i=0}^1\sum\limits_{j=0}^1\mathcal S^{ij},
$$
where
\begin{align*}
&\mathcal S^{ij} = \sum_{k=1}^\infty \frac{\big|\la_{k0}^i\alpha_{k1}\varphi_{k1}(\pi)\tilde\varphi^{[j]}_{k1}(\pi) - \la_{k1}^i\alpha_{k0}\varphi_{k0}(\pi)\tilde\varphi^{[j]}_{k0}(\pi)\big|}{k^4}.
\end{align*}

Let us consider $\mathcal S^{11}$. The other terms can be treated in the same way. Using simple calculations, it can be shown that 
$$
\mathcal S^{11} \le \mathcal J^1 + \mathcal J^2 + \mathcal J^3 + \mathcal J^4,
$$
where
\begin{align*}
&\mathcal J^1 = \sum_k\frac{C}{k^4}|\la_{k1} - \la_{k0}||\alpha_{k0}\varphi_{k0}(\pi)\tilde\varphi^{[1]}_{k0}(\pi)|,\\
&\mathcal J^2 = \sum_k\frac{C}{k^4}|\la_{k0}||\alpha_{k0} - \alpha_{k1}||\varphi_{k0}(\pi)\tilde\varphi^{[1]}_{k0}(\pi)|,\\
&\mathcal J^3 = \sum_k\frac{C}{k^4}|\la_{k0}\alpha_{k1}||\varphi_{k0}(\pi) - \varphi_{k1}(\pi)||\tilde\varphi^{[1]}_{k0}(\pi)|,\\
&\mathcal J^4 = \sum_k\frac{C}{k^4}|\la_{k0}\alpha_{k1}\varphi_{k1}(\pi)|\tilde\varphi^{[1]}_{k0}(\pi) - \tilde\varphi^{[1]}_{k1}(\pi)|.
\end{align*}

The formulas \eqref{la_asymp}, \eqref{alpha_asymp}, \eqref{defden}, Proposition~\ref{transform_varphi}, and Lemma~\ref{estimates} imply the following estimates: 
$$
|\la_{k1} - \la_{k0}| \le Ck\delta_k, \quad |\alpha_{k0}| \le C, \quad |\varphi_{k0}(\pi)| \le C, \quad |\tilde \varphi_{k0}^{[1]}(\pi)| \le Ck\tau_k,
$$
where $\{ \tau_k\} \in l_2$.
Therefore, we have
$$
\mathcal J^1 \le \sum_k\frac{C\delta_k \tau_k}{k^2} \le C \delta.
$$

In the same way, using the estimate 4 from Lemma~\ref{estimates} and Proposition~\ref{transform_varphi}, we can get that $\mathcal J^j \le C\delta$, $j=\overline{1, 4}$, so $\mathcal S^{11} \le C\delta$.
Consequently, the series $\mathcal S_\infty(\la)$ converges on compact sets excluding the eigenvalues and 
\begin{gather}\label{sum_r1_est}
|\mathcal S_\infty(\la)| \le C\delta.
\end{gather}

Now let us get the estimate for $|r_1(\la) - \tilde r_1(\la)|$. It is follows from \eqref{r1_series} that
\begin{gather}\label{r1_short}
r_1(\la) = \Pi_\infty(\la)(\tilde r_1(\la) - \mathcal S_\infty(\la)).
\end{gather}

So,
\begin{align*}
|r_1(\la) - \tilde r_1(\la)| = |\Pi_\infty(\la)(\tilde r_1(\la) - \mathcal S_\infty(\la)) - \tilde r_1(\la)| \le |\Pi_\infty(\la) - 1| |\tilde r_1(\la) - \mathcal S_\infty(\la)| + |\mathcal S_\infty(\la)|.
\end{align*}
Substitung \eqref{prod_est} and \eqref{sum_r1_est} we get the required estimate.
\end{proof}

Thus, we have proved the convergence in the reconstruction formula~\eqref{r1_series} for $r_1(\lambda)$. However, we cannot directly obtain the similar results for $r_2(\lambda)$, since the formula \eqref{r2_series} contains the quasi-derivatives $\varphi^{[1]}_{kj}(\pi)$, for which we have no sufficiently precise estimates. In order to overcome this difficulty, we use the approximation approach.

For sufficiently large $K \in \mathbb N$, consider the following data $\{\la_n^K, \alpha_n^K\}_{n \ge 1}$:
\begin{gather} \label{trunc}
\la_n^K =
    \left\{\begin{array}{ll}
        \la_n, \quad & n \le K,\\
        \tilde \la_n, \quad & n > K,\\
    \end{array}\right.
    ,\quad
\alpha_n^K =
    \left\{\begin{array}{ll}
        \alpha_n, \quad & n \le K,\\
        \tilde \alpha_n, \quad & n > K.\\
    \end{array}\right.
\end{gather}

Similarly to $\tilde H(x)$, we introduce the operator $\tilde H^K(x) = (\tilde H^K_{u,v}(x))_{u,v \in V}$:
\begin{gather*}
\tilde H^K_{ni, kj}(x) = 
\left\{\begin{array}{ll}
        0, \quad & k > K,\\
        \tilde H_{ni, kj}(x), \quad & k \le K.\\
    \end{array}\right.
\end{gather*}

Relying on Lemma~\ref{main_equation_thm_solvability}, one can show that the operator $(E + \tilde H^K(x))$ has a bounded inverse for each fixed $x \in [0,\pi]$ if $K$ is sufficiently large. Let us pass to a finite system in the main equation \eqref{main_equation}. The equation
$$
\tilde \psi_{ni}(x) = \psi^K_{ni}(x) + \sum_{k=1}^{K} (\tilde H_{ni, k0}(x)\psi^K_{k0}(x) + \tilde H_{ni, k1}(x)\psi^K_{k1}(x)), \quad n = \overline{1, K}, i=0,1.
$$
is uniquely solvable for each fixed $x \in [0,\pi]$ due to the inversibility of operator $(E + \tilde H^K(x))$. 
Using $T^{-1}$, we can get
\begin{gather*}
\varphi^K_{n0}(x) = \delta_n\psi^K_{n0}(x)+\psi^K_{n1}(x), \quad
\varphi^K_{n1}(x) = \psi^K_{n1}(x), \quad n = \overline{1,K}.
\end{gather*}
So, we can obtain the relation similar to \eqref{series_varphi}:
\begin{equation} \notag
\varphi^K_{ni}(x) = \tilde \varphi_{ni}(x) - \sum_{k=1}^K (\tilde Q_{ni; k0}(x)\varphi^K_{k0}(x) - \tilde Q_{ni; k1}(x)\varphi^K_{k1}(x)), \quad n = \overline{1, K}, \quad i=0, 1.
\end{equation}

Let us construct the following finite approximation for $\sigma(x)$:
\begin{align}  \label{sigmaK}
	\sigma^K(x) = \tilde \sigma(x) - 2 \sum_{k=1}^K \sum_{j=0}^1 {(-1)^j\alpha_{kj}\Bigg(\tilde \varphi_{kj}(x)\varphi^K_{kj}(x) - \frac{1}{2}\Bigg)},
\end{align}
and define the corresponding quasi-derivative as $y^{[1]}(x) = y'(x) - \sigma^K(x)y(x)$.

For the next calculations, let us introduce the following notations:
\begin{align*}
&\mathcal S_N^K(\la) = \sum_{k=1}^N \sum_{j=0}^1 \frac{(-1)^j\alpha_{kj}\varphi^K_{kj}(\pi)(\tilde r_1(\la)\tilde\varphi^{[1]}_{kj}(\pi) + \tilde r_2(\la)\tilde\varphi_{kj}(\pi))}{\la - \la_{kj}},\\
&\mathcal U_N^K(\la) = - \tilde r_1(\la)\sum_{k=1}^N \sum_{j=0}^1 (-1)^j\alpha_{kj}\bigg(\tilde \varphi_{kj}(x)\varphi^K_{kj}(x) - 1\bigg)\\
& + \sum_{k = 1}^N \sum_{j=0}^1 \frac{(-1)^j\alpha_{kj}\varphi^{K[1]}_{kj}(\pi)(\tilde r_1(\la)\tilde\varphi^{[1]}_{kj}(\pi) + \tilde r_2(\la)\tilde\varphi_{kj}(\pi))}{\la - \la_{kj}}.
\end{align*}

Now let us consider finite approximations for $r_1(\la)$ and $r_2(\la)$:
\begin{align} \label{r1K}
	r_1^K(\la) & = \Pi_K(\la)(\tilde r_1(\la) - \mathcal S_K^K(\la)), \\ \label{r2K}
	r_2^K(\la) &= \Pi_K(\la)(\tilde r_2(\la) + \mathcal U_N^K(\la)).
\end{align}

\begin{lem} \label{lem:degree}
$r_1^K(\la)$ is a polynomial of degree $M_1$ with the leading coefficient $1$, and $r_2^K(\la)$ is a polynomial of degree not greater than $M_1$. In other words, $(r_1^K, r_2^K) \in \mathcal R$.
\end{lem}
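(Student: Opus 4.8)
The plan is to treat $r_1^K(\la)$ and $r_2^K(\la)$ as ratios of polynomials, to clear their denominators, and then to read off the degrees from the behaviour at $\la=\infty$. Write $\Pi_K(\la)=p_K(\la)/\tilde p_K(\la)$ with
\[
p_K(\la)=\prod_{k=1}^K(\la-\la_{k0}),\qquad \tilde p_K(\la)=\prod_{k=1}^K(\la-\la_{k1}),
\]
both monic of degree $K$; note that $\tilde p_K$ has the $K$ distinct roots $\tilde\la_1,\dots,\tilde\la_K$, because the eigenvalues of $\tilde L$ are simple. Put $P_1^K:=p_K\cdot(\tilde r_1-\mathcal S_K^K)$ and $P_2^K:=p_K\cdot(\tilde r_2+\mathcal U_K^K)$. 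I would show that $P_1^K$ and $P_2^K$ are polynomials of degree at most $M_1+K$, with $P_1^K$ monic of degree exactly $M_1+K$, and that $\tilde p_K$ divides each of them; then $r_j^K=P_j^K/\tilde p_K$ and the assertion follows.

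First I would verify that $P_1^K$ and $P_2^K$ are polynomials of the stated degrees. Since $\tilde\varphi^{[1]}_{kj}(\pi)$ and $\tilde\varphi_{kj}(\pi)$ do not depend on $\la$, the numerator $\tilde r_1(\la)\tilde\varphi^{[1]}_{kj}(\pi)+\tilde r_2(\la)\tilde\varphi_{kj}(\pi)$ is a polynomial in $\la$ of degree at most $M_1$, and for $j=1$ its value at $\la=\la_{k1}=\tilde\la_k$ equals $\tilde\Delta(\tilde\la_k)=0$ by \eqref{charfun} applied to $\tilde L$. Hence every $j=1$ summand of $\mathcal S_K^K$, and of the second sum of $\mathcal U_K^K$, is in fact a polynomial of degree at most $M_1-1$; the same vanishing also shows that a $j=0$ summand for which $\la_{k0}$ coincides with some $\tilde\la_m$ is regular. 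Consequently $\mathcal S_K^K$ and $\mathcal U_K^K$ are rational functions whose only finite poles are simple and lie among $\{\la_{k0}\}_{k=1}^K$, so multiplying by $p_K$ clears them and $P_1^K,P_2^K$ are polynomials. Letting $\la\to\infty$ gives $\mathcal S_K^K(\la)=O(\la^{M_1-1})$ and $\mathcal U_K^K(\la)=O(\la^{M_1})$ (its degree-$M_1$ part being a multiple of $\tilde r_1(\la)$); together with $\tilde r_1(\la)=\la^{M_1}+O(\la^{M_1-1})$ and $\deg\tilde r_2\le M_1$ this yields $\deg P_1^K=M_1+K$ with leading coefficient $1$ and $\deg P_2^K\le M_1+K$.

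Next I would prove $\tilde p_K\mid P_j^K$. As the roots $\tilde\la_1,\dots,\tilde\la_K$ of $\tilde p_K$ are distinct, it suffices to show $P_j^K(\tilde\la_n)=0$ for $n=1,\dots,K$. By the previous step $\mathcal S_K^K$ and $\mathcal U_K^K$ are regular at each $\tilde\la_n$; hence if $\tilde\la_n$ is a root of $p_K$ then $P_j^K(\tilde\la_n)=0$ automatically, while if $p_K(\tilde\la_n)\ne0$ it remains to show that the values of $\tilde r_1-\mathcal S_K^K$ and of $\tilde r_2+\mathcal U_K^K$ at $\tilde\la_n$ vanish — a finite analogue of Lemma~\ref{lem:cz}. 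I would get this by reproducing the proof of Lemma~\ref{lem:cz} with the finite relation
\[
\varphi^K_{ni}(x)=\tilde\varphi_{ni}(x)-\sum_{k=1}^K\bigl(\tilde Q_{ni;k0}(x)\varphi^K_{k0}(x)-\tilde Q_{ni;k1}(x)\varphi^K_{k1}(x)\bigr),\quad n\le K,
\]
in place of \eqref{series_varphi}: evaluate it at $x=\pi$, $i=1$ (using $\tilde D(\pi,\la_{n1},\la_{n1})=\int_0^\pi\tilde\varphi_{n1}^2(t)\,dt$), single out the $k=n$, $j=1$ term, whose singularity is removable and whose regularized value produces $\tilde r_1^{\langle1\rangle}(\tilde\la_n)$ and $\tilde r_2^{\langle1\rangle}(\tilde\la_n)$ via L'H\^opital, multiply through by $\tilde r_1(\tilde\la_n)/\tilde\varphi_{n1}(\pi)$, and invoke Lemma~\ref{lemforz} for the model problem $\tilde L$ — which is legitimate since $\tilde L$ has simple eigenvalues, $(\tilde r_1,\tilde r_2)\in\mathcal R$, and $\alpha_{n1}=\tilde\alpha_n$ by definition. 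The companion vanishing for $\tilde r_2+\mathcal U_K^K$ is obtained in the same fashion, using in addition the finite relation for $\varphi^{K[1]}_{ni}(\pi)$ obtained by differentiating the one above together with \eqref{sigmaK}.

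Combining the two steps, $r_1^K=P_1^K/\tilde p_K$ is a polynomial of degree $(M_1+K)-K=M_1$ with leading coefficient $1$, and $r_2^K=P_2^K/\tilde p_K$ is a polynomial of degree at most $M_1$, so $(r_1^K,r_2^K)\in\mathcal R$. The hard part will be the finite analogue of Lemma~\ref{lem:cz}: one has to carry the bookkeeping of that lemma through the truncated system and make the removable-singularity and L'H\^opital computations dovetail with Lemma~\ref{lemforz} for $\tilde L$. Possible coincidences among the $\la_{k0}$, or between some $\la_{k0}$ and some $\tilde\la_m$, are a secondary technical nuisance, handled by the observation that $\la_{k0}=\tilde\la_m$ forces $\tilde\Delta(\la_{k0})=0$ and thereby removes the would-be pole at that point.
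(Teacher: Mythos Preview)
Your proposal is correct and follows essentially the same route as the paper: write $r_j^K$ as a ratio of polynomials, count degrees, and invoke (a finite version of) Lemma~\ref{lem:cz} to show that the denominator divides the numerator. The only noteworthy difference is bookkeeping: the paper clears denominators more naively and ends up with $\prod_{k=1}^K(\la-\la_{k1})^2$ downstairs and a numerator of degree $2K+M_1$, then cancels $2K$ factors via Lemma~\ref{lem:cz}; you first observe that each $j=1$ summand is already a polynomial (since its numerator is $\tilde\Delta(\la)$ up to constants and vanishes at $\la_{k1}$), so only $p_K$ is needed to clear poles and only $\tilde p_K$ must be shown to divide the result. This is a mild but genuine simplification, and your explicit remark that one needs the \emph{finite} analogue of Lemma~\ref{lem:cz} (proved by running its argument with the truncated relation for $\varphi^K_{n1}(\pi)$ and Lemma~\ref{lemforz} for $\tilde L$) is more careful than the paper's bare citation of Lemma~\ref{lem:cz}.
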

\begin{proof}
Consider the series for $r_1^K(\la)$. Reduction to a common denominator implies
$$
r_1^K(\la) = \frac{\tilde r_1(\la) \prod\limits_{k=1}^K(\la-\la_{k0})(\la-\la_{k1}) - \sum\limits_{k=1}^{2K-1}\la^k(s_k\tilde r_1(\la)+t_k\tilde r_2(\la))}{\prod\limits_{k=1}^K(\la-\la_{k1})^2},
$$
where $t_k$, $s_k \in \mathbb C$.
As $(\tilde r_1, \tilde r_2) \in \mathcal R$, then this expression can be rewritten in the form:
$$
r_1^K(\la) = \frac{\sum\limits_{k=1}^{2K+M_1}\la^ks^*_k}{\prod\limits_{k=1}^K(\la-\la_{k1})^2},
$$
where $s^*_k \in \mathbb C$ and $s^*_{2K+M_1} = 1$. Due to Lemma~\ref{lem:cz}, we conclude that the fraction can be reduced and we get a polynomial of degree
$$
\mbox{deg}\,(r_1^K(\la)) = 2K+M_1-2K = M_1
$$
with the leading coefficient $1$.

In the same way we can prove that $\mbox{deg}\,(r_2^K(\la)) \le M_1$. So, $(r_1^K, r_2^K) \in \mathcal R$.
\end{proof}

Consider the boundary value problem $L^K := L(\sigma^K, r_1^K, r_2^K)$. Introducing the differential expression $\ell^Ky := -(y^{[1]})' - \sigma^K(x)y^{[1]} - (\sigma^K(x))^2y$, the problem $L^K$ can be represented as follows:
$$
	\ell^Ky = \la y, \quad x \in (0, \pi),
$$
$$
	y^{[1]}(0) = 0, \quad r_1^K(\la)y^{[1]}(\pi) + r_2^K(\la) y(\pi) = 0.
$$

\begin{lem} \label{lem:sol}
For $n \ge 1$, $i = 0,1$, the following relations hold:
\begin{gather*}
\ell^K\varphi^K_{ni}(x) = \la \varphi^K_{ni}(x), \quad x \in (0,\pi), \\
\varphi^K_{ni}(0) = 1, \quad \varphi^{K[1]}_{ni}(0) = 0.
\end{gather*}

Moreover, $\{\la^K_n, \alpha^K_n\}_{n \ge 1}$ are the spectral data for the boundary problem $L^K$.
\end{lem}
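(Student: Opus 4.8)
\emph{Plan of proof.} The strategy is to treat the finitely many modified data simultaneously. First solve the finite main equation (possible since $E+\tilde H^K(x)$ is invertible for $K$ large) to obtain $\{\varphi^K_{kj}(x)\}_{k\le K,\,j=0,1}$, and then define, for a complex parameter $\la$,
\[
\varphi^K(x,\la):=\tilde\varphi(x,\la)-\sum_{k=1}^K\bigl(\alpha_{k0}\tilde D(x,\la,\la_{k0})\varphi^K_{k0}(x)-\alpha_{k1}\tilde D(x,\la,\la_{k1})\varphi^K_{k1}(x)\bigr).
\]
For $n\le K$ this reproduces, at $\la=\la_{ni}$, the relation stated after the truncation, so $\varphi^K(x,\la_{ni})=\varphi^K_{ni}(x)$; for $n>K$ the same formula (with the relevant spectral parameter) \emph{defines} $\varphi^K_{ni}(x)$. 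Using $\partial_x\tilde D(x,\la,\mu)=\tilde\varphi(x,\la)\tilde\varphi(x,\mu)$ (a consequence of \eqref{wron}) and that $\sigma^K$ is a finite combination of $AC$ functions and $\tilde\sigma$, hence in $L_2(0,\pi)$, one checks that $\varphi^K(\cdot,\la)$ and its quasi-derivative $\varphi^{K[1]}(\cdot,\la):=(\varphi^K)'(\cdot,\la)-\sigma^K\varphi^K(\cdot,\la)$ lie in $AC[0,\pi]$.

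For the initial conditions I evaluate at $x=0$. Since $\tilde D(0,\cdot,\cdot)=0$ we get $\varphi^K(0,\la)=\tilde\varphi(0,\la)=1$; differentiating the defining formula once and using $\partial_x\tilde D(0,\la,\mu)=\tilde\varphi(0,\la)\tilde\varphi(0,\mu)=1$ and $\tilde\varphi^{[1]}(0,\la)=0$ gives $(\varphi^K)'(0,\la)=\tilde\sigma(0)-\sum_{k\le K}(\alpha_{k0}-\alpha_{k1})$; and \eqref{sigmaK} at $x=0$ (where $\tilde\varphi_{kj}(0)=\varphi^K_{kj}(0)=1$) gives precisely $\sigma^K(0)=\tilde\sigma(0)-\sum_{k\le K}(\alpha_{k0}-\alpha_{k1})$, so $\varphi^{K[1]}(0,\la)=(\varphi^K)'(0,\la)-\sigma^K(0)=0$.

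The heart of the argument is the differential equation. Differentiating the defining formula twice, using that $\tilde\varphi(\cdot,\la)$ solves equation \eqref{eqv} for $\tilde L$, the identities $\partial_x\tilde D=\tilde\varphi\tilde\varphi$ and \eqref{wron}, and the definition \eqref{sigmaK} of $\sigma^K$, one finds that the derivatives of the auxiliary quasi-derivatives $\varphi^{K[1]}_{kj}$, $k\le K$, appearing after differentiation can be eliminated through the same relations: the finite family $\{\varphi^K_{kj},\varphi^{K[1]}_{kj}\}_{k\le K}$ satisfies a closed first-order linear system which forces each $\varphi^K_{kj}$ to solve $\ell^K y=\la_{kj}y$. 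Hence $\varphi^K(x,\la)$ solves $\ell^K y=\la y$ for every $\la$, with $\varphi^K(0,\la)=1$ and $\varphi^{K[1]}(0,\la)=0$; evaluating at the relevant spectral parameters yields the first two relations of the lemma for all $n\ge1$, $i=0,1$. This bookkeeping --- handling the coupling among the $K$ triples and carrying the non-zero model problem $\tilde L$ --- is the main technical obstacle, and is the finite analogue of the computations behind the reconstruction formulas in \cite{ChitBond,BondTamkang}.

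It remains to identify the spectral data. Define $\Phi^K(x,\la):=\tilde\Phi(x,\la)-\sum_{k=1}^K\bigl(\alpha_{k0}\tilde E(x,\la,\la_{k0})\varphi^K_{k0}(x)-\alpha_{k1}\tilde E(x,\la,\la_{k1})\varphi^K_{k1}(x)\bigr)$; the same type of differentiation (now using $\tilde E'=\tilde\Phi\tilde\varphi$ together with the equation already proved for the $\varphi^K_{kj}$) shows $\ell^K\Phi^K=\la\Phi^K$ and $\Phi^{K[1]}(0,\la)=1$, while algebraic manipulation of this formula with \eqref{WeylsolBC} for $\tilde\Phi$ and the definitions \eqref{r1K}, \eqref{r2K} yields $\Phi^K(\pi,\la)=\dfrac{\tilde\Phi(\pi,\la)}{\tilde r_1(\la)\Pi_K(\la)}\,r_1^K(\la)$ and $\Phi^{K[1]}(\pi,\la)=-\dfrac{\tilde\Phi(\pi,\la)}{\tilde r_1(\la)\Pi_K(\la)}\,r_2^K(\la)$, hence $r_1^K(\la)\Phi^{K[1]}(\pi,\la)+r_2^K(\la)\Phi^K(\pi,\la)\equiv 0$. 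Thus $\Phi^K$ is the Weyl solution of $L^K$ and $M^K(\la):=\Phi^K(0,\la)$ its Weyl function; evaluating at $x=0$ with $\tilde E(0,\la,\mu)=-(\la-\mu)^{-1}$, $\varphi^K_{kj}(0)=1$, and $\tilde M(\la)=\sum_n\tfrac{\alpha_{n1}}{\la-\la_{n1}}$ gives $M^K(\la)=\sum_{k\le K}\tfrac{\alpha_{k0}}{\la-\la_{k0}}+\sum_{k>K}\tfrac{\alpha_{k1}}{\la-\la_{k1}}=\sum_{n\ge1}\tfrac{\alpha^K_n}{\la-\la^K_n}$. From $\psi^K=-\Delta^K\Phi^K$ and $r_1^K=\psi^K(\pi,\cdot)$ one obtains $\Delta^K(\la)=\Pi_K(\la)\tilde\Delta(\la)$; since $\tilde\Delta$ has simple zeros exactly at $\{\tilde\la_n\}$ and, for $\delta_0$ small, the numbers $\{\la^K_n\}$ are pairwise distinct and $\{\alpha^K_n\}$ are nonzero (here $\inf_n|\tilde\alpha_n|>0$), the zeros of $\Delta^K$ are precisely the simple numbers $\{\la^K_n\}$, which are therefore exactly the eigenvalues of $L^K$. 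Matching residues at these simple poles in the two expressions for $M^K$ shows the weight numbers of $L^K$ equal $\{\alpha^K_n\}$, and also rules out a common zero of $r_1^K$ and $r_2^K$, so $L^K$ is indeed a problem of form \eqref{eqv}--\eqref{bc}. Hence $\{\la^K_n,\alpha^K_n\}_{n\ge1}$ are the spectral data of $L^K$.
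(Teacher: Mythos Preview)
Your proposal is correct and is precisely the ``direct calculations'' the paper declines to spell out: the paper's entire proof reads \emph{``This lemma is proved by direct calculations''}, and what you have written is the standard spectral-mappings verification (defining $\varphi^K(x,\la)$ and $\Phi^K(x,\la)$ as the finite truncations of \eqref{series_varphi}--\eqref{series_phi}, checking the ODE and initial data via $\partial_x\tilde D=\tilde\varphi\tilde\varphi$, $\partial_x\tilde E=\tilde\Phi\tilde\varphi$ and \eqref{sigmaK}, then reading off $M^K$ from $\tilde E(0,\la,\mu)=-(\la-\mu)^{-1}$ and $\Delta^K=\Pi_K\tilde\Delta$), exactly as in \cite{ChitBond,BondTamkang}. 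There is no methodological difference; your write-up simply supplies the details the authors suppress.
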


This lemma is proved by direct calculations.

\begin{lem} \label{coeffcon}
$\sigma^K(x) \to \sigma(x)$ in $L_2(0, \pi)$ and	
$r^K_1(\la) \to r_1(\la)$ uniformly on compact sets as $K \to \infty$, where $\sigma(x)$ and $r_1(\la)$ are defined by \eqref{sigma_series} and \eqref{r1_series}, respectively.
\end{lem}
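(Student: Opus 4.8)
The plan is to reduce the whole statement to the convergence $\psi^K(x)\to\psi(x)$ in $m$ of the solutions of the truncated main equations, and then propagate this through the reconstruction formulas \eqref{sigma_series} and \eqref{r1_series}. The finite system written after \eqref{trunc} is equivalent to the equation $\tilde\psi(x)=(E+\tilde H^K(x))\psi^K(x)$ in $m$ (the rows with $n>K$ merely determining the remaining entries of $\psi^K(x)$), so $\psi^K(x)=(E+\tilde H^K(x))^{-1}\tilde\psi(x)$. Since $\tilde H(x)-\tilde H^K(x)$ has only the columns with index $k>K$ nonzero, the operator-norm bound used in the proof of Lemma~\ref{main_equation_thm_solvability} gives $\|\tilde H(x)-\tilde H^K(x)\|_{m\to m}\le C(\sum_{k>K}\delta_k^2)^{1/2}=:C\varepsilon_K\to 0$ as $K\to\infty$, uniformly in $x$. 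Hence for $K$ large $(E+\tilde H^K(x))$ is invertible with $\|(E+\tilde H^K(x))^{-1}\|_{m\to m}\le C$, and, using that $\tilde\psi(x)$ is uniformly bounded in $m$, one gets $\|\psi^K(x)\|_m\le C$ and $\|\psi^K(x)-\psi(x)\|_m\le C\varepsilon_K$, uniformly in $x\in[0,\pi]$.

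Next I would transfer this to the functions $g_{ni}$ of Lemma~\ref{estimates}. Applying $T^{-1}$ to the two main equations gives $g(x)=\tilde g(x)-\tilde Q(x)T^{-1}\psi(x)$ and $g^K(x)=\tilde g(x)-\tilde Q^K(x)T^{-1}\psi^K(x)$, where $\tilde Q^K(x)$ is $\tilde Q(x)$ with the columns $k>K$ deleted. By the proof of Lemma~\ref{lem_qt}, the entries of $g^K(x)-g(x)$ equal $\int_0^x G_K(t)\tilde\varphi_{ni}(t)\,dt$ with $G_K=F_{\psi-\psi^K}+F^{>K}_{\psi^K}$, where $F_{\,\cdot\,}$ is the function of \eqref{QF}--\eqref{estF}, so that $\|F_{\psi-\psi^K}\|_{L_2}\le C\varepsilon_K$, while the tail $F^{>K}_{\psi^K}(t)=\sum_{k>K}(\delta_k\psi^K_{k0}\alpha_{k0}\tilde\varphi_{k0}(t)+\psi^K_{k1}(\alpha_{k0}\tilde\varphi_{k0}(t)-\alpha_{k1}\tilde\varphi_{k1}(t)))$ satisfies $\|F^{>K}_{\psi^K}\|_{L_2}\le C\varepsilon_K$, by the splitting $\alpha_{k0}\tilde\varphi_{k0}-\alpha_{k1}\tilde\varphi_{k1}=(\alpha_{k0}-\alpha_{k1})\tilde\varphi_{k0}+\alpha_{k1}(\tilde\varphi_{k0}-\tilde\varphi_{k1})$, \eqref{difcos}, Lemma~\ref{rz_varphi}, and $\|\psi^K(x)\|_m\le C$. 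Thus $\|G_K\|_{L_2}\le C\varepsilon_K$, and Lemma~\ref{rz_varphi} yields $\|g^K(x)-g(x)\|_{l_2}\le C\varepsilon_K$; repeating the argument of the last lemma of Section~\ref{sec:maineq} with $G_K$ in place of $F$ and using \eqref{difcos} gives moreover $\|\{(g^K_{k0}-g^K_{k1}-g_{k0}+g_{k1})(x)\}\|_{l_1}\le C\varepsilon_K$. All these bounds are uniform in $x\in[0,\pi]$.

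The convergence $r_1^K(\la)\to r_1(\la)$ on the compacts of Lemma~\ref{lem:sigma:r1} then follows from \eqref{r1_short} and \eqref{r1K}. Indeed, $\Pi_K(\la)\to\Pi_\infty(\la)$ because $|\Pi_\infty(\la)/\Pi_K(\la)-1|\le\exp(\sum_{k>K}|\ln(1+d_k(\la))|)-1\le C\sum_{k>K}\delta_k/k\to 0$ by \eqref{dk}; and $\mathcal S_K^K(\la)-\mathcal S_\infty(\la)$ equals minus the tail $\sum_{k>K}\sum_j(\cdots)$ --- which tends to $0$ since $\mathcal S_\infty(\la)$ converges by Lemma~\ref{lem:sigma:r1} --- plus $\sum_{k=1}^K\sum_j(-1)^j\alpha_{kj}(g^K_{kj}(\pi)-g_{kj}(\pi))\dfrac{\tilde r_1(\la)\tilde\varphi^{[1]}_{kj}(\pi)+\tilde r_2(\la)\tilde\varphi_{kj}(\pi)}{\la-\la_{kj}}$, which, by $|\tilde\varphi^{[1]}_{kj}(\pi)|\le Ck\tau_k$ with $\{\tau_k\}\in l_2$, $|\la-\la_{kj}|\ge Ck^2$ on those compacts, and the $l_2$-bound above, is at most $C\|\{\tau_k\}\|_{l_2}\,\|\{g^K_{kj}(\pi)-g_{kj}(\pi)\}\|_{l_2}\le C\varepsilon_K\to 0$. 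For $\sigma^K(x)\to\sigma(x)$ in $L_2$, comparing \eqref{sigma_series} with \eqref{sigmaK} and using $\varphi^K_{kj}-\varphi_{kj}=g^K_{kj}-g_{kj}$ gives $\sigma^K(x)-\sigma(x)=-2\sum_{k=1}^K\sum_{j}(-1)^j\alpha_{kj}\tilde\varphi_{kj}(x)(g^K_{kj}(x)-g_{kj}(x))+2\sum_{k>K}\sum_j(-1)^j\alpha_{kj}(\tilde\varphi_{kj}(x)\varphi_{kj}(x)-\tfrac{1}{2})$; the second sum is the tail of the $L_2$-convergent series \eqref{sigma_series} (cf. Lemma~\ref{lem:conv}), hence $\to 0$ in $L_2$, and in the first sum I substitute $\tilde\varphi_{kj}(x)=\cos(k-M_1-1)x+\tilde g_{kj}(x)$ and $\alpha_{kj}=\tfrac{2}{\pi}+\varkappa^0_{kj}$ (with $\{\varkappa^0_{kj}\}\in l_2$ by \eqref{alpha_asymp}, \eqref{estde}) and split into the piece with factor $\tilde g_{kj}(x)$, the piece with factor $\varkappa^0_{kj}$ --- both bounded pointwise by $C\varepsilon_K$ via the Cauchy--Bunyakovsky inequality and the uniform $l_2$-bounds for $\{\tilde g_{kj}(x)\}$, $\{\varkappa^0_{kj}\}$, $\{g^K_{kj}(x)-g_{kj}(x)\}$ --- and the remainder $\tfrac{2}{\pi}\sum_{k=1}^K\cos(k-M_1-1)x\,(g^K_{k0}-g^K_{k1}-g_{k0}+g_{k1})(x)$, bounded pointwise by $\tfrac{2}{\pi}\|\{(g^K_{k0}-g^K_{k1}-g_{k0}+g_{k1})(x)\}\|_{l_1}\le C\varepsilon_K$; all uniformly in $x$, so $\|\sigma^K(x)-\sigma(x)\|_{L_2}\to 0$.

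I expect the main obstacle to be this last remainder term: since $\{\cos(k-M_1-1)x\}_k$ is not an $l_2$-sequence, a direct Cauchy--Bunyakovsky estimate for $\sum_k\cos(k-M_1-1)x\,(g^K_{kj}(x)-g_{kj}(x))$ is unavailable, so one must pass to the difference over $j=0,1$ and exploit the $O(\delta_k)$-cancellation in $\tilde\varphi_{k0}-\tilde\varphi_{k1}$ (equation \eqref{difcos}), i.e. re-establish an $l_1$-estimate of the type of the last lemma of Section~\ref{sec:maineq} for the small function $G_K$. A secondary technical point is to carry the uniformity in $x\in[0,\pi]$ through every step, so that the pointwise bounds really yield $L_2$-convergence; this is routine given the uniform bounds already recorded in Lemmas~\ref{lem_qt} and~\ref{estimates}.
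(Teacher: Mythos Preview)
Your proposal is correct and follows essentially the same route as the paper. The paper introduces $\theta^K_{ni}(x)=\varphi_{ni}(x)-\varphi^K_{ni}(x)$ (which equals $g_{ni}(x)-g^K_{ni}(x)$), asserts $\|\theta^K(x)\|_{l_2}\to 0$ and $\|\{\theta^K_{n0}-\theta^K_{n1}\}\|_{l_1}\to 0$ uniformly in $x$ by referring to \cite{BondTamkang}, handles $\sigma^K\to\sigma$ by reference to \cite{ChitBond}, and then treats $r_1^K\to r_1$ exactly as you do via $\Pi_K\to\Pi_\infty$ and $\mathcal S_K^K-\mathcal S_\infty\to 0$; you have simply filled in the details of those referenced arguments (and obtained the quantitative rate $C\varepsilon_K$ along the way).
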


\begin{proof}
The convergence $\sigma^K(x) \to \sigma(x)$ in $L_2(0, \pi)$ can be proved similarly to \cite{ChitBond}. 
Let us introduce the following vector function $\theta^K(x) = (\theta^K_{ni}(x))_{(n,i)\in V}$:
\begin{gather*}
\theta_{ni}^K(x) = \left\{\begin{array}{ll}
\varphi_{ni}(x) - \varphi_{ni}^K(x), \quad & n \le K, \\
0, \quad &  n > K.
\end{array}\right.
\end{gather*}

Analogously to \cite{BondTamkang}, we prove that
\begin{equation} \label{thetaK}
\lim_{K \to \infty}\|\theta^K(x)\|_{l_2} = 0, \quad \lim_{K \to \infty} \| \{ \theta_{n0}^K(x) - \theta_{n1}^K(x) \} \|_{l_1} = 0, 
\end{equation}
uniformly by $x \in [0, \pi]$.

Now let us consider the difference using \eqref{r1_short} and \eqref{r1K}:
\begin{align*}
|r_1(\la) - r_1^K(\la)| = & | \Pi_K(\la)| \Bigg| \frac{\Pi_\infty(\la)}{\Pi_K(\la)} ( \tilde r_1(\la) - \mathcal S_\infty(\la) )  - \tilde r_1(\la) + \mathcal S_K^K(\la)\Bigg|.
\end{align*}

Passing to the limit as $K \to \infty$ and taking the convergence of the infinite product $\Pi_\infty(\la)$ into account, we obtain
\begin{align*}
\lim\limits_{K \to \infty }|r_1(\la) - r_1^K(\la)| = & g(\la) \lim\limits_{K \to \infty} \Bigg| \sum\limits_{k=1}^K \sum\limits_{j=0}^1 \frac{(-1)^j\alpha_{kj}\theta^K_{kj}(\pi)\big(\tilde r_1(\la)\tilde\varphi^{[1]}_{kj}(\pi) + \tilde r_2(\la)\tilde\varphi_{kj}(\pi) \big)}{\la - \la_{kj}} \\
& + \mathcal S_\infty(\la) - \mathcal S_K(\la)\Bigg|,
\end{align*}
where $g(\la) = \Bigg| \lim\limits_{K\to \infty} \Pi_K(\la) \Bigg| < \infty$ on the compact sets $|\la| \le C$, $|\la - \la_{k1}| \ge \varepsilon$.
Due to \eqref{thetaK} and Lemma~\ref{lem:sigma:r1}, we obtain that
$$
\lim_{K \to \infty}|r_1(\la) - r_1^K(\la)| = 0.
$$
\end{proof}

For $r_2(\la)$, we have no analog of Lemma~\ref{lem:sigma:r1}. Therefore, we prove that the convergence of the polynomial sequence $\{ r_2^K(\la)\}$ by using the Cauchy criterion. 

\begin{lem}\label{lem:fundamental}
$\{r_2^k(\la)\}$ is a Cauchy sequence in the sense of the uniform convergence by $\la$ on compact sets.
\end{lem}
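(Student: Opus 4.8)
\emph{Plan.}
Fix a compact set $G \subset \mathbb C \setminus \{\la_{ni}\}_{n\ge1,\,i=0,1}$. I would prove that for every $\varepsilon>0$ there is a $K_0$ with $\sup_{\la\in G}|r_2^L(\la)-r_2^K(\la)|<\varepsilon$ whenever $L>K\ge K_0$. Writing $\mathcal U^K:=\mathcal U^K_K$ and using \eqref{r2K},
\begin{equation*}
r_2^L-r_2^K=(\Pi_L-\Pi_K)(\tilde r_2+\mathcal U^L)+\Pi_K(\mathcal U^L-\mathcal U^K),
\end{equation*}
and I split $\mathcal U^K=-\tilde r_1 A^K+B^K$ into the ``non-quasi-derivative part'' $A^K=\sum_{k=1}^K\sum_{j=0}^1(-1)^j\alpha_{kj}(\tilde\varphi_{kj}(\pi)\varphi^K_{kj}(\pi)-1)$ and the ``quasi-derivative part'' $B^K=\sum_{k=1}^K\sum_{j=0}^1\frac{(-1)^j\alpha_{kj}\varphi^{K[1]}_{kj}(\pi)(\tilde r_1(\la)\tilde\varphi^{[1]}_{kj}(\pi)+\tilde r_2(\la)\tilde\varphi_{kj}(\pi))}{\la-\la_{kj}}$. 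Everything then reduces to uniform estimates on $\Pi_K$, $A^K$ and $B^K$.

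\emph{The easy pieces.}
From \eqref{dk} and $\{\delta_k/k\}\in l_1$, the partial products $\Pi_K$ are bounded on $G$ uniformly in $K$ (as in the proof of Lemma~\ref{lem:sigma:r1}) and $\Pi_L-\Pi_K=\Pi_K\bigl(\prod_{k=K+1}^L(1+O(\delta_k/k))-1\bigr)\to0$ uniformly on $G$ as $K\to\infty$. For $A^K$, since $\cos^2((k-M_1-1)\pi)=1$, substituting $\varphi^K_{kj}(\pi)=\cos((k-M_1-1)\pi)+g^K_{kj}(\pi)$ makes the leading terms cancel, and $A^K$ is then estimated exactly as the series for $\sigma(x)$ in Lemma~\ref{lem:conv}, using for $\{g^K_{kj}\}$ the analogues of the estimates 2--5 of Lemma~\ref{estimates} together with the convergence \eqref{thetaK}; this yields $\sup_K\sup_G|A^K|<\infty$ and $\sup_G|A^L-A^K|\to0$. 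Hence the remaining task is to control $B^K$.

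\emph{The crux: the quasi-derivative part.}
The difficulty is that $\varphi^{K[1]}_{kj}(\pi)=O(k)$ with no obvious summability over $k$. Since $\sigma^K\in L_2(0,\pi)$, Proposition~\ref{transform_varphi} applies to the problem $L^K$ and gives $\varphi^{K[1]}_{kj}(\pi)=-\rho_{kj}\sin(\rho_{kj}\pi)+\rho_{kj}\int_0^\pi\mathcal K_2^K(\pi,t)\sin(\rho_{kj}t)\,dt+\mathcal C^K(\pi)$. By Lemma~\ref{coeffcon} the family $\{\sigma^K\}$ is bounded and convergent in $L_2(0,\pi)$, so (from the construction of the transformation operators in \cite{BondTamkang}) $\sup_K\|\mathcal K_2^K(\pi,\cdot)\|_{L_2}<\infty$, $\sup_K|\mathcal C^K(\pi)|<\infty$, and $\mathcal K_2^K(\pi,\cdot)$ converges in $L_2(0,\pi)$. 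Using $\rho_{kj}=k-M_1-1+\varkappa_{kj}$ with $\{\varkappa_{k0}\}\in l_2$ (from \eqref{estde}) and $\{\varkappa_{k1}\}\in l_2$ (from \eqref{la_asymp} for $\tilde L$), and expanding $\sin(\rho_{kj}\pi)$ and $\sin(\rho_{kj}t)$, one gets
\begin{equation*}
|\varphi^{K[1]}_{kj}(\pi)|\le Ck\tau^K_k,\qquad \sup_K\|\{\tau^K_k\}\|_{l_2}<\infty,\qquad \sup_K\sum_{k>N}(\tau^K_k)^2\xrightarrow{N\to\infty}0,
\end{equation*}
with $C$ independent of $K$. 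Combined with $|\la-\la_{kj}|\ge Ck^2$ on $G$, $|\alpha_{kj}|\le C$, $|\tilde\varphi_{kj}(\pi)|\le C$, and $|\tilde\varphi^{[1]}_{kj}(\pi)|\le Ck\tilde\tau_k$ ($\{\tilde\tau_k\}\in l_2$, as in Lemma~\ref{lem:sigma:r1}), each summand of $B^K$ is $O(\tau^K_k\tilde\tau_k+\tau^K_k/k)$, so $\sup_K\sup_G|B^K|<\infty$ by Cauchy--Schwarz. For $B^L-B^K$ with $L>K$ I split $\sum_{k=1}^{L}$ at a fixed $N$: the tails $\sum_{k>N}$ are bounded by $C\bigl(\sup_M\sum_{k>N}(\tau^M_k)^2\bigr)^{1/2}$, which is $<\varepsilon/3$ for $N$ large, uniformly in $L,K$; and, $N$ being fixed, the finite head $\sum_{k\le N}$ of $B^L-B^K$ tends to $0$ because $\varphi^{K[1]}_{kj}(\pi)\to\varphi^{[1]}(\pi,\la_{kj})$ as $K\to\infty$ (continuous dependence of the Cauchy problem \eqref{eqsi} on $\sigma^K\to\sigma$ in $L_2$). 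Hence $\sup_G|B^L-B^K|\to0$.

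\emph{Conclusion and main obstacle.}
Assembling the three pieces in the displayed decomposition yields $\sup_G|r_2^L-r_2^K|\to0$, so $\{r_2^K\}$ is Cauchy for uniform convergence on compact subsets of $\mathbb C\setminus\{\la_{ni}\}$; since each $r_2^K$ is a polynomial of degree $\le M_1$ (Lemma~\ref{lem:degree}), the Cauchy property in fact holds uniformly on every compact subset of $\mathbb C$. The genuinely hard step is the quasi-derivative part $B^K$: securing a bound on $\varphi^{K[1]}_{kj}(\pi)$ that is simultaneously uniform in $K$, of the $O(k\cdot l_2)$ type, and with uniformly vanishing $l_2$-tails. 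Once that is in hand, the rest of the argument is a rerun of the estimates already established for $\sigma(x)$ and $r_1(\la)$.
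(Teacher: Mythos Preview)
Your proposal is correct and follows essentially the same route as the paper: the same algebraic decomposition of $r_2^L-r_2^K$ (the paper writes it as $\Pi_N\bigl((\Pi_K/\Pi_N-1)S_1^K+S^{K,N}\bigr)$, which is your identity up to relabeling), the same key step of applying Proposition~\ref{transform_varphi} to the approximating problems $L^K$ to represent $\varphi^{K[1]}_{kj}(\pi)$ via the kernels $\mathcal K_2^K(\pi,\cdot)$ and constants $\mathcal C^K(\pi)$, and the same use of $\sigma^K\to\sigma$ in $L_2$ (Lemma~\ref{coeffcon}) to conclude that these kernels form a Cauchy sequence in $L_2$. Your ``split at fixed $N$, uniform tails plus convergent head'' packaging of the $B^K$ estimate is a slight reorganization of the paper's direct use of the Cauchy property of $\{\mathcal K_2^K\}$, but the substance is identical.
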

\begin{proof}
Consider two elements $r_2^K(\la)$ and $r_2^N(\la)$ of the sequence $\{r_2^k(\la)\}$, where $K$ and $N$ are sufficiently large integer numbers. Without loss of generality we suppose that $K > N$.
Using \eqref{r2K}, consider the difference
$$
r_2^K(\la) - r_2^N(\la) = \Pi_N(\la) \bigg(\frac{\Pi_K(\la)}{\Pi_N(\la)}\bigg(\tilde r_2(\la)+\mathcal U_K^K(\la)\bigg) - \tilde r_2(\la) -\mathcal U_N^N(\la)\bigg).
$$

By Lemma~\ref{lem:sigma:r1}, $\Pi_N(\la) \to \Pi_{\infty}(\la)$ as $N \to \infty$ uniformly by $\la$ on compact sets excluding the eigenvalues. Hence $| \Pi_N(\la) | \le C$ for $\la$ on such a set.
Then, we get the estimate
\begin{equation} \label{estr2}
|r_2^K(\la) - r_2^N(\la)| \le C\bigg|\frac{\Pi_K(\la)}{\Pi_N(\la)} - 1\bigg||S_1^K(\la)| + C|S^{K,N}(\la)|,
\end{equation}
where
\begin{align*}
S_1^K(\la) = & \tilde r_2(\la)+\mathcal U_K^K(\la), \\
S^{K,N}(\la) = &\tilde r_1(\la)\sum\limits_{k=1}^N\sum\limits_{j=0}^1(-1)^j\alpha_{kj}\tilde\varphi_{kj}(\pi)\big(\varphi_{kj}^N(\pi) - \varphi_{kj}^K(\pi) \big) \\
& + \sum\limits_{k=1}^N\sum\limits_{j=0}^1 \frac{(-1)^j\alpha_{kj}\big(\varphi_{kj}^K(\pi) - \varphi_{kj}^N(\pi) \big)(\tilde r_1(\la)\tilde\varphi^{[1]}_{kj}(\pi) + \tilde r_2(\la)\tilde\varphi_{kj}(\pi))}{\la - \la_{kj}} + \mathcal U_K^K(\la) - \mathcal U_N^K(\la).
\end{align*}

By virtue of Lemma~\ref{lem:sol}, the functions $\varphi_{kj}^K(x)$ and $\varphi_{kj}^N(x)$ satisfy the equations analogous to \eqref{eqv} with the coefficients $\sigma^K(x)$ and $\sigma^N(x)$, respectively. Consequently, Proposition~\ref{transform_varphi} implies
\begin{gather}\label{varphiK}
	\varphi^K_{kj}(\pi) = \cos(\rho_{kj}\pi) + \int_0^{\pi}  \mathcal{K}^K_1(\pi, t) \cos(\rho_{kj} t) \, dt, \\ \label{der_varphiK}
	\varphi^{K[1]}_{kj}(\pi) = -\rho_{kj}\sin (\rho_{kj} \pi) + \rho\int_0^{\pi} \mathcal{K}^K_2(\pi, t) \sin(\rho_{kj} t) \, dt + \mathcal{C}^K(\pi),\\ \label{varphiN}
	\varphi^N_{kj}(\pi) = \cos(\rho_{kj}\pi) + \int_0^{\pi}  \mathcal{K}^N_1(\pi, t) \cos(\rho_{kj} t) \, dt, \\ \label{der_varphiN}
	\varphi^{N[1]}_{kj}(\pi) = -\rho_{kj}\sin(\rho_{kj} \pi) + \rho\int_0^{\pi} \mathcal{K}^N_2(\pi, t) \sin(\rho_{kj} t) \, dt + \mathcal{C}^N(\pi),
\end{gather}
where the functions $\mathcal K_j^K(\pi, t)$ and $\mathcal K_j^N(\pi, t)$ for $j = 1, 2$ belong to $L_2(0,\pi)$ and $\mathcal C^K(\pi)$, $\mathcal C^N(\pi)$ are constants. By virtue of Lemma~\ref{coeffcon}, $\sigma^K(x) \to \sigma(x)$ in $L_2(0,\pi)$ as $K \to \infty$. Consequently, one can show that $\mathcal K_j^K(\pi, t) \to \mathcal K_j(\pi, t)$ in $L_2(0,\pi)$ and $\mathcal C^K(\pi) \to \mathcal C(\pi)$ as $K \to \infty$. Hence, $\{ \mathcal K_j^K(\pi, t)\}$ and $\{ \mathcal C^K(\pi)\}$ are Cauchy sequences, so
$$
\lim_{K,N \to \infty} \| \mathcal K_j^K(\pi, t) - \mathcal K_j^N(\pi, t) \|_{L_2} = 0, \quad j = 1,2, \qquad \lim_{K,N \to \infty} |\mathcal C^K(\pi) - \mathcal C^N(\pi)| = 0.
$$

Therefore, using the representations \eqref{varphiK}--\eqref{der_varphiN} and Lemma~\ref{estimates}, it can be shown that 
\begin{equation} \label{SKN}
\lim_{K,N \to \infty} S^{K,N}(\la) = 0
\end{equation}
uniformly on compact sets excluding the eigenvalues.

Using \eqref{der_varphiK} and the estimate 4 from Lemma~\ref{estimates} analogously to the Lemma~\ref{lem:sigma:r1}, we can show that the series in $S_1^K(\la)$ converges absolutely and uniformly by $\la$ on compact sets excluding the eigenvalues. Hence $\| S_1^K(\la) \| \le C$.

Since the sequence $\{ \Pi_N(\la)\}$ converges, then it is a Cauchy sequence, so
$$
\lim\limits_{K, N \to \infty}\bigg|\frac{\Pi_K(\la)}{\Pi_N(\la)} - 1\bigg| = \lim\limits_{K, N \to \infty}\bigg|\frac{\Pi_K(\la) - \Pi_N(\la)}{\Pi_N(\la)}\bigg| = 0.
$$
Together with \eqref{estr2} and \eqref{SKN}, this yields 
$$
\lim_{K,N \to \infty} |r_2^K(\la) - r_2^N(\la) | = 0
$$
uniformly by $\la$ on compact sets excluding the eigenvalues. However, since $\{ r_2^K(\la) \}$ are polynomials, then the maximum modulus principle implies the uniform convergence on any compact sets. This concludes the proof.
\end{proof}

\begin{proof}[Proof of Theorem~\ref{stability_thm}]
Let $\tilde L$ be a boundary value problem $L(\tilde \sigma, \tilde r_1, \tilde r_2)$ with $\tilde \sigma \in L_2(0,\pi)$, $(\tilde r_1, \tilde r_2) \in \mathcal R$, and simple eigenvalues $\{ \tilde \la_n\}_{n \ge 1}$. Consider arbitrary data $\{ \la_n, \alpha_n\}_{n \ge 1}$ satisfying the estimate \eqref{estde} for sufficiently small $\delta_0 > 0$. Construct the main equation \eqref{main_equation}. By virtue of Lemma~\ref{main_equation_thm_solvability}, the main equation has the unique solution $\psi(x)$. 

Next, find the entries $\varphi_{ni}(x)$, $(n,i) \in V$, of the vector $T^{-1} \psi(x)$ and construct the functions $\sigma(x)$ and $r_1(\la)$ by the formulas \eqref{sigma_series} and \eqref{r1_series}, respectively. Lemmas~\ref{lem:conv} and~\ref{lem:sigma:r1} assert the convergence of the series for $\sigma(x)$ in $L_2(0,\pi)$ and for $r_1(\la)$ absolutely and uniformly on compact sets excluding the eigenvalues $\{ \la_{ni}\}_{(n,i) \in V}$. Moreover, these lemmas imply the estimates $\| \sigma(x) - \tilde \sigma(x) \|_{L_2} \le C \delta$ and $| r_1(\la) - \tilde r_1(\la) | \le C \delta$ for $\la$ on compact sets. 

In order to deal with $r_2(\la)$, we use the approximation approach. For sufficiently large $K$, we consider the ``truncated'' spectral data $\{ \la_n^K, \alpha_n^K\}_{n \ge 1}$ defined by \eqref{trunc}. Then, by using these data, we construct the finite approximations $\sigma^K(x)$, $r_1^K(\la)$, and $r_2^K(\la)$ by the formulas \eqref{sigmaK}, \eqref{r1K}, and \eqref{r2K}, respectively. Lemma~\ref{lem:degree} asserts that $(r_1^K, r_2^K) \in \mathcal R$. Consider the boundary value problem $L^K = L(\sigma^K, r_1^K, r_2^K)$. By Lemma~\ref{lem:sol}, $\{ \la_n^K, \alpha_n^K\}_{n \ge 1}$ are the spectral data of $L^K$.

Next, we pass to the limit as $K \to \infty$. Lemma~\ref{coeffcon} implies that $\sigma^K(x) \to \sigma(x)$ in $L_2(0,\pi)$ and $r_1^K(\la) \to r_1(\la)$ on compact sets. By Lemma~\ref{lem:fundamental}, $\{ r_2^K(\la)\}_{K \ge 1}$ is a Cauchy sequence. Hence, this sequence converges to some $r_2^*(\la)$ uniformly on compact sets. Since $(r_1^K, r_2^K) \in \mathcal R$ for all sufficiently large $K$, the uniform convergence implies that $r_1(\la)$ and $r_2^*(\la)$ are also polynomials and $(r_1, r_2^*) \in \mathcal R$. Consider the boundary value problem $L := L(\sigma, r_1, r_2^*)$. Since $\sigma^K \to \sigma$, $r_1^K \to r_1$, $r_2^K \to r_2$ in a suitable sense, then Lemma~5.9 from \cite{ChitBond} implies that the spectral data of $L^K$ converge to the spectral data of $L$. In view of \eqref{trunc}, we have $\la_n^K \to \la_n$ and $\alpha_n^K \to \alpha_n$ as $K \to \infty$ for each fixed $n$. Hence, $\{ \la_n, \alpha_n \}_{n \ge 1}$ are the spectral data of $L$. 

The estimates \eqref{stab} for $\sigma(x)$ and $r_1(\la)$ are already proved. It remains to obtain the estimate $|r_2(\la) - \tilde r_2(\la)| \le C \delta$ for $\la$ on compact sets. For this purpose, we use the formula \eqref{r2_series}. Now, we know that the entries $\varphi_{ni}(x)$, which are obtained from the solution of the main equation, satisfy \eqref{eqv} with $\la = \la_{ni}$, so Proposition~\ref{transform_varphi} holds for $\varphi_{ni}(\pi)$ and $\varphi^{[1]}_{ni}(\pi)$. Using Proposition~\ref{transform_varphi} and following the proofs of Lemmas~\ref{lem:sigma:r1} and \ref{coeffcon}, one can show that the series in \eqref{r2_series} converge absolutely and uniformly on compact sets excluding eigenvalues, $|r_2(\la) - \tilde r_2(\la)| \le C \delta$ on compact sets, and $r_2^K(\la) \to r_2(\la)$ as $K \to \infty$. The latter convergence implies $r_2(\la) = r_2^*(\la)$ and so completes the proof.
\end{proof}

\begin{remark}
The absolute and uniform convergence of the polynomials $\{ r_1^K(\la)\}$ and $\{ r_2^K(\la)\}$ on compact sets implies the convergence for the corresponding coefficients:
\begin{align*}
r_1^K(\la) = \la^{M_1} + \sum_{n = 0}^{M_1-1} c_n^K \la^n, \quad
& r_2^K(\la) = \sum_{n = 0}^{M_1} d_n^K \la^n, \\
\lim_{K \to \infty} c_n^K = c_n, \quad & \lim_{K \to \infty} d_n^K = d_n.
\end{align*}
\end{remark}

\section{Multiple eigenvalues} \label{sec:mult}

In general, the eigenvalues of the boundary value problem \eqref{eqv}-\eqref{bc} are not necessarily simple. Due to the asymptotics \eqref{la_asymp}, the number of multiple eigenvalues is finite.
In this section, we consider a special case, when multiple eigenvalues are possible but they are not perturbed. Theorems~\ref{stability_thm} and \ref{mainthm} can be extended to this case with minor technical modification. However, the general case of splitting for multiple eigenvalues requires a separate investigation. 

Let $m_k$ be the multiplicity of $\la_k$. Put $I := \{1\}\cup\{n > 1 \colon \la_{n-1} \neq \la_n\}$. Then, the Weyl function admits the following representation (see \cite{ChitBond}):
\begin{gather}\label{weyl_sum}
	M(\la) = \sum_{k \in I}^{} \sum_{j=0}^{m_k - 1} \frac{\alpha_{k+j}}{(\la - \la_k)^{j+1}},
\end{gather}
where $\alpha_{k+j}$ are the coefficients of the principal part of the Weyl function $M(\la)$ in a neighbourhood of the pole $\la_k$. Thus, the spectral data of the problem $L$ are denoted as $\{ \la_n, \alpha_n\}_{n \ge 1}$ as before and the statement of Inverse Problem~\ref{ip:main3} remains the same.

Let $L = L(\sigma, r_1, r_2)$ and $\tilde L = L(\tilde \sigma, \tilde r_1, \tilde r_2)$ be two boundary value problems such that
$$
\la_n = \tilde \la_n, \quad \alpha_n = \tilde \alpha_n, \quad n \le N,
$$
where $N$ is such an index that all the eigenvalues $\la_n$ and $\tilde \la_n$ for $n > N$ are simple.
In this case, we get the following reconstruction formulas:
\begin{align} \label{sigma_series_m}
\sigma(x) = \tilde \sigma(x) - 2 \sum_{k=N+1}^\infty \sum_{j=0}^1 {(-1)^j\alpha_{kj}\Bigg(\tilde \varphi_{kj}(x)\varphi_{kj}(x) - \frac{1}{2}\Bigg)},
\end{align}
\begin{align} \label{r1_series_m}
r_1(\la) = \prod_{k = N+1}^{\infty} \frac{\la - \la_{k0}}{\la - \la_{k1}}\Bigg(\tilde r_1(\la) - \sum_{k=N+1}^\infty \sum_{j=0}^1 \frac{(-1)^j\alpha_{kj}\varphi_{kj}(\pi)(\tilde r_1(\la)\tilde\varphi^{[1]}_{kj}(\pi) + \tilde r_2(\la)\tilde\varphi_{kj}(\pi))}{\la - \la_{kj}} \Bigg),
\end{align}
\begin{align}\notag 
r_2(\la) &= \prod_{k = N+1}^{\infty} \frac{\la - \la_{k0}}{\la - \la_{k1}}\Bigg(\tilde r_2(\la) - \tilde r_1(\la) \sum_{k=N+1}^\infty \sum_{j=0}^1 {(-1)^j\alpha_{kj}\Bigg(\tilde \varphi_{kj}(\pi)\varphi_{kj}(\pi) - 1\Bigg)} \\ \label{r2_series_m}
&+ \sum_{k = N+1}^{\infty} \sum_{j=0}^1 \frac{(-1)^j\alpha_{kj}\varphi^{[1]}_{kj}(\pi)(\tilde r_1(\la)\tilde\varphi^{[1]}_{kj}(\pi) + \tilde r_2(\la)\tilde\varphi_{kj}(\pi))}{\la - \la_{kj}} \Bigg),
\end{align}
where the series in \eqref{sigma_series_m} converges in $L_2(0,\pi)$ and the series and the infinite products in \eqref{r1_series_m} and \eqref{r2_series_m} converge absolutely and uniformly on compact sets in $\mathbb C \setminus \{ \la_{ni}\}_{n \ge N+1, \, i = 0, 1}$.

Furthermore, the following theorem on local solvability and stability is valid.

\begin{thm}\label{stability_thm_m}
	Let $\tilde L = L(\tilde \sigma, \tilde r_1, \tilde r_2)$ be a fixed boundary value problem of form \eqref{eqv}-\eqref{bc} with $\tilde \sigma \in L_2(0,\pi)$ and $(\tilde r_1, \tilde r_2) \in \mathcal R$. Let the eigenvalues $\tilde \la_n$ with numbers $n > N$ be simple, and for $n \le N$ they can be multiple.
	Then, there exists $\delta_0 > 0$, depending on problem $\tilde L$, such that, for any complex numbers $\{\la_n, \alpha_n\}_{n \ge 1}$ satisfying the conditions
	\begin{gather*}
	\la_n = \tilde \la_n, \quad \alpha_n = \tilde \alpha_n, \quad n \le N, \\
	\delta := \bigg(\sum\limits_{n=N+1}^\infty(|\tilde \rho_n - \rho_n|+|\tilde\alpha_n - \alpha_n|)^2\bigg)^{\frac{1}{2}} \le \delta_0,
	\end{gather*}
	there exist a complex-valued function $\sigma(x) \in L_2(0, \pi)$ and polynomials $(r_1, r_2)\in\mathcal R$, which are the solution of Inverse Problem~\ref{ip:main3} for the data $\{\la_n, \alpha_n\}_{n \ge 1}$. Moreover,
	\begin{equation*} 
	\|\sigma - \tilde\sigma\|_{L_2} \le C\delta, \quad |r_1(\la) - \tilde r_1(\la)| \le C\delta, \quad |r_2(\la) - \tilde r_2(\la)| \le C\delta
	\end{equation*}
	for $\la$ on compact sets, where the constant $C$ depends only on $\tilde L$, $\delta_0$, and on a compact set in the last two formulas.
\end{thm}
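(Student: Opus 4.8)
The plan is to follow the scheme of the proof of Theorem~\ref{stability_thm} --- passing through the main equation \eqref{main_equation}, the solution estimates, the reconstruction formulas \eqref{sigma_series_m}--\eqref{r2_series_m}, and the approximation argument --- but with all infinite index ranges shifted so that the frozen block $n \le N$ plays no active role. Since $\la_n = \tilde \la_n$ and $\alpha_n = \tilde \alpha_n$ for $n \le N$, we have $\delta_n = 0$ and $\chi_n = 0$ for these $n$; consequently the rows and columns of $\tilde H(x)$ carrying such indices vanish, and the main equation \eqref{main_equation} reduces to an equation in the Banach space of sequences indexed by $(n,i)$ with $n > N$, $i = 0,1$. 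First I would rebuild $\tilde H(x)$ on this smaller space and check that Lemma~\ref{main_equation_thm_solvability} carries over verbatim: the bound $\|\tilde H(x)\|_{m \to m} \le C\delta$ uses only $\{\delta_n\}_{n > N} \in l_2$ with small norm, so for $\delta_0$ small enough the operator $E + \tilde H(x)$ is boundedly invertible and the main equation has a unique solution $\psi(x)$, for which the analogue of Lemma~\ref{estimates} holds.

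Next I would construct $\sigma(x)$ and $r_1(\la)$ from the entries $\varphi_{ni}(x)$ of $T^{-1}\psi(x)$ by \eqref{sigma_series_m} and \eqref{r1_series_m}, and establish their convergence together with the bounds $\|\sigma - \tilde\sigma\|_{L_2} \le C\delta$ and $|r_1(\la) - \tilde r_1(\la)| \le C\delta$ exactly as in Lemmas~\ref{lem:conv} and~\ref{lem:sigma:r1}, the only difference being that the series and the infinite product now start at $k = N+1$. For $r_2(\la)$ I would again use the approximation approach: form the truncated data $\{\la_n^K, \alpha_n^K\}_{n \ge 1}$ by the analogue of \eqref{trunc} (which leaves the frozen block $n \le N$ untouched, since there $\la_n = \tilde\la_n$, $\alpha_n = \tilde\alpha_n$), build the finite approximations $\sigma^K$, $r_1^K$, $r_2^K$ via the analogues of \eqref{sigmaK}--\eqref{r2K}, and run the analogues of Lemmas~\ref{lem:degree}, \ref{lem:sol}, \ref{coeffcon}, and~\ref{lem:fundamental}. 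In the degree count, the extra poles and zeros carried at the frozen multiple eigenvalues by $\tilde r_1(\la)$ and by the infinite product coincide for $L$ and $\tilde L$, so they cancel automatically and $(r_1^K, r_2^K) \in \mathcal R$ as before.

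Passing to the limit $K \to \infty$ then gives $\sigma^K \to \sigma$ in $L_2(0,\pi)$, $r_1^K \to r_1$, and $r_2^K \to r_2^*$ uniformly on compact sets, whence $(r_1, r_2^*) \in \mathcal R$. Invoking the version of Lemma~5.9 from \cite{ChitBond} valid for the spectral data defined through the representation \eqref{weyl_sum}, the spectral data of $L^K$ converge to those of $L := L(\sigma, r_1, r_2^*)$; since $\la_n^K \to \la_n$ and $\alpha_n^K \to \alpha_n$ for each fixed $n$, these are precisely the spectral data of $L$. The remaining bound $|r_2(\la) - \tilde r_2(\la)| \le C\delta$ and the identity $r_2 = r_2^*$ follow from \eqref{r2_series_m}, as in the proof of Theorem~\ref{stability_thm}.

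I expect the main obstacle to be verifying that the preliminary identities of Section~\ref{sec:prelim} and the reconstruction-formula derivation of Section~\ref{sec:rec} survive the presence of multiple eigenvalues in the model problem $\tilde L$, since Lemmas~\ref{lemforz} and~\ref{lem:cz} were proved under the standing assumption that all eigenvalues are simple. The point to check is that every potentially singular quantity --- terms with denominator $\la - \la_{kj}$, the kernels $\tilde D$ and $\tilde E$ at coincident arguments, or the residue computation for $M(\la)$ --- involves only indices $k > N$, where the eigenvalues of both $\tilde L$ and $L$ are simple; for $L$ this holds because, for $\delta$ small, the constructed $\la_n$ with $n > N$ remain close to the distinct simple $\tilde\la_n$. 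Thus all delicate steps stay inside the simple-spectrum regime, while the block $n \le N$ contributes only matching, cancelling factors. It is this bookkeeping, rather than any new analytic estimate, that constitutes the crux of the argument.
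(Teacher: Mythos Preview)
Your proposal is correct and follows essentially the approach the paper itself indicates: the paper does not give a separate proof of Theorem~\ref{stability_thm_m} but merely states that Theorems~\ref{stability_thm} and~\ref{mainthm} ``can be extended to this case with minor technical modification,'' and your plan spells out exactly those modifications --- shifting all sums and products to start at $k=N+1$ and observing that the frozen block $n\le N$ drops out of the main equation and the reconstruction formulas. One small imprecision: it is the \emph{columns} of $\tilde H(x)$ with $k\le N$ that vanish (because $\la_{k0}=\la_{k1}$ and $\alpha_{k0}=\alpha_{k1}$ force $\tilde Q_{ni;k0}=\tilde Q_{ni;k1}$, so $\tilde Q_{n,k}T_k^{-1}=0$), whereas for the rows only the $(n,0)$-component vanishes; but since vanishing columns already make the subsystem for $n>N$ closed, this does not affect your argument.
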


\medskip

\textbf{Funding}: This work was supported by Grant 21-71-10001 of the Russian Science Foundation, https://rscf.ru/en/project/21-71-10001/.

\medskip

\textbf{Competing interests}: The paper has no conflict of interests.

\medskip

\noindent Egor Evgenevich Chitorkin \\
1. Institute of IT and Cybernetics, Samara National Research University, \\
Moskovskoye Shosse 34, Samara 443086, Russia, \\
2. Department of Mechanics and Mathematics, Saratov State University, \\
Astrakhanskaya 83, Saratov 410012, Russia, \\
e-mail: {\it chitorkin.ee@ssau.ru} \\

\noindent Natalia Pavlovna Bondarenko \\
1. Department of Mechanics and Mathematics, Saratov State University, \\
Astrakhanskaya 83, Saratov 410012, Russia, \\
2. Department of Applied Mathematics and Physics, Samara National Research University, \\
Moskovskoye Shosse 34, Samara 443086, Russia, \\
3. Peoples' Friendship University of Russia (RUDN University), \\
6 Miklukho-Maklaya Street, Moscow, 117198, Russia, \\
e-mail: {\it bondarenkonp@info.sgu.ru}

\end{document}